\documentclass[10pt, a4paper]{amsart}
\usepackage[T1]{fontenc}
\usepackage{geometry}
\usepackage[dvipsnames]{xcolor}
\usepackage[hidelinks,colorlinks=true,citecolor=ForestGreen,linkcolor=RubineRed]{hyperref}
\usepackage[timezonesep={\ UTC}, showseconds=false]{datetime2}
\usepackage{bookmark}

\usepackage{amsmath}
\usepackage{amssymb}
\usepackage{amsfonts}
\usepackage{mathtools}
\usepackage{amsthm}
\usepackage{braket}
\usepackage{comment}
\usepackage{mathrsfs}
\usepackage{dsfont}
\usepackage{bbm}
\usepackage{stmaryrd}
\usepackage{tensor}
\usepackage{tikz-cd}
\usepackage{extarrows}
\usepackage{manfnt}

\newcommand{\Bcrys}{\mathbb{B}_\mathrm{cris}}
\DeclareFontFamily{U}{wncy}{}
\DeclareFontShape{U}{wncy}{m}{n}{<->wncyr10}{}
\DeclareSymbolFont{mcy}{U}{wncy}{m}{n}
\DeclareMathSymbol{\Sha}{\mathord}{mcy}{"58}

\newcommand{\cA}{\mathcal{A}}
\numberwithin{equation}{section}
\newtheorem{thm}{Theorem}[section]
\newtheorem{mainThm}{Theorem}

\newtheorem{prop}[thm]{Proposition}
\newtheorem{lem}[thm]{Lemma}
\newtheorem{cor}[thm]{Corollary}

\theoremstyle{remark}
\newtheorem{rem}[thm]{Remark}
\theoremstyle{definition}
\newtheorem{de}[thm]{Definition}


\newcommand{\numberset}{\mathbf}

\newcommand{\Z}{\numberset{Z}}
\newcommand{\Q}{\numberset{Q}}

\newcommand{\C}{\numberset{C}}

\newcommand{\Zp}{{\Z_p}}
\newcommand{\Qp}{{\Q_p}}
\newcommand{\Qpbar}{\overline{\Q}_p}
\newcommand{\Qbar}{\overline{\Q}}

\newcommand{\1}{\mathbf{1}}

\newcommand{\HIw}{H^1_\mathrm{Iw}}

\newcommand{\Dcris}{\mathbb{D}_\mathrm{cris}}

\newcommand{\cyc}{\mathrm{cyc}}

\DeclareMathOperator{\Hom}{Hom}

\DeclareMathOperator{\Gl}{GL}
\DeclareMathOperator{\gal}{Gal}
\DeclareMathOperator{\Gal}{Gal}

\DeclareMathOperator{\Fil}{Fil}


\DeclareMathOperator{\rank}{rank}
\DeclareMathOperator{\corank}{corank}
\DeclareMathOperator{\coker}{coker}
\DeclareMathOperator{\Char}{char}

\newcommand{\im}{\mathrm{im}}

\newcommand{\inj}{\hookrightarrow}
\newcommand{\surj}{\twoheadrightarrow}

\renewcommand{\injlim}{\varinjlim}
\renewcommand{\projlim}{\varprojlim}
\newcommand{\plim}{\projlim}
\newcommand{\ilim}{\injlim}


\DeclareMathOperator{\tr}{Tr}

\newcommand{\Sel}{\mathrm{Sel}}
\newcommand{\hone}{H^1}
\newcommand{\fcond}{\mathbf{f}}
\newcommand{\scond}{\mathbf{s}}
\newcommand{\honef}{\hone_\fcond}
\newcommand{\hones}{\hone_\scond}

\newcommand{\ur}{\mathrm{ur}}

\newcommand{\Col}{\mathrm{Col}}

\DeclareMathOperator{\res}{Res}
\DeclareMathOperator{\cores}{Cores}

\newcommand{\ord}{\mathrm{ord}}

\newcommand{\cO}{\mathcal{O}}

\newcommand{\cH}{\mathcal{H}}
\newcommand{\cN}{\mathcal{N}}

\newcommand{\sH}{\mathscr{H}}

\newcommand{\cL}{\mathcal{L}}
\newcommand{\cK}{\mathcal{K}}
\newcommand{\cX}{\mathcal{X}}
\newcommand{\bz}{\mathbf{z}}

\newcommand{\BDP}{\mathrm{BDP}}
\newcommand{\BK}{\mathrm{BK}}
\renewcommand{\div}{\mathrm{div}}

\newcommand{\AJ}{\mathrm{AJ}}

\DeclareMathOperator{\chow}{CH}

\newcommand{\col}[2]{\left[\begin{matrix}
		#1\\ #2
	\end{matrix}\right]}

\newcommand{\mat}[4]{\left[\begin{matrix}
		#1 & #2\\
		#3 & #4\\
	\end{matrix}\right]}

\newcommand{\ann}{\mathrm{Ann}}
\newcommand{\barv}{{\bar{v}}}
\newcommand{\commentold}[1]{}
\newcommand{\loc}{\mathrm{loc}}

\newcommand{\fCol}{\mathfrak{Col}}
\newcommand{\Frob}{\mathrm{Frob}}
\newcommand{\fF}{\mathfrak{F}}
\newcommand{\fo}{\mathfrak{o}}
\newcommand{\bff}{\mathbf{f}}
\newcommand{\cris}{\mathrm{cris}}

\newcommand{\squarefree}{\mathscr{K}}
\newcommand{\cZ}{\mathcal{Z}}
\newcommand{\cM}{\mathcal{M}}

\title[Selmer groups over anticyclotomic towers]{On the structure of the Bloch--Kato Selmer groups of modular forms over anticyclotomic $\mathbf{Z}_p$-towers}

\author[A.~Lei]{Antonio Lei}
\address[Lei]{Department of Mathematics and Statistics\\University of Ottawa\\
150 Louis-Pasteur Pvt\\
Ottawa, ON\\
Canada K1N 6N5}
\email{antonio.lei@uottawa.ca}

\author[L.~Mastella]{Luca Mastella}
\address[Mastella]{Faculty of Mathematics and Computer Science\\
UniDistance Suisse\\Schinerstrasse 18\\Brig\\3900\\Switzerland}
\email{luca.mastella@unidistance.ch}

\author[L.~Zhao]{Luochen Zhao}
\address[Zhao]{Morningside Center of Mathematics\\No.55 Zhongguancun East Road\\Beijing\\100190\\China}
\email{luochenzhao@amss.ac.cn}


\keywords{Anticyclotomic extensions, growth of Selmer groups, vanishing of BDP Selmer groups and Shafarevich--Tate groups}
\subjclass[2020]{11R23, 11F11 (primary); 11R20 (secondary).}

\usepackage[backend=biber, style=alphabetic, doi=false, url=false, isbn=false]{biblatex}
\addbibresource{references.bib}

\begin{document}

\begin{abstract}
    Let $p$ be an odd prime number and let $K$ be an imaginary quadratic field in which $p$ is split. Let $f$ be a modular form with good reduction at $p$. We study the variation of the Bloch--Kato Selmer groups and the Bloch--Kato--Shafarevich--Tate groups of $f$ over the anticyclotomic $\mathbf{Z}_p$-extension $K_\infty$ of $K$. In particular, we show that under the generalized Heegner hypothesis, if the localization of the generalized Heegner cycle attached to $f$ at one of the primes above $p$ is primitive and certain local conditions hold, then the Pontryagin dual of the Selmer group of $f$ over $K_\infty$ is free over the Iwasawa algebra. Consequently, the Bloch--Kato--Shafarevich--Tate groups of $f$ vanish. This generalizes earlier works of Matar and Matar--Nekov\'a\v{r} on elliptic curves. Furthermore, our proof applies uniformly to the ordinary and non-ordinary settings.
\end{abstract}

\maketitle

\section{Introduction}

\subsection{Setting and notation}\label{sec:setting}

Fix once and for all a prime number $p > 2$, an algebraic closure $\Qbar$ of $\Q$ and $\Qpbar$ of $\Qp$, and an embedding $\iota_p \colon \Qbar \inj \Qpbar$.
Let $N \ge 1$ be an integer and $f = \sum_{n>0} a_n(f)q^n$ be a  normalized cuspidal eigen-newform of even weight $k \ge 2$ and level $\Gamma_0(N)$, and let $\fF = \Q_p\left(a_n(f):{n >0}\right)$ be the finite extension of $\Q_p$ generated by the Fourier coefficients of $f$ under the embedding $\iota_p$; denote by $\fo$ the ring of integers of $\fF$ and fix a uniformizer $\varpi$ of $\fo$.
Deligne \cite{deligne:formes-mod-reps-l-adiques} attached to $f$ a $p$-adic representation 
\[
    \rho_{f} \colon G_\Q \longrightarrow \Gl_2(\fF)
\]
that is unramified outside of $ pN$. Denote by $V_f$ the underlying vector space, and let $V= V_f(k/2)$ be its central critical twist. When $p \nmid 2(k-2)!N\varphi(N)$, where $\varphi(N)$ is Euler's totient function, Nekov\'a\v{r}  \cite[\S3]{nekovar92} constructed a lattice $T\subset V$ that is endowed with a $G_\Q$-equivariant skew-symmetric perfect pairing
\[
T \times T \to \fo(1),
\]
making $T$, and hence $V$, self-dual. That is, $T \simeq \Hom_{\fo}(T,\fo(1))$ and $V \simeq \Hom_{\fF}(V,\fF(1))$; \textit{cf.}, \cite[(14.10.1)]{kato:p-adic-hodge-theory-zeta-functions-mod-forms}. We put $A = V/T$. 

Let $K$ be an imaginary quadratic field of discriminant $d_K \ne -3,-4$ coprime to $Np$. Write $N$ as the product $N=N^+N^-$ with $N^+$ (resp.~$N^-$) divisible only by primes that split (resp. remain inert) in $K$. We assume that $(K,N)$ satisfies the \emph{generalized Heegner hypothesis}:
\begin{align}\tag{Heeg.}\label{ass:genHeeg}
    N^- \text{ is a squarefree product of an even number of primes}.
\end{align}

Next, let $K_\infty/K$ be the anticyclotomic $\Z_p$-extension of $K$, and $K_n$ be the intermediate extension with $\gal(K_n/K)\simeq \Z/p^n\Z$. We will assume $p\nmid h_K$, so that both places above $p$ in $K$ are totally ramified in $K_\infty/K$. By an abuse of notation, we will denote by $v$ and $\barv$ the two places above $p$ in $K_n$ for $n\in \Z_{\ge 0}\cup \{\infty\}$. Put $\Gamma = \Gal(K_\infty/K)$ and $\Lambda = \fo[[\Gamma]]$.

Finally, if $M$ is a $\Zp$-module, we denote by $M^\vee$ its Pontryagin dual $\Hom(M,\Q_p/\Z_p)$, and by $M_\div$ its maximal divisible subgroup.

\subsection{Background}
    In this paper, we are interested in the anticyclotomic Iwasawa theory of Selmer groups attached to the form $f$, assuming  local primitivity at $v$ of the associated generalized Heegner class. We shall begin with a brief historical account which will provide some context to our results to be stated below. We invite the intrigued reader to consult \cite[\S0]{matar-nekovar} for a much more detailed overview.
	
	Our study of the Selmer groups is deeply rooted in Kolyvagin's original breakthrough \cite{kolyvagin90}, where, under some assumptions, a bound of the Shafarevich--Tate group over $K$ is given when $f$ corresponds to a rational non-CM elliptic curve $E$ for which the Heegner point $y_K\in E(K)$ is nontorsion. Gross \cite{gross-durham} subsequently gave a self-contained proof in the simplified setting when $y_K\notin pE(K)$, in which case the $p$-part of the Shafarevich--Tate group of $E$ over $K$ is trivial, and hence $\Sel_{p^\infty}(E/K)$ coincides with $E(K)\otimes \Q_p/\Z_p$, generated by $y_K$.
 
 Assuming furthermore that $E$ has good ordinary reduction at $p$, the work of Matar--Nekov\'a\v{r} \cite{matar-nekovar}, among other things, put the result of Kolyvagin--Gross in the Iwasawa theoretic context, and proved the vanishing of the $p$-primary Shafarevich--Tate groups of $E$ in the anticyclotomic $\Zp$-tower. Furthermore, they studied the structure of $\Sel_{p^\infty}(E/K_n)$ and showed that the Pontryagin dual of $\Sel_{p^\infty}(E/K_\infty)$ is a free $\Lambda$-module of rank one. When $E$ has good supersingular reduction at $p$, building on the work of Longo--Vigni \cite{longo-vigni-plus-minus} and the plus/minus theory of Kobayashi \cite{kobayashi03}, Matar \cite{matar-supersingular,matar:plus-minus} extended the results of \cite{matar-nekovar} to show that the $p$-primary Shafarevich--Tate groups of $E$ in the anticyclotomic $\Zp$-tower are once again trivial, whereas the Pontryagin dual of $\Sel_{p^\infty}(E/K_\infty)$ is a free $\Lambda$-module of rank two.
	
	Recently, the second-named author \cite{mastella23} established the analogous result of Kolyvagin--Gross in the higher weight case, with the Heegner point replaced by the generalized Heegner class $z_{f,K}\in H^1(K,T)$ (the exact definition is recalled in \S\ref{sec:heegner}). In this vein, our goal here is to study the variation of Selmer groups of higher weight modular forms in the anticyclotomic tower.

\subsection{Main result}

We now state our main theorem, which corresponds to Proposition \ref{prop:bdp}, Corollary \ref{cor:bdp-sha-trivial} and Theorem \ref{thm:lambda-ranks} in the main body of the article. Below, for a number field $L$, $\Sel(L,A)$ denotes the usual Bloch--Kato Selmer group.
\begin{mainThm}\label{mainthm}
    Let $f$ be a normalized cuspidal eigen-newform of even weight $k$ and level $\Gamma_0(N)$. Let $K$ be an imaginary quadratic field with coprime-to-$Np$ discriminant $d_K\ne -3,-4$ and satisfying \eqref{ass:genHeeg}. Assume moreover $N\ge 5$ if $N^-=1$, and $N^+> 3$ and $k\ge4$ if $N^-\ne 1$. Let $p$ be a prime such that:
    \begin{enumerate}
		\item[(i)] $p$ splits in $K$, and does not divide the class number $h_K$;
		\item[(ii)] $p\nmid 2(k-2)!N\varphi(N)$;
		\item[(iii)] if $f$ has weight 2 and is $p$-ordinary, then $a_p\not\equiv 1\pmod \varpi$;
		\item[(iv)] for any $w\mid N$ in $K$ and any $w'\mid w$ in $K_\infty$, 
       the group $A^{G_{K_{\infty,w'}}}$ is divisible;
		\item[(v)] the class $z_{f, K}$ is locally primitive at $v$, i.e., its restriction $\loc_v(z_{f,K})\in H^1_\bff(K_v,T)$ is not in $\varpi H^1_\bff(K_v,T)$;
        \item[(vi)] $\Sel(K,A)= \fF/\fo\cdot z_{f,K}$.
	\end{enumerate}
	Then we have:
	\begin{enumerate}
		\item for $n\in \Z_{\ge 0}\cup \{\infty\}$, the relaxed-strict (or BDP) Selmer group $\Sel^{\emptyset,0}(K_n, A)$ is trivial;
		\item for $n\in \Z_{\ge 0}$, $\Sel(K_n,A)\simeq(\fF/\fo)^{\oplus p^n}$;
		\item the Pontryagin dual $\Sel(K_\infty,A)^\vee$ is a free $\Lambda$-module of rank one (resp.~two) if $f$ is ordinary (resp.~non-ordinary) at $p$.
	\end{enumerate}
\end{mainThm}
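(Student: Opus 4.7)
The plan is to prove the three parts in the order (1), (3), (2), since the vanishing of the BDP Selmer group is the key input, which through global duality pins down the structure of the Bloch--Kato Selmer group over $K_\infty$, and then part (2) follows by a control theorem. The conceptual input on top of the Euler system machinery is the observation that the hypotheses (v) and (vi) play exactly the role played by $y_K\notin pE(K)$ in Gross's simplified formulation of Kolyvagin's method, now lifted to the Iwasawa tower.

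For part (1), the first step is to assemble the norm-compatible tower $\{z_{f,K_n}\}_n$ into an Iwasawa class $\bfz_{f,K_\infty}\in H^1_\Iw(K_\infty,T)$, and to show that the primitivity assumption (v) on $\loc_v(z_{f,K})$ propagates to primitivity of $\loc_v(\bfz_{f,K_\infty})$ as an element of the appropriate finite-slope local Iwasawa cohomology (this is where hypothesis (iii) is used to avoid a trivial-zero phenomenon for weight-2 ordinary $f$). With this in hand, the proof of $\Sel^{\emptyset,0}(K_\infty,A)=0$ follows by running Kolyvagin's method with derivative classes formed from generalized Heegner cycles at Kolyvagin primes, essentially as in the second-named author's work \cite{mastella23} but in the Iwasawa setting, following the template of Matar--Nekov\'a\v{r}. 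Descent from $n=\infty$ to finite $n$ uses a control theorem for the BDP Selmer group; the kernel/cokernel of the restriction map at bad primes is controlled by (iv), and at $p$ is controlled by (i) and the fact that $p$ splits in $K$.

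For part (3), set $X=\Sel(K_\infty,A)^\vee$. Poitou--Tate duality (together with part (1)) yields a four-term exact sequence
\[
0 \longrightarrow H^1_{\bff}(K_{\infty,v},T)/\langle \loc_v(\bfz_{f,K_\infty})\rangle \longrightarrow X \longrightarrow \Sel^{\emptyset,0}(K_\infty,A)^\vee \longrightarrow 0,
\]
the rightmost term being trivial by part (1). The local term $H^1_{\bff}(K_{\infty,v},T)$ is, by either the ordinary Coleman map or, in the non-ordinary case, Wach/Kobayashi--Sprung plus--minus theory, a free $\Lambda$-module of rank one (ordinary) or two (non-ordinary). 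The primitivity of $\loc_v(\bfz_{f,K_\infty})$ guarantees that the quotient by $\langle\loc_v(\bfz_{f,K_\infty})\rangle$ has no $\Lambda$-torsion and no pseudo-null submodule in the ordinary case, and remains free of rank two in the non-ordinary case (where no Heegner class is killed). Hence $X$ is free of the announced rank.

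Finally, part (2) is an immediate corollary: using the freeness from part (3) and the vanishing of $X^\Gamma_{\text{tors}}$, a standard Mazur-style control theorem (again relying on (iv) at primes dividing $N$ and on the splitting of $p$ in $K$) identifies $\Sel(K_n,A)^\vee$ with $X/(\gamma^{p^n}-1)X$, whose $\fo$-rank is $p^n$ (resp.~$2p^n$) in the ordinary (resp.~non-ordinary) case, yielding the claimed divisible structure. The principal technical obstacle is the setup of the Iwasawa-theoretic Kolyvagin argument uniformly in the ordinary and non-ordinary regimes, since the standard references treat these cases separately; the clean way is to formulate the Kolyvagin bound in terms of the primitivity of $\loc_v(\bfz_{f,K_\infty})$, which makes no reference to the reduction type and absorbs the dichotomy into the local structure theorem for $H^1_{\bff}(K_{\infty,v},T)$ invoked in part (3).
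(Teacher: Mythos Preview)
Your proposal has two genuine gaps, both concentrated in the non-ordinary case.

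First, your deduction of (2) from (3) via a Mazur-style control theorem cannot work when $f$ is non-ordinary at $p$: as you yourself compute, it would give $\corank_\fo\Sel(K_n,A)=2p^n$, contradicting the statement you are trying to prove. The underlying issue is that the Bloch--Kato local condition at $p$ does \emph{not} satisfy exact control along the tower in the non-ordinary case; this is precisely why signed or BDP Selmer groups are introduced as surrogates. In the paper the logical order is reversed: part (2) is obtained \emph{before} (3), directly from (1). Once $\Sel^{\emptyset,0}(K_n,A)=0$ (and, by symmetry, $\Sel^{0,\emptyset}(K_n,A)=0$), a short Cassels--Poitou--Tate sequence forces the localization $\Sel(K_n,A)\to H^1_\bff(K_{n,w},A)$ to be an isomorphism for each $w\mid p$, and the right-hand side has $\fo$-corank $p^n$ by Bloch--Kato, independently of the reduction type.

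Second, the exact sequence you write for (3) is not correct. In the non-ordinary case one has $\varprojlim_n H^1_\bff(K_{n,v},T)=0$ (a Perrin-Riou universal-norm computation), not a free $\Lambda$-module of rank two; so the module you propose to quotient cannot produce a rank-two $X$. What the paper does instead is to use the isomorphism $\Sel(K_\infty,A)\simeq\varinjlim_n H^1_\bff(K_{n,w},A)$ coming from (2) and local Tate duality to identify $X$ with $\varprojlim_n H^1(K_{n,w},T)\big/\varprojlim_n H^1_\bff(K_{n,w},T)$. This is then computed purely locally: the numerator is free of rank two over $\Lambda$, and the denominator is zero (non-ordinary) or a saturated rank-one submodule (ordinary). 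No Heegner class enters this step.

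Finally, your approach to (1)---run an Iwasawa-theoretic Kolyvagin argument over the tower and then descend---is not wrong in spirit, but it is far heavier than what is needed, and the ``primitivity of $\loc_v(\bfz_{f,K_\infty})$ in the appropriate finite-slope local Iwasawa cohomology'' is ill-defined in the non-ordinary case (where that inverse limit vanishes). The paper instead proves $\Sel^{\emptyset,0}(K,A)=0$ at level zero by an elementary Poitou--Tate computation using only hypotheses (v) and (vi) as stated---no further Euler-system input---and then propagates upward to all $n$ via Nakayama and the exact control theorem for the BDP Selmer group (whose local conditions at $p$ are $\emptyset$ and $0$, hence behave well under control).
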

\begin{rem}
    We now explain the \textit{raisons d'\^{e}tre} of the hypotheses of Theorem~\ref{mainthm}. The splitting assumption (i) is inherent in our approach, namely the use of BDP Selmer groups in the study of usual Selmer groups; see \S\ref{subsec:strategy} for more details. As mentioned earlier, condition (ii) originates from Nekov\'a\v{r}'s construction of the lattice $T$. Entry (iii) gives the triviality of $A[\varpi]^{G_{K_{\infty,w}}}$ for $w\,|\,p$, which ensures certain desirable properties of the local and global cohomology groups under consideration (see Lemma~\ref{lem:torsion}). 
    Condition (iv) is a higher-weight variant of the $p$-freeness of Tamagawa numbers (\textit{cf.}~\cite[remark following Lemma 3.3]{greenberg-cetraro}), and is needed for the exact control theorem Proposition \ref{prop:control}. The premises (v)-(vi) are key inputs of our proof of the vanishing of the BDP Selmer groups. The local primitivity statement (v) is analogous to the assumption that the Heegner point $y_K$ of an elliptic curve $E$ satisfies $y_K\notin pE(K_v)$ in \cite[Theorem 1.4]{matar-supersingular}. Note that (v) implies $z_{f,K}\notin \varpi H^1(K,T)$, and thus if $f$ does not have complex multiplication in the sense of \cite[p.~34]{ribet:nebentypus} and $p$ is large enough, (vi) is true by a result of the second-named author (see Theorem \ref{th:mastella-th-0.2} and Remark \ref{rem:ribet}).
\end{rem}
\begin{rem}
    As the reader will notice, our proof of Theorem \ref{mainthm} in \S\ref{sec:bdp} and \S\ref{sec:universal-norms} does not employ the generalized Heegner class in an essential way. Rather, we rely on the existence of a cohomology class $z\in H^1_\bff(K,T)$ for which the conditions given by hypotheses (v) and (vi) above are met; note that any two such classes $z,z'$ differ only by an element of $\fo^\times$. As such, when $N^- = 1$, Theorem \ref{mainthm} holds when $z_{f,K}$ is replaced by the classical Heegner class $P(1)\in H^1_\bff(K,T)$ constructed by Nekov\'a\v{r} \cite{nekovar92}, where the corresponding statement of (vi) is a result of Besser \cite[proof of Theorem 1.2]{besser97} under some assumptions on $p$ (when $N^-\ne 1$, there is also a partial result in this direction given in \cite{elias-dvp}, where the bound of Shafarevich--Tate groups is not made explicit). In this article, we settle for using the class $z_{f,K}$, because
    \begin{itemize}
        \item condition (vi) is known to be valid by the work of the second-named author under the generalized Heegner hypothesis together with certain additional assumptions; see \S\ref{sec:heegner} for an extensive discussion of this result;
        \item from the perspective of the BDP Iwasawa main conjecture, the vanishing of $\Sel^{\emptyset,0}(K_\infty,A)$ is tied with $z_{f,K}$ being globally primitive in $H^1(K,T)$, as explained in Remark \ref{rem:BDP}. Therefore, it is natural to choose $z_{f,K}$ as our ``prima donna''.
    \end{itemize}
\end{rem}

\subsection{Strategy of proof}\label{subsec:strategy}
Our proof can be regarded as a synthesis of two important philosophies: that of Matar--Nekov\'a\v{r} \cite{matar-nekovar}, whose insight is that precise information of Selmer modules varying in the tower can be obtained from that at the bottom via purely Iwasawa-theoretical methods; and that of Kobayashi--Ota \cite{kobayashi-ota}, which emphasizes the utility of the BDP Selmer groups that can be handled uniformly for both ordinary and non-ordinary primes, making them remarkable surrogates to the usual Selmer groups whose behaviors are far more intricate.

More precisely, our proof of Theorem \ref{mainthm} proceeds as follows. First, in \S\ref{sec:bdp}, we establish the vanishing of BDP, i.e., relaxed-strict, Selmer groups, under the assumption of local primitivity of $z_{f,K}$ at the prime $v$; along the way, we review the control theorem for BDP Selmer groups in \S\ref{sec:control}. The rest of \S\ref{sec:bdp} then uses the vanishing to prove that the localizations $\Sel(K_n,A)\to H^1_\bff(K_{n,w},A)$ are isomorphisms for $n\in \Z_{\ge 0}$ and $w\mid p$ (the subscript $\bff$ denotes the Bloch--Kato local condition, and is recalled in \S\ref{sec:selmer}). On the one hand, this gives the growth formula for Selmer groups. On the other, it converts the study of Selmer groups to that of local cohomology groups, which we investigate in \S\ref{sec:universal-norms} using the theory of universal norms from $p$-adic Hodge theory.

\subsection{Obiter dictum}
As an aside, in Appendix \ref{app}, we present an alternative approach to results similar to Theorem \ref{mainthm} in the special case where $a_p(f) = 0$. This approach is modeled on that of Matar \cite{matar-supersingular}, which is based on the study of the (generalized) Heegner modules and crucially uses the plus/minus theory originally developed by Kobayashi in \cite{kobayashi03} (and later extended to modular forms in \cite{lei-compositio,lei10}). In this case, under some extra condition on the first and second generalized Heegner classes, the corank growth formula can be proved by assuming the global primitivity of $z_{f,K}$ (see Theorem \ref{thm:B}.(II)), which complements Theorem \ref{mainthm}. As part of the proof, we show that the Pontryagin duals of the plus and minus Selmer groups are free of rank one over $\Lambda$, generalizing \cite[Theorem~3.1]{matar-supersingular}. When $a_p(f)\ne0$, even though Sprung's $\sharp/\flat$ theory \cite{sprung} (and the higher weight generalization) is available in the anticyclotomic setting (see \cite{BL-21}), it is not clear to the authors how to extend the proof of Theorem~\ref{thm:B} to this generality. The main obstacle is the lack of an explicit description of the $\sharp/\flat$ Selmer groups over finite extensions $K_n$. Such description plays a crucial role in the proof of Theorem~\ref{thm:B}.

\subsection{Acknowledgement}
We thank Ming-Lun Hsieh for very helpful discussions regarding \cite{castella-hsieh:heegner-cycles-p-adic-l-functions} and Lemma \ref{lem:bad-reduction}, and Xin Wan for pointing out a gap in our argument. We are also grateful to Gautier Ponsinet for answering our questions about \cite{ponsinet20}. Thanks are also due to K\^{a}z{\i}m B\"uy\"ukboduk, Chan-Ho Kim, Naman Pratap, Ari Shnidman and Stefano Vigni for enlightening conversations and helpful comments. AL thanks his coauthors for inviting him to join this collaboration after it was initiated. He gives special thanks to LZ for spearheading the work on the appendix. LM thanks his coauthors for having pointed out many improvements and simplifications on his work. LZ is grateful to Meng Fai Lim for pointing out to him the paper \cite{mastella23} that led to this project, and to AL for continuously sharing valuable advice on becoming a better mathematician. He also thanks Xin Wan and the Morningside Center of Mathematics (MCM) for their generous support and warm hospitality during his visit.
Finally, the authors thank the anonymous referee for helpful comments and suggestions on an earlier version of the article, which led to many improvements.

 AL's research is supported by the NSERC Discovery Grants Program RGPIN-2020-04259 and RGPAS-2020-00096. LM is partially supported by PRIN 2022 The arithmetic of motives and $L$-functions. LZ is supported in part by the European Research Council (ERC, CurveArithmetic, 101078157), and his visit to MCM is supported by Xin Wan's National Key R\&D Program of China grant 2020YFA0712600 and NSFC grant 12288201. 

\section{Selmer groups}
\label{sec:selmer}

From now on, we will always assume that $p$ splits in $K$ and $p\nmid 2(k-2)!N\varphi(N)h_K$.

In this section, we introduce the various notions of Selmer groups that will be utilized throughout the paper. Let $S = \{\ell : \ell \mid N\}$ and for any finite extension $E/K$, let $S_E$ denote the set of primes of $E$ above those of $S$. For $n\in \Z_{\ge 0}\cup\{\infty\}$ we also use the shorthand $S_n$ in place of $S_{K_n}$. Write $\Sigma = S \cup \{p, \infty\} $, $\Sigma_E = S_E \cup \{w \mid p\infty\}$ and let $E_\Sigma$ be the maximal unramified extension of $E$ unramified outside $\Sigma_E$.

Recall that for any place $w$ of $E$ the Bloch--Kato finite local condition $\fcond$ on $V$ is defined as
\[
\honef(E_w,V) = 
    \begin{cases}
        \ker\big(H^1(E_w,V)\to H^1(E_w,V\otimes B_\cris)\big) &\text{ if }w\mid p;\\
        \ker\big(H^1(E_w,V)\to H^1(I_w,V)\big)&\text{ if }w\nmid p,
    \end{cases}
\]
where $I_w \subset G_{E_w}$ is the inertia subgroup at $w$ and $B_\cris$ is Fontaine's ring of cristalline periods. This defines also local subgroups $\honef(E_w, A) \subseteq \hone(E_w, A)$ and $\honef(E_w, T) \subseteq \hone(E_w, T)$ taking respectively its image and preimage via the morphisms induced by the natural projection $V \surj A$ and inclusion $T \subset V$.

For $M = T, V, A$ the Bloch--Kato Selmer group is defined as
\[
    \Sel(E, M) = \ker \Biggl(\hone(E_\Sigma, M) \to \prod_{\substack{w \in \Sigma_E\\w \nmid \infty}} \frac{\hone(E_w, M)}{\honef(E_w, M)} \Biggr).
\]
Note that the infinite places are ignored since $p>2$. 
For $E=K_n$ we will consider additional Selmer groups by varying the local conditions at places dividing $p$. Thus for $w\in \{v,\barv\}$ a place of $K_n$ we introduce:
\begin{itemize}
    \item the relaxed condition $\emptyset$ defined by
    \[
        H^1_\emptyset(K_{n,w},A) = H^1(K_{n,w},A);
    \]
    
    \item the strict condition $0$ defined by
    \[
        H^1_0(K_{n,w},A) = 0\subseteq H^1(K_{n,w},A).
    \]
\end{itemize}
We define for $\star, \bullet \in \{\fcond, 0, \emptyset\}$, the $(\star, \bullet)$-Selmer group of $A$ as 
\[
    \Sel^{\star, \bullet}(K_n, A) = \ker \Biggl(\hone(K_{n, \Sigma}, A) \to \prod_{w \in S_n} \frac{\hone(K_{n, w}, A)}{\honef(K_{n, w}, A)} \times 
    \frac{\hone(K_{n, v}, A)}{\hone_{\star}(K_{n, v}, A)} \times \frac{\hone(K_{n, \bar{v}}, A)}{\hone_{\bullet}(K_{n, \bar{v}}, A)}\Biggr).
\]
Note that $\Sel^{\fcond, \fcond}(K_n, A) = \Sel(K_n, A)$. In addition, we put $\Sel^{\star,\bullet}(K_\infty,A) = \displaystyle\ilim_n \Sel^{\star,\bullet}(K_n,A)$. We will call $\Sel^{\emptyset, 0}(K_n, A)$ the $\BDP$-Selmer group of $A$ and will denote it by $\Sel^\BDP(K_n, A)$, its name derived from its link with the
$p$-adic $L$-function of Bertolini--Darmon--Prasanna \cite{bdp:generalized} and Brako\v{c}evi\'{c} \cite{miljan} via the corresponding Iwasawa main conjecture; see \cite[Theorem~3.4]{castella17}, \cite[Theorem 1.5]{kobayashi-ota} and \cite[Theorem A]{lei-zhao}, as well as our discussion in Remark~\ref{rem:BDP} below.

Occasionally, we shall also consider compact Selmer groups over $K_n$:
\begin{align*}
    \Sel^{\star,\bullet}(K_n,T) = \plim_m\Sel^{\star,\bullet}(K_n,A)[p^m].
\end{align*}

\section{Review of generalized Heegner cycles}
\label{sec:heegner}

In what follows, we work with a distinguished set of cohomology classes $z_{f, c} \in \Sel(K[c], T)$, where $K[c]$ denotes the ring class field of $K$ of conductor $c\in \Z_{>0}$. 
Throughout this section, we shall assume $k>2$ when $N^-\ne1$, as this is the setting of \cite{magrone:generalized-heegner-cycles-quaternionic}, where such classes are constructed in the quaternionic case.

In the case $N^- = 1$, we define $z_{f, c} \in \hone(K[c], T)$ to be the class $z_{f, \chi, c}$ of \cite[(4.6)]{castella-hsieh:heegner-cycles-p-adic-l-functions} for $\chi$ the trivial character. These classes are the image of (a subclass of) the generalized Heegner cycles of \cite{bdp:generalized} via a suitable étale Abel--Jacobi map (see \cite[\S4.2, 4.4]{castella-hsieh:heegner-cycles-p-adic-l-functions} and \cite[\S3.3]{mastella23})
\[
\AJ_{K[c]} \colon \chow^{k-1}(X_{k-2}/K[c])_0 \otimes \fo \to \hone(K[c], T).
\]
Here,  $X_{k-2}$ is the so called \emph{generalized Kuga--Sato variety}, namely the product over the modular curve $X_1(N)$ of the Kuga--Sato variety $W_{k-2}$ and $(k-2)$-copies of a CM elliptic curve $A$ defined over $K[1]$. As explained in \cite[Remark 4.8]{castella-hsieh:heegner-cycles-p-adic-l-functions}, 
the image of $\AJ_{K[c]}$ is contained in $\Sel(K[c], T)$, so in particular $z_{f, c} \in \Sel(K[c], T)$.

When $N^- \ne 1$, we can define such classes in a similar way, after replacing $W_{k-2}$ with the Kuga--Sato variety defined over a (quaternionic) Shimura curve and $A$ with a false elliptic curve with CM by $\cO_K$. This construction is carried out in \cite{brooks:shimura-curves-l-fcts} and \cite{magrone:generalized-heegner-cycles-quaternionic}.
We write $z_{f, c}$ for the class $z_{\chi, c}$ of \cite[(5.2)]{magrone:generalized-heegner-cycles-quaternionic} with $\chi$ taken to be the trivial character. We shall abuse the notation and use the same letters $\AJ_{K[c]}$ to denote the étale Abel--Jacobi map of \cite[\S5.8-5.9]{magrone:generalized-heegner-cycles-quaternionic}; again its image is contained in $\Sel(K[c], T)$ and in particular $z_{f, c} \in \Sel(K[c], T)$ as noted in \cite[proof of Proposition 7.9]{magrone:generalized-heegner-cycles-quaternionic}.

In both cases, we let
\[
z_{f, K} = \cores_{K[1]/K}(z_{f, 1}) \in \Sel(K, T).
\]
The classes $z_{f,c}$ satisfy certain norm relations as $c$ varies, forming an (anticyclotomic) Euler system  
(see \cite[Proposition 7.4]{castella-hsieh:heegner-cycles-p-adic-l-functions} for the case $N^- =1$, \cite[Proposition 7.9]{magrone:generalized-heegner-cycles-quaternionic} for the case $N^-\ne 1$). Let $\squarefree$ be the set of square-free integers $n$ that are products of primes $\ell \nmid 2Np$ that are inert in $K$. For any $n = m \ell \in \squarefree$, with $\ell$ a prime, we have
\begin{enumerate}
    \item[(E1)] $\cores_{K[n]/K[m]}(z_{f, n}) = a_\ell z_{f, m}$; 
    \item[(E2)] for any compatible choice of primes $\lambda_n$ of $K[n]$ and $\lambda_m$ of $K[m]$ lying above $\ell$, we have $\loc_{\lambda_n}(z_{f, n})= \res_{K[m]_{\lambda_m}/K[n]_{\lambda_n}}(\Frob_\ell \cdot \loc_{\lambda_m}(z_{f, m}))$. Here, $\loc_{\lambda_s} \colon \hone(K[s], T) \to \hone(K[s]_{\lambda_s}, T)$ denotes the localization at $\lambda_s$ for $s = n, m$, and $\Frob_\ell \in \Gal(\Q_\ell^\ur/\Q_\ell)$ is the Frobenius element at $\ell$;
    \item[(E3)] $\tau \cdot z_{f, n} = w_f (\sigma \cdot z_{f, n})$, where $\tau$ denotes the complex conjugation, $w_f \in \{\pm 1\}$ is the eigenvalue of $f$ with respect to the Atkin--Lehner involution and $\sigma \in \Gal(K[n]/K)$ is the image of $\bar{\mathfrak{N}}$ via the Artin map.
\end{enumerate}

Using these properties, Castella--Hsieh and Magrone (see \cite[Theorem~7.7]{castella-hsieh:heegner-cycles-p-adic-l-functions} and \cite[Theorem 7.11]{magrone:generalized-heegner-cycles-quaternionic}) showed that if $z_{f, K}$ is not $\fo$-torsion, then $\Sel(K, V) = \fF z_{f,K}$. More precisely,  they prove that there exists a constant $C >0$ such that $p^C$ annihilates the quotient $\Sel(K, A)/(\fF/\fo)z_{f, K}$ (see in particular~\cite[Theorem 7.19]{castella-hsieh:heegner-cycles-p-adic-l-functions}). The second-named author of the present article improved this result in \cite[Theorem~0.2]{mastella23} by computing the constant $C$ in the case $N^-=1$, $k>2$, $p\ne3$ and $f$ is $p$-ordinary together with certain assumptions on $p$. It was shown that if $z_{f,K}$ is not $p$-divisible, then $C=0$, or equivalently $\Sel(K, A) = (\fF/\fo)z_{f, K}$ (cf.~\emph{op.~cit.}~Theorem 0.1). The following theorem is a slight extension of the aforementioned result. Note that we drop the hypotheses of $p$-ordinarity and the condition that $k>2$ since they are not used in the proof therein. Moreover, the exclusion of $p=3$ ensured the validity of Lemma 4.5 \textit{ibid.} (as explained in Remark 4.4 \textit{ibid.}), for which we will provide a different proof below (see Lemma~\ref{lemma:restriction-iso}). Furthermore, we relax the hypothesis $N^- =1$ using the quaternionic generalized Heegner cycles of \cite{magrone:generalized-heegner-cycles-quaternionic}.

\begin{thm}\label{th:mastella-th-0.2}
Let $p$ be a prime such that 
\begin{itemize}
    \item $p \nmid 2N \varphi(N)(k-2)!$;
    \item $p$ is unramified in $\fF$;
    \item $\rho_{f}$ has \emph{big image}, i.e., its image contains the set of matrices
    \[
    \left\{g \in \Gl_2(\fo) : \det g \in (\Z_p^\times)^{k-1}\right\}.
    \]
\end{itemize}
If $z_{f,K}$ is not $\fo$-torsion and is not divisible by $p$ as an element of $\Sel(K, T)$, then
\[
\Sel(K, A) = (\fF/\fo)z_{f, K}.
\]
\end{thm}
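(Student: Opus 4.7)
The plan is to follow the proof of \cite[Theorem 0.2]{mastella23}, verifying that the relaxations of hypotheses here --- removing the $p$-ordinary assumption, removing $k > 2$, and allowing $p = 3$ --- do not break the argument. The underlying strategy is Kolyvagin's method of derived Euler system classes. From the anticyclotomic Euler system $\{z_{f,n}\}_{n \in \squarefree}$ together with the norm relations (E1)-(E3), one constructs, for each $M \ge 1$ and each $n \in \squarefree$ whose prime divisors satisfy a $\varpi^M$-admissibility condition, a derivative class $\kappa(n) \in H^1(K, A[\varpi^M])$. Properties (E1)-(E2) together with a direct cohomological calculation place $\kappa(n)$ in a Selmer-type group with modified local conditions at primes dividing $n$. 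A global duality argument combined with Chebotarev density applied to the extension $K(A[\varpi^M])/K$ then forces any class $s \in \Sel(K, A[\varpi^M])$ to lie in the $\fo$-submodule generated by $\kappa(1)$. The big image hypothesis on $\rho_f$ is essential in the Chebotarev step, as it produces Kolyvagin primes $\ell$ whose Frobenius acts in a prescribed way on both $A[\varpi^M]$ and $s$. The base case of the induction --- the nonvanishing of $\kappa(1) = z_{f,K} \bmod \varpi$ in $\Sel(K, A[\varpi])$ --- is precisely the content of the assumption that $z_{f,K}$ is not $p$-divisible in $\Sel(K, T)$ (combined with its non-$\fo$-torsion nature, which ensures $\kappa(1)$ generates a rank-one submodule of $\Sel(K, V)$); passing to the direct limit in $M$ then yields $\Sel(K, A) = (\fF/\fo) z_{f,K}$.

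Regarding the relaxed hypotheses: a close inspection of the argument in \cite{mastella23} shows that neither the $p$-ordinary assumption nor the assumption $k > 2$ actually enters in a substantive way --- they appear only in auxiliary remarks --- so both may be dropped without modification. The genuine restriction at $p = 3$ in \textit{loc.~cit.}~originates from \cite[Lemma 4.5]{mastella23}, where the exclusion of $p = 3$ was needed to identify $H^1(K, A[\varpi^M])$ with the $G_K$-invariants of $H^1(K(A[\varpi^M]), A[\varpi^M])$ via inflation-restriction; concretely, one needs the vanishing of $H^i(\gal(K(A[\varpi])/K), A[\varpi])$ for $i = 1, 2$. My plan is to replace this step by Lemma \ref{lemma:restriction-iso} below, whose proof secures the required isomorphism uniformly in $p$ by using the big image hypothesis to exhibit a scalar action on $A[\varpi]$ by an appropriate element of the Galois group.

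The main obstacle is thus concentrated in Lemma \ref{lemma:restriction-iso}: the cohomological vanishing must be verified without the extra leverage available when $p > 3$, where one can simply invoke an order-coprimality argument between $\gal(K(A[\varpi])/K)$ and $A[\varpi]$. Once that replacement is in place, the remainder of the proof of \cite[Theorem 0.2]{mastella23} --- the construction of derivative classes, the Chebotarev argument, and the inductive descent --- transports verbatim, giving the stated equality $\Sel(K, A) = (\fF/\fo) z_{f, K}$.
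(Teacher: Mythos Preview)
Your overall strategy is exactly the paper's: follow \cite[Theorem 0.2]{mastella23}, note that the $p$-ordinary and $k>2$ hypotheses play no role in the argument, and remove the $p \ne 3$ restriction by reproving \cite[Lemma 4.5]{mastella23}. The paper packages the endgame by appealing directly to \cite[\S6]{besser97} once the derivative classes $P(n)$ and their three formal properties are in hand, but your description via Chebotarev and global duality is the same mechanism.

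The one concrete discrepancy is your identification of what \cite[Lemma 4.5]{mastella23} says and therefore what Lemma~\ref{lemma:restriction-iso} must prove. You describe it as the inflation-restriction isomorphism over the \emph{torsion field} $K(A[\varpi^M])$, to be secured via a nontrivial scalar supplied by the big-image hypothesis. In the paper, Lemma~\ref{lemma:restriction-iso} is instead about restriction to \emph{ring class fields}: it shows that $\Sel(K,T) \to \Sel(K[1],T)^{\Gal(K[1]/K)}$ and $\Sel(K_n,T) \to \Sel(K[p^{n+1}],T)^{\Gal(K[p^{n+1}]/K_n)}$ are isomorphisms, and the proof uses only $p \nmid h_K$, so that the intervening Galois groups have prime-to-$p$ order and the pro-$p$ coefficients force the relevant $H^i$ to vanish. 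Your assertion that for $p>3$ one may invoke an order-coprimality argument between $\Gal(K(A[\varpi])/K)$ and $A[\varpi]$ is also off: $p$ always divides $|\Gl_2(\fo/\varpi)|$, so no such coprimality is available over the torsion field at any $p$. The actual $p=3$ obstruction in \cite{mastella23} sits at the ring-class-field step, and the paper's repair is correspondingly different from the one you sketch.
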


\begin{rem}\label{rem:ribet}
Our assumption on the big image implies that $f$ is non-CM for the following reason. As discussed in \cite[first paragraph of p.~186]{ribet:rankin}, if $f$ were a CM form, $\mathrm{Lie}(\rho_f)$ could be identified with $L\otimes \Q_p$, where $L$ is the field of  ``complex multiplication'' of $f$ \cite[p.~34, Remark~1]{ribet:nebentypus}. However, the big image condition implies that this Lie algebra should contain $\mathfrak{sl}_2(\fF)$, which is a contradiction.
Consequently, \cite[discussion at the end of \S3]{ribet:rankin} tells us that Theorem~\ref{th:mastella-th-0.2} applies to all but finitely many $p$.
\end{rem}

\begin{proof}[Proof of Theorem \ref{th:mastella-th-0.2}]
The proof follows the same line of the argument of \cite[Theorem~0.2]{mastella23}. We give a brief summary here for the convenience of the reader and explain why the condition $N^-=1$ and the $p$-ordinarity of $f$ are not necessary. Let $M\ge1$ be an integer and write $\squarefree_M$ for the set of square-free products of prime numbers $\ell$ such that 
\begin{enumerate}
    \item $\ell \nmid Npd_K$;
    \item $\ell$ inert in $K$;
    \item $p^M \mid a_\ell, \ell+1$;
    \item $p^{M+1} \nmid \ell + 1 \pm a_\ell$.
\end{enumerate}
Applying the Kolyvagin derivative operator to the classes $z_{f, n}$, we may construct as in \cite[\S4.2]{mastella23} a family of classes $P(n) \in \hone(K, A[p^M])$, $n\in\squarefree_M$ and show that the properties (E1)-(E3) and the inclusion $z_{f, n} \in \Sel(K[n], T)$ imply that \cite[Proposition~3.2]{besser97} applies to our current setting. In other words, for any $n\in\squarefree_M$, we have:
\begin{enumerate}
    \item $P(n)$ belongs to the $\epsilon_n$-eigenspace of the complex conjugation acting on $\hone(K, A[p^M])$, where $\epsilon_n = (-1)^{\omega(n)}w_f \in \set{\pm1}$ (here, $\omega(n)$ denotes the number of prime factors of $n$);
    \item for any $v \nmid Nn$, $\loc_v(P(n)) \in \honef(K_v, A[p^M])$;
    \item if $n = m \cdot \ell$, there is an isomorphism $\honef(K_\lambda, A[p^M]) \cong \hones(K_\lambda, A[p^M])$, where $\hones = \hone/\honef$  is the singular quotient, such that $\loc_\lambda P(m)$ corresponds to $[\loc_\lambda P(n)]_s$ (the image of $\loc_\lambda P(n)$ in $\hones(K_\lambda, A[p^M])$). 
\end{enumerate}
The fact that the classes $P(n)$ satisfy properties (1)-(3) and our assumptions on $p$ are the input of \cite[\S6]{besser97} (where again $f$ is not assumed to be ordinary at $p$). We can therefore apply verbatim the results therein. In particular, the quotient 
\[
\Sel(K, A)/(\fF/\fo)z_{f, K} 
\]
is annihilated by $p^{2\mathcal{I}_p}$, where $\mathcal{I}_p$ denotes the smallest non-negative integer such that $z_{f, K}$ is non-zero in $\hone(K, A[p^{\mathcal{I}_p}])$. In particular, if $z_{f, K}$ is not divisible by $p$, we have  
\[
\Sel(K, A) = (\fF/\fo)z_{f, K}.\qedhere
\]
\end{proof}
We discuss consequences of Theorem~\ref{th:mastella-th-0.2}
on Shafarevich--Tate groups.
\begin{de}
    Let $E/K$ be a finite extension of fields. We define the Bloch--Kato--Shafarevich--Tate group of $f$ over $E$ as 
    \[
    \Sha_\BK(E, A) = \frac{\Sel(E, A)}{\Sel(E, A)_\div}.
    \]
\end{de}

Note in particular that, for any finite extension $E/K$, the group $\Sha_\BK(E, A)$ is finite since the Pontryagin dual $\Sel(E,A)^\vee$ is a finitely generated $\Zp$-module (see \cite[Proposition~B.2.7]{rubin:euler}).  In what follows, we give the definition of another type of Shafarevich--Tate groups via the Abel--Jacobi map, similar to the ones considered in \cite{nekovar92}. We first state the following technical lemma.

\begin{lem}\label{lemma:restriction-iso}
The restriction maps
\begin{gather*}
    \res_{K[1]/K} \colon \Sel(K, T) \to \Sel(K[1], T)^{\Gal(K[1]/K)},\\
    \res_{K[p^{n+1}]/K_n} \colon \Sel(K_n, T) \to \Sel(K[p^{n+1}], T)^{\Gal(K[p^{n+1}]/K_n)}, \ n\ge1
\end{gather*}
are isomorphisms.
\end{lem}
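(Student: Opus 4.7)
The plan is to use the inflation--restriction exact sequence for $G := \Gal(L/K_n)$, where $L = K[p^{n+1}]$ (with the obvious modifications when $L = K[1]$ and $K_n = K$), exploiting that $|G|$ is coprime to $p$.

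First I would verify that $|G|$ is prime to $p$. Using that $p$ splits in $K$, that $\cO_K^\times = \{\pm 1\}$ (as $d_K \ne -3,-4$), and the standard class-number formula for orders, one computes $[K[p^{n+1}]:K] = h_K(p-1)p^n$. Since $[K_n:K] = p^n$, we get $|G| = [K[p^{n+1}]:K_n] = h_K(p-1)$, which is prime to $p$ by $p > 2$ and $p \nmid h_K$; for the case $L = K[1]$, $|G| = h_K$ is prime to $p$ directly. The containment $K_n \subset K[p^{n+1}]$ is standard from the description of $K_\infty$ as sitting inside the tower of ring class fields of $p$-power conductor.

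Next, since $T$ is a $\Z_p$-module and $|G|$ is a unit in $\Z_p$, the usual averaging trick yields $H^i(G, M) = 0$ for $i \ge 1$ and every $\Z_p[G]$-module $M$. Note $L \subset K_{n,\Sigma}$ because $K[p^{n+1}]/K$ is ramified only above $p$. The inflation--restriction sequence then produces $H^1(K_{n,\Sigma}, T) \iso H^1(L_\Sigma, T)^G$.

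The remaining and main task is to promote this to an isomorphism of Selmer groups by checking that the Bloch--Kato local conditions correspond. For each place $w$ of $K_n$ with fixed $w' \mid w$ in $L$, the decomposition subgroup $\mathrm{Dec}(w'/w) \subset G$ still has prime-to-$p$ order, so the analogous local argument gives $H^1(K_{n,w}, T) \iso H^1(L_{w'}, T)^{\mathrm{Dec}(w'/w)}$. For $w \nmid p$, the unramified condition is preserved because $I_{K_{n,w}}/I_{L_{w'}}$ is again a prime-to-$p$ quotient, killing $H^1(I_{K_{n,w}}/I_{L_{w'}}, T^{I_{L_{w'}}})$. For $w \mid p$, I would run the same inflation--restriction argument with coefficients $V \otimes B_\cris$: since $|G|$ is invertible in $B_\cris$, the relevant group cohomology vanishes, yielding $H^1_\bff(K_{n,w}, V) \iso H^1_\bff(L_{w'}, V)^{\mathrm{Dec}(w'/w)}$, and the corresponding statement for $T$ follows by taking preimages. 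I expect this last step, the local analysis at $w \mid p$, to be the main (though ultimately formal) obstacle: one must carefully track how the preimage definition of $H^1_\bff(\cdot, T)$ interacts with inflation--restriction to confirm that the lattice-level condition transfers correctly. The case $L = K[1]$, $K_n = K$ is strictly simpler since $K[1]/K$ is unramified everywhere, so the local-condition verification is then immediate.
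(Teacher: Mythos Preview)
Your approach is correct and essentially the same as the paper's: both compute $|\Gal(K[p^{n+1}]/K_n)| = h_K(p-1)$, note this is prime to $p$, and invoke inflation--restriction together with the vanishing of group cohomology of a prime-to-$p$ group with pro-$p$ coefficients. The paper's proof is more terse---it simply observes that the kernel and cokernel of the restriction map are subquotients of $H^i(G, H^0(K[p^{n+1}], T))$ for $i=1,2$ and stops---whereas your additional verification that the Bloch--Kato local conditions transfer correctly is more careful than what the paper writes out (though this is standard once one uses that $\cores\circ\res = |G|$ is invertible on pro-$p$ modules and that both $\res$ and $\cores$ preserve the local conditions).
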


\begin{proof}
We follow a similar argument given in \cite[Lemma~5.5.9]{pratap}.   Let $(L,F)$ be either $(K[1],K)$ or $(K[p^{n+1}],K_n)$ and write $G=\Gal(L/F)$. Under the assumption $p\nmid h_K$, we have $p\nmid \#G$. 
    consider the following commutative diagram with exact rows:
\[\begin{tikzcd}
	0 & {\text{Sel}(F, A)} && {H^1(F_\Sigma,A)} &&  \prod_{v}\hones(F_v, A)  \\
	\\
	0 & {\text{Sel}(L, A)^G} && {H^1(L_\Sigma, A)^G} &&\prod_{w}\hones(L_w, A)^G
	\arrow[from=1-1, to=1-2]
	\arrow[from=1-2, to=1-4]
	\arrow[from=1-2, to=3-2]
	\arrow[from=1-4, to=1-6]
	\arrow["\res", from=1-4, to=3-4]
	\arrow["\prod_v g_v", from=1-6, to=3-6]
	\arrow[from=3-1, to=3-2]
	\arrow[from=3-2, to=3-4]
	\arrow[from=3-4, to=3-6]
\end{tikzcd}\]
    By the inflation-restriction sequence, the kernel and cokernel  of $\res$ are given by
    $ H^i(G, H^0(L, A))$, for $i=1,2$, respectively. As $H^0(L, A)$ is a pro-$p$ group while $p\nmid \#G$, \cite[Corollary 6.1]{atiyah-wall} tells us that $\res$ is an isomorphism. Hence, by the snake lemma, it suffices to show that $g_w$ is injective.

Let $v$ be a place of $F$ and let $w$ be a place of $L$ above $v$. Let $D=\Gal(L_w/F_v)$. We have the following commutative diagram where the rows are exact: \[\begin{tikzcd}[ampersand replacement=\&]
	{0 } \& {\honef(F_v,A)} \& {H^1(F_v,A)} \& {\hones(F_v,A)} \& 0 \\
	0 \& {\honef(L_w,A)^D} \& {H^1(L_w,A)^D} \& {\hones(L_w,A)^D.}
	\arrow[from=1-1, to=1-2]
	\arrow[from=1-2, to=1-3]
	\arrow["h_v",from=1-2, to=2-2]
	\arrow[from=1-3, to=1-4]
	\arrow["r_{A,v}",from=1-3, to=2-3]
	\arrow[from=1-4, to=1-5]
	\arrow["g_v",from=1-4, to=2-4]
	\arrow[from=2-1, to=2-2]
	\arrow[from=2-2, to=2-3]
	\arrow[from=2-3, to=2-4]
\end{tikzcd}\]
As the order of $D$ is coprime to $p$, the map $r_{A,v}$ is an isomorphism, as before. Thus, by the snake lemma again, it suffices to show that $h_v$ is surjective.

Suppose that $v|p$ and consider the following commutative diagrams
    \begin{equation}
        \begin{tikzcd}[ampersand replacement=\&]
	 {H^1(F_v,V)} \& {H^1(L_w,V)^D}\\
{H^1(F_v,V\otimes \Bcrys)} \& {H^1(L_w,V\otimes\Bcrys)^D,}
	\arrow["r_V",from=1-1, to=1-2]
	\arrow["b",from=1-1, to=2-1]
	\arrow["r_B",from=2-1, to=2-2]
	\arrow["b'",from=1-2, to=2-2]
	\end{tikzcd}\label{eq:diagram1}
    \end{equation}
    \begin{equation}
        \begin{tikzcd}[ampersand replacement=\&]
	 {H^1(F_v,V)} \& {H^1(L_w,V)^D}\\
{H^1(F_v,A)} \& {H^1(L_w,A)^D,}
	\arrow["r_V",from=1-1, to=1-2]
	\arrow["\pi",from=1-1, to=2-1]
	\arrow["r_A",from=2-1, to=2-2]
	\arrow["\pi'",from=1-2, to=2-2]
	\end{tikzcd}\label{eq:diagram2}
    \end{equation}
    where we have written $r_A$ for $r_{A,v}$ to simplify the notation.
    Note that $r_V$ and $r_B$ are all isomorphisms as $p\nmid \#D$.

Recall that $\honef(L_w,A)$ is defined as the image of $\honef(L_w, V)$ under the map induced by the natural map $V \to A$. This gives a surjection $\honef(L_w,V)\to\honef(L_w,A)$. As the kernel of this map is pro-$p$, the long exact sequence with respect to the group cohomology of $D$ gives a surjection  $\pi'_{\bff}:\honef(L_w,V)^D\to\honef(L_w,A)^D$.

Let $x_0\in \honef(L_w,A)^D$. Since $h_v$ is the restriction of $r_A$ to $\honef(F_v, A)$, the bijectivity of $r_A$ tells us that there exists $x\in H^1(F_v,A)$ such that $r_A(x)=x_0$.
In order to show the surjectivity of $h_v$, we show that $x$ is, in fact, an element of $\honef(F_v,A)$. This is equivalent to finding an element $z\in\honef(F_v,V)$ such that $\pi(z)=x$.

By the surjectivity of $\pi'_{\bff}$, there exists $y\in \honef(L_w,V)^D$ such that $\pi'(y)=x_0=r_A(x)$. In particular, $b'(y)=0$ by the definition of $\honef(L_w,V)$. Since $r_V$ is a bijection, we can find $z\in H^1(F_v,V)$ such that $r_V(z)=y$. Then $r_B\circ b(z)=0$ from the diagram \eqref{eq:diagram1}. The injectivity of $r_B$ implies that $b(z)=0$. So, $z\in \honef(K_v,V)$. From \eqref{eq:diagram2}, $r_A(x)=r_A\circ\pi(z)$. As $r_A$ is bijective, we have $x=\pi(z)$, as desired.

When $v \nmid p$, we have analogous commutative diagrams as \eqref{eq:diagram1} and \eqref{eq:diagram2}, with the map $r_B$ replaced by $r_I: H^1(I_v, V) \to H^1(I_w, V)^{I_w/I_v}$. Thus, the same conclusion holds, as desired.
\end{proof}

\begin{de}\label{def:AJ-image}
    We define the `modified' image of the Abel--Jacobi map 
    \[
    \Psi(K) = \res^{-1}_{K[1]/K}\Big( (\im \, \AJ_{K[1]})^{\Gal(K[1]/K)} \Big) \subseteq \Sel(K, T)
    \] and for any $n \ge 1$,  
    \[
    \Psi(K_n) = \res^{-1}_{K[p^{n+1}]/K_n}\Big( (\im \, \AJ_{K[p^{n+1}]})^{\Gal(K[p^{n+1}]/K_n)} \Big)\subseteq \Sel(K_n, T).
    \]
    The Abel--Jacobi--Shafarevich--Tate group of $f$ over $K_n$, for $n \ge 0$, is defined as
    \[
    \Sha_\AJ(K_n, A) = \frac{\Sel(K_n, A)}{\Psi(K_n) \otimes_{\fo} \fF/\fo}.
    \]
\end{de}

The reader is referred to \cite[Definitions~3.7 and~5.19]{mastella23}, where $\Psi$ is denoted by $\widetilde{\Lambda}_{\mathfrak{p}}$ and $\Sha_\AJ$ by $\widetilde{\Sha}_{\mathfrak{p}^\infty}$, for further details. {As remarked above, the definition of $\Sha_\AJ(K_n, A) $ is inspired by \cite{nekovar92}. Here, the modified image $\Psi(K_n)$ is introduced to ensure that  $z_{f, K} \in \Psi(K)$ and  $\alpha_n := \cores_{K[p^{n+1}]/K_n}(z_{f, p^{n+1}}) \in \Psi(K_n)$, for any $n\ge 0$. Since classical Heegner cycles are known to lie inside the image Abel--Jacobi map, no modification of the image was necessary in \emph{loc.~cit.} This inclusion facilitates the use of generalized Heegner cycles in the study of Shafarevich--Tate groups, which we will do in the appendix of the article.}

Note that $\Psi(K_n) \otimes \fF/\fo \subseteq \Sel(K_n, A)_\div$, by the commutativity of the following diagram for $n \ge 0$ 
\[
\begin{tikzcd}
    \Psi(K_n) \otimes \fF \ar[r]\ar[d, twoheadrightarrow] & \Sel(K_n, V) \ar[d]\\
    \Psi(K_n) \otimes (\fF/\fo) \ar[r] & \Sel(K_n, A) .
\end{tikzcd}
\]
Hence, there is a projection
\[
\Sha_\AJ(K_n, A) \surj \Sha_\BK(K_n, A).
\] 
As far as the authors are aware, it is not known whether this map is an isomorphism in general. However, under an additional assumption on $z_{f,K}$, we may prove the following result, which is analogous to \cite[Proposition~4.20]{longo-vigni:tamagawa} (see also Remark \ref{rem:serendipity}).
\begin{prop}
    Suppose that $z_{f, K}$ is not $\fo$-torsion. Then, $\Sha_\AJ(K, A) = \Sha_\BK(K, A)$.
\end{prop}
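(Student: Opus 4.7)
The plan is to upgrade the surjection $\Sha_\AJ(K, A) \twoheadrightarrow \Sha_\BK(K, A)$ to an isomorphism, i.e.~to show that the inclusion $\Psi(K) \otimes_\fo \fF/\fo \subseteq \Sel(K, A)_\div$ exhibited by the commutative diagram above is in fact an equality.

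First I would pin down the size of the target. Since $z_{f, K}$ is not $\fo$-torsion, the Castella--Hsieh--Magrone rigidity result recalled earlier gives $\Sel(K, V) = \fF \cdot z_{f, K}$, which is one-dimensional over $\fF$. Standard bookkeeping with the long exact sequence attached to $0 \to T \to V \to A \to 0$ (together with the finiteness of $H^0(K, A)$, itself a consequence of the standard facts about $\rho_f$ recalled earlier) then implies that $\Sel(K, A)_\div$ is a cofinitely generated divisible $\fo$-module of corank $\dim_\fF \Sel(K, V) = 1$, and hence $\Sel(K, A)_\div \simeq \fF/\fo$.

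Next I would verify that the class $z_{f, K}$ itself belongs to $\Psi(K)$. From $z_{f, K} = \cores_{K[1]/K}(z_{f, 1})$ one computes
\[
    \res_{K[1]/K}(z_{f, K}) = \sum_{\sigma \in \Gal(K[1]/K)} \sigma \cdot z_{f, 1},
\]
which lies in $(\im \AJ_{K[1]})^{\Gal(K[1]/K)}$, since the image of $\AJ_{K[1]}$ is $\Gal(K[1]/K)$-stable and contains $z_{f, 1}$.

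Combining these observations, the image of $\Psi(K) \otimes \fF/\fo$ inside $\Sel(K, A)$ is a divisible subgroup containing the non-zero divisible line generated by the image of the non-torsion class $z_{f, K}$, hence a non-zero divisible submodule of $\Sel(K, A)_\div \simeq \fF/\fo$. Because $\fF/\fo$ admits no proper non-zero divisible subgroup, the containment must be an equality, and the proposition follows. The step requiring the most attention is the first one, namely translating the one-dimensionality of $\Sel(K, V)$ over $\fF$ into corank $1$ for $\Sel(K, A)_\div$ over $\fo$; this should be standard Bloch--Kato bookkeeping, but it is the only place where one must verify the finiteness of the relevant $H^0$ terms.
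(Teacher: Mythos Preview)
Your proposal is correct and follows essentially the same approach as the paper's proof. Both arguments reduce to showing that $\Psi(K)\otimes_\fo\fF/\fo$ and $\Sel(K,A)_\div$ are each of $\fo$-corank one and hence coincide. The only difference is the cited input: the paper invokes \cite[Theorem~7.19]{castella-hsieh:heegner-cycles-p-adic-l-functions} (and its quaternionic analogue) to conclude directly that $\Sel(K,A)/(\fF/\fo)z_{f,K}$ is annihilated by a power of $p$, whereas you use the equivalent statement $\Sel(K,V)=\fF\cdot z_{f,K}$ and then pass to coranks via the exact sequence for $0\to T\to V\to A\to 0$. Your explicit verification that $z_{f,K}\in\Psi(K)$ is a point the paper leaves implicit.
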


\begin{proof}
   When $N^-=1$, \cite[Theorem 7.19]{castella-hsieh:heegner-cycles-p-adic-l-functions} tells us that $\Sel(K,A)/(\fF/\fo)z_{f,K}$ is annihilated by a power of $p$ and thus finite. In particular, it follows that $\Sel(K,A)$ has corank $1$ over $\fo$. Since $z_{f, K} \in \Psi(K)$, the group $\Sha_\AJ(K, A)$ is a quotient of $\Sel(K,A)/(\fF/\fo)z_{f,K}$ and therefore again finite. Consequently, $\Psi(K)\otimes_\fo \fF/\fo$ and $\Sel(K, A)$ both have corank $1$ over $\fo$. The same can be said when $N^-> 1$ since the results of \cite[\S7.5]{castella-hsieh:heegner-cycles-p-adic-l-functions} can be readily generalized after replacing the anticyclotomic Euler system in \textit{loc.~cit.}~by the one constructed in \cite{magrone:generalized-heegner-cycles-quaternionic}.
    {Therefore, in both cases, we deduce that the $\fo$-modules $\Psi(K)\otimes_\fo \fF/\fo$ and $\Sel(K, A)_\div$ are cofree of corank one. In particular, they must be equal to each other. As $\Sha_\AJ(K, A) =\Sel(K,A)/ (\Psi(K)\otimes_\fo \fF/\fo)$ and $\Sha_\BK(K, A)=\Sel(K,A)/\Sel(K, A)_\div$,} the proposition follows.
\end{proof}

    From Theorem~\ref{th:mastella-th-0.2} we deduce that
    \begin{cor}\label{cor:mastella-th-0.2}
    Let $p$ be as in the statement of Theorem~\ref{th:mastella-th-0.2}. Suppose that $z_{f,K}$ is not $\fo$-torsion and that it is not divisible by $p$ as an element of $\Sel(K, T)$. Then,
    \[
    \Sha_\AJ(K, A) = \Sha_\BK(K, A) = 0.
    \]
    \end{cor}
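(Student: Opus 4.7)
The plan is to chain together the two previous results—Theorem \ref{th:mastella-th-0.2} and the preceding proposition—in a straightforward way.

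First, since $z_{f,K}$ is assumed to be non-torsion, the preceding proposition directly gives the equality $\Sha_\AJ(K,A) = \Sha_\BK(K,A)$. Hence it suffices to prove that $\Sha_\BK(K,A) = 0$, i.e., that $\Sel(K,A)$ coincides with its maximal divisible subgroup $\Sel(K,A)_\div$.

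Second, I would invoke Theorem \ref{th:mastella-th-0.2}, whose hypotheses are exactly those of the present corollary: it yields the identification $\Sel(K, A) = (\fF/\fo) z_{f, K}$. The right-hand side, being a quotient of the divisible $\fo$-module $\fF/\fo$ (via the map $x \mapsto x\cdot z_{f,K}$), is itself divisible. Consequently $\Sel(K,A)_\div = \Sel(K,A)$, and therefore
\[
\Sha_\BK(K, A) = \Sel(K,A)/\Sel(K,A)_\div = 0.
\]
Combined with the first step, this gives $\Sha_\AJ(K,A) = \Sha_\BK(K,A) = 0$.

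There is no real obstacle: both ingredients are fully in place, and the argument is essentially the observation that under the $p$-indivisibility of $z_{f,K}$, the Bloch--Kato Selmer group is already divisible, which collapses both Shafarevich--Tate groups simultaneously.
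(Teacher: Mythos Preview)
Your proposal is correct and matches the paper's approach exactly: the paper simply states that the corollary follows from Theorem~\ref{th:mastella-th-0.2}, implicitly combining the preceding proposition (giving $\Sha_\AJ(K,A) = \Sha_\BK(K,A)$) with the divisibility of $(\fF/\fo)z_{f,K}$.
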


    \begin{rem}
    \label{rem:serendipity}
    If $f$ is $p$-ordinary, Theorem \ref{th:mastella-th-0.2} can be used to prove (see \cite[Theorem 5.23]{mastella23}) that 
    \[
    \Sha_\AJ(K_\infty, A) :=  \frac{\Sel(K_\infty, A)}{\Big(\ilim_n \Psi(K_n)\Big) \otimes \fF/\fo} = 0.
    \] 
    In the current article, we prove that the same vanishing holds for the Bloch--Kato--Shafarevich--Tate group over $K_n$ for all $n\in \Z_{\ge 0}\cup\{\infty\}$ under suitable assumptions for both ordinary and non-ordinary primes. 
        Moreover, in the appendix, we extend the $\Sha_{\AJ}$ vanishing results of \cite{mastella23} to non-ordinary primes assuming  $a_p = 0$ (see Theorem~\ref{thm:B}.(V)). It should become apparent to the reader that the proofs we present here utilize techniques of a very different nature.

        In general, it is not known whether $\Sha_\AJ(K_n, A)$ and $\Sha_\BK(K_n, A)$ coincide. 
     In the ordinary setting, Theorem~\ref{mainthm} together with \cite[Theorem 5.23]{mastella23} imply that $\Sha_\AJ(K_\infty, A)=\Sha_\BK(K_\infty, A)=0$ under appropriate hypotheses. In the non-ordinary setting, when $a_p=0$, Theorem~\ref{thm:B}.(III) shows that $\Sha_\AJ(K_n, A)$ and $\Sha_\BK(K_n, A)$ coincide under the hypotheses therein (and vanish simultaneously under an additional condition). Such agreements obtained in this article seem to be a remarkable serendipity.
    \end{rem}

\section{Control theorem}
\label{sec:control}

We first introduce some notation. Recall that $K_n$ is the $n$-th layer of the anticyclotomic $\Z_p$-extension $K_\infty/K$, and we denote $\Gamma_n = \gal(K_\infty/K_n) = \Gamma^{p^n}$. Given $w\nmid p$ a place of $K$, let $w'\mid w$ be a place of $K_\infty$, and write $G_{K_\infty,w'}$ for the decomposition group of $w'$ in $G_{K_\infty}$. Define
\begin{align*}
    B_w = A^{G_{K_{\infty,w'}}}/\big(A^{G_{K_{\infty,w'}}}\big)_{\div};
\end{align*}
note that $b_w = |B_w|$ depends on $w$ and not on $w'$. Also, {since $B_w$ is a $p$-group, $p\nmid b_w$ if and only if $B_w=0$, namely $A^{G_{K_{\infty,w'}}}$ is divisible.} In what follows, for a Selmer condition $\star = \bff,0,\emptyset$, we will write $H^1_{/\star}$ for $H^1/H^1_\star$ for local and global cohomology groups.

We will work under the following hypotheses:
\begin{align}\tag{Tors.}\label{ass:tors}
    \text{if $f$ has weight 2 and is $p$-ordinary, then }a_p\not\equiv 1\pmod \varpi;
\end{align}
\begin{align}\tag{Tama.}\label{ass:tama}
    \text{for any place $w\mid N$ of $K$, }p\nmid b_w.
\end{align}

The assumption \eqref{ass:tors} should be thought of as a restraint on the torsion group $A$, as the following lemma shows. We note that in what follows, when referring to \eqref{ass:tors}, we are often having Lemma \ref{lem:torsion}.(a) in mind.

\begin{lem}\label{lem:torsion}
    Suppose \eqref{ass:tors} holds. Let $w$ be a place of $K$ lying above $p$. Then 
    \begin{itemize}
        \item[(a)] $H^0(K_{\infty,w},A)=0$;
        \item[(b)] suppose that $f$ is ordinary at $p$ and let $T^+$ denote the unique rank-one sub-representation of $T|_{G_{K_w}}$ whose Hodge--Tate weight is $k/2$. Let $A^-=\Hom(T^+,\fF/\fo)(1)$. Then, we have $H^0(K_{\infty,w},A^-)=0$.
    \end{itemize}
\end{lem}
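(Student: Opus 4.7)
The strategy is to reduce both parts to a question about $G_{\Qp}$-invariants of a finite $\fo/\varpi$-module via two general reductions. First, since $A$ and $A^-=\Hom(T^+,\fF/\fo)(1)$ are divisible and $\varpi^\infty$-torsion, so are their $G_{K_{\infty,w}}$-invariants, and by the long exact sequence for multiplication by $\varpi$ these vanish iff the corresponding $\varpi$-torsion invariants do; thus it suffices to show $A[\varpi]^{G_{K_{\infty,w}}}=0$ (respectively $A^-[\varpi]^{G_{K_{\infty,w}}}=0$). Second, $p$ splits in $K$ and $p\nmid h_K$, so $K_w\simeq\Qp$ and the places above $p$ are totally ramified in $K_\infty/K$, giving $\Gal(K_{\infty,w}/K_w)\cong\Z_p$. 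A pro-$p$ group acting on a nonzero finite $\F_p$-vector space always has a nonzero fixed vector, so for any finite $(\fo/\varpi)[G_{K_w}]$-module $N$ we have $N^{G_{K_{\infty,w}}}=0$ iff $N^{G_{K_w}}=0$. It therefore suffices to check triviality of $G_{\Qp}$-invariants, which can be read off the local structure of the representation.

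For part (a), I would split according to whether $f$ is $p$-ordinary. Hypothesis (ii) gives $p>k-1$, placing us in the Fontaine--Laffaille range. In the non-ordinary case, $\bar\rho_f|_{G_{\Qp}}$ is then absolutely irreducible as a two-dimensional $\fo/\varpi$-representation (a standard consequence of the Fontaine--Laffaille classification of crystalline representations), yielding $A[\varpi]^{G_{\Qp}}=0$ immediately. In the ordinary case, the short exact sequence $0\to T^+/\varpi\to T/\varpi\to T^-/\varpi\to 0$ reduces the task to showing that each outer factor has no $G_{\Qp}$-invariants. The character on $T^+/\varpi$ is the reduction of a power $\chi_{\cyc}^{\pm k/2}$ of the cyclotomic character times an unramified character; since $1\le k/2\le p-2$, its ramified part is nontrivial on inertia, so the invariants vanish. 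The character on $T^-/\varpi$ has ramified part $\bar\chi_{\cyc}^{\pm(1-k/2)}$, which is again nontrivial when $k\ge 4$; when $k=2$, however, it is unramified with $\Frob_p\mapsto \bar a_p$, so the invariants vanish iff $\bar a_p\ne 1$, which is precisely the content of \eqref{ass:tors}.

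For part (b), I would use the self-duality pairing $T\times T\to\fo(1)$ of Nekov\'a\v{r} to identify $A^-[\varpi]$ with $T^-/\varpi$ as $G_{K_w}$-modules. The restriction of the pairing to $T^+\times T^+$ must vanish, because a nonzero $G_{K_w}$-equivariant map $T^+\otimes T^+\to \fo(1)$ would force the Hodge--Tate identity $k/2+k/2=1$, impossible for $k\ge 2$. Hence $T^+$ is isotropic, and a dimension count gives a canonical isomorphism $(T^+)^\vee(1)\simeq T^-$, whence $A^-[\varpi]\cong T^-/\varpi$ as $G_{K_w}$-modules. The desired vanishing now follows from the character analysis of $T^-/\varpi$ already carried out in part (a).

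The only step requiring substantive input beyond elementary Galois cohomology and character bookkeeping is the residual irreducibility of $\bar\rho_f|_{G_{\Qp}}$ in the non-ordinary case, which I expect to be the main place a standard citation from the Fontaine--Laffaille literature will be needed. Apart from that, hypothesis \eqref{ass:tors} is seen to be calibrated precisely to handle the sole remaining corner, namely the weight-$2$ ordinary case, in which the unramified character on $T^-/\varpi$ could a priori fail to be nontrivial.
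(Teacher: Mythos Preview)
Your proposal is correct and follows essentially the same route as the paper: reduce to $\varpi$-torsion invariants over $K_w\simeq\Qp$ via a pro-$p$ fixed-point argument, invoke Fontaine-type irreducibility in the non-ordinary case, and in the ordinary case read off the vanishing from the two characters $\bar\delta\,\bar\chi_{\cyc}^{k/2}$ and $\bar\delta^{-1}\bar\chi_{\cyc}^{1-k/2}$ appearing in the filtration. The only difference is organizational: the paper proves (b) first by computing the character of $A^-$ directly from its definition and then reuses that computation for the quotient step in (a), whereas you run the argument in the opposite order and reach $A^-[\varpi]\cong T^-/\varpi$ via an isotropy argument for the self-duality pairing---this is fine, though the one-line observation $\det(T/\varpi)=\bar\chi_{\cyc}$ (so $(\bar\psi^{+})^{-1}\bar\chi_{\cyc}=\bar\psi^{-}$) gets there more quickly.
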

\begin{proof}
    If $f$ is non-ordinary at $p$, the assertion in (a) is a consequence of a result of Fontaine (see, e.g., Lemma \ref{lem:lem4.4}). In the remainder of the proof, we assume that $f$ is ordinary at $p$. The representation $T|_{G_{K_w}}$ is of the form
    \[
    \begin{bmatrix}
        \delta\chi_\cyc^{k/2}&*\\0&\delta^{-1}\chi_\cyc^{1-k/2}
    \end{bmatrix},
    \]
    where $\chi_\cyc$ is the cyclotomic character, and $\delta$ is an unramified character that sends the geometric Frobenius to the unit root of $X^2-a_p(f)X+p^{k-1}$ (see \cite[p.~445]{longo-vigni-ny}). In particular, $$A^-\simeq\left(\fF/\fo\right)\left(\delta^{-1}\chi_\cyc^{1-k/2}\right).$$
    
    Let $I_w$ be the inertia group of $G_{K_w}$. Then the image of $I_w$ under $\chi_\cyc$ is $\Z_p^\times$. If $k\ge 4$, we have $-(p-1)<1-k/2< 0$ (thanks to our running hypothesis that $p\nmid 2(k-2)!N\varphi(N)$). Thus, $\delta^{-1}\chi_\cyc^{1-k/2}(I_w)\not\subseteq 1+p\Zp$. This implies that $H^0(I_w,A^-)=0$. If $k=2$, the hypothesis that $a_p(f)\not\equiv1\mod\varpi$ ensures that $\delta^{-1}\chi_\cyc^{1-k/2}=\delta^{-1}$ is not the trivial character modulo $\varpi$, so $H^0(K_w,A^-[\varpi])=0$ still. In both cases, we have $H^0(K_w,A^-)=0$. Consequently, we can apply an orbit-stabilizer argument, similar to the one given in \cite[proof of Lemma 5.2]{dionray} to deduce that $H^0(K_{n,w},A^-)=0$ for all $n\in \Z_{\ge 0}$: Suppose for the sake of contradiction that there is some nonzero $x\in H^0(K_{n,w},A^-)$. Let $M_x = \fo[\gal(K_{n,w}/K_w)]\cdot x\subseteq H^0(K_{n,w},A^-)$. Since $x$ is torsion over $\Zp$, the module $M_x$ is finite. Consider the partition of $M_x$ into the orbits under the $\gal(K_{n,w}/K_w)$-action. We have
	\begin{align}
		|M_x| = |\gal(K_{n,w}/K_w)\cdot 0| + \sum_{x'\ne 0} |\gal(K_{n,w}/K_w)\cdot x'|,
	\end{align}
	where the sum runs over the orbits in $M_x$ generated by  nonzero elements. Note that if $x'\ne 0$, then the orbit containing $ x'$ contains more than one element, as otherwise $x'\in H^0(K_{w},A^-)=0$, a contradiction. In particular, as $\gal(K_{n,w}/K_w)$ is a $p$-group, $|\gal(K_{n,w}/K_w)\cdot x'|$ is divisible by $p$. It follows that
    \begin{align*}
        |M_x|\equiv |\gal(K_{n,w}/K_w)\cdot 0| = |\{0\}| = 1\pmod p,
    \end{align*}
    which is impossible since $M_x$ is a nontrivial $p$-group. This shows that $H^0(K_{n,w},A^-)$ is identically zero. Now, recall that the Galois action on $V$ is continuous \cite[Proposition 3.1.(1)]{nekovar92}, so we have a continuous map $\mathcal{J}\colon G_{K_w}\times V\to V$ given by the group action. Hence, if $x\in V$, then $\mathcal{J}^{-1}(x+T)$ must contain $U\times (x+T)$ for some open subset $U$ of $G_{K_w}$. Thus any element of $A = V/T$ has a stabilizer that contains an open subset of $G_{K_w}$, whereby it must be open itself. As such, we find that any element in $H^0(K_{\infty,w},A^-)$ is stabilized by an open subgroup containing $G_{K_{\infty,w}}$, meaning it is invariant under $G_{K_{n,w}}$ for some $n$. In sum, $H^0(K_{\infty,w},A^-) = \cup_{n\ge 0} H^0(K_{n,w},A^-) = 0$, which proves part (b) of the lemma.

    We now turn our attention to part (a). As above, it suffices to show that $H^0(K_w,A)=0$. Let $\{e_1,e_2\}$ be a basis of $\fo^{\oplus 2}$ on which $G_{K_w}$ acts through the matrix $   \begin{bmatrix}
        \delta\chi_\cyc^{k/2}&\Xi\\0&\delta^{-1}\chi_\cyc^{1-k/2}
    \end{bmatrix}$. 
    Suppose $H^0(K_w,A)\ne0$. Then there exists an element $xe_1+ye_2\in \fo^{\oplus 2}$ such that, for all $\sigma\in G_w$, $$\sigma\cdot (xe_1+ye_2)\equiv xe_1+ye_2\pmod\varpi.$$
    In other words, for all $\sigma\in G_w$,
    $$\left(x\delta\chi_\cyc^{k/2}(\sigma)+y\Xi(\sigma)\right)e_1+y\delta^{-1}\chi_\cyc^{1-k/2}(\sigma)e_2\equiv xe_1+ye_2\pmod\varpi.$$
    In particular, for all $\sigma\in G_w$, $$y\delta^{-1}\chi_\cyc^{1-k/2}(\sigma)e_2\equiv ye_2\pmod\varpi.$$
    Our proof for part (b) tells us that we must have $y\equiv 0\mod\varpi$. This in turn implies that
    $$x\delta\chi_\cyc^{k/2}(\sigma)\equiv xe_1\pmod\varpi\ \text{for all }\sigma\in G_{K_w}.$$
    As $0<k/2<(p-1)$, we have once again $\delta\chi_\cyc^{k/2}(I_w)\not\subset 1+p\Zp$. Therefore, $H^0(I_w,\fF/\fo(\delta\chi_\cyc^{k/2}))=0$, which implies that $x\equiv 0\mod\varpi$. Hence, we conclude that $H^0(K_w,A)=0$, which implies the assertion of part (a).
\end{proof}

\begin{prop}\label{prop:control}
    Suppose $A$ satisfies assumptions \eqref{ass:tors} and \eqref{ass:tama}. Let $\star,\bullet \in \{0,\emptyset\}$. The restriction map on cohomology groups induces an isomorphism $\Phi_n:\Sel^{\star,\bullet}(K_n,A) \to \Sel^{\star,\bullet}(K_\infty,A)^{\Gamma_n}$ for all $n\in \Z_{\ge 0}$.
\end{prop}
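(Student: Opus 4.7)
The plan is to execute the standard Mazur-style control argument via the snake lemma. Consider the natural commutative diagram with exact rows whose top row exhibits $\Sel^{\star,\bullet}(K_n,A)$ as the kernel of the localization map from $H^1(K_{n,\Sigma},A)$ to the direct product of local quotients
\[
\prod_{w\in S_n}\frac{H^1(K_{n,w},A)}{H^1_\bff(K_{n,w},A)}\times\frac{H^1(K_{n,v},A)}{H^1_\star(K_{n,v},A)}\times\frac{H^1(K_{n,\bar v},A)}{H^1_\bullet(K_{n,\bar v},A)},
\]
and whose bottom row is the analogous sequence for $K_\infty$ after taking $\Gamma_n$-invariants. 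The vertical maps are given by restriction: $\Phi_n$ on the Selmer groups, $\psi_n$ on the middle Galois cohomology, and $\lambda_n$ on the products of local quotients. It suffices to show that $\psi_n$ is an isomorphism and that $\lambda_n$ is injective, after which a diagram chase yields the claim.

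\emph{Middle column.} Since $K_\infty/K$ is unramified outside the primes above $p$, which lie in $\Sigma$, we have $K_{\infty,\Sigma}=K_\Sigma$ as subfields of $\Qbar$. The inflation--restriction sequence gives $\ker\psi_n=H^1(\Gamma_n,A^{G_{K_\infty}})$ and $\coker\psi_n\hookrightarrow H^2(\Gamma_n,A^{G_{K_\infty}})$. Since $\Gamma_n\simeq\Z_p$ has $p$-cohomological dimension one, the $H^2$-term vanishes. Moreover, $A^{G_{K_\infty}}\subseteq A^{G_{K_{\infty,v}}}=0$ by Lemma~\ref{lem:torsion}(a) under \eqref{ass:tors}. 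Hence $\psi_n$ is an isomorphism.

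\emph{Local columns.} Fix a finite place $w\in\Sigma_n$ and choose $w'\mid w$ in $K_\infty$; write $\Gamma_{n,w'}$ for the decomposition group of $w'$ in $\Gamma_n$. By Shapiro's lemma, the $w$-component of $\lambda_n$ identifies with the natural map
\[
\frac{H^1(K_{n,w},A)}{H^1_{\loc_w}(K_{n,w},A)}\longrightarrow\left(\frac{H^1(K_{\infty,w'},A)}{H^1_{\loc_w}(K_{\infty,w'},A)}\right)^{\Gamma_{n,w'}}.
\]
For $w\mid p$: if $\loc_w=\emptyset$ the target vanishes; if $\loc_w=0$ the map is simply the restriction $H^1(K_{n,w},A)\to H^1(K_{\infty,w'},A)$, whose kernel $H^1(\Gamma_{n,w'},A^{G_{K_{\infty,w'}}})$ is zero by Lemma~\ref{lem:torsion}(a). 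For $w\in S_n$ the condition is Bloch--Kato finite: a standard local analysis of the unramified quotient (\textit{cf.} \cite[proof of Proposition~2.4]{greenberg-cetraro}) identifies the kernel of the quotient map as a subquotient of $B_w$, whose order is prime to $p$ by \eqref{ass:tama}; since the source is $p$-primary, the kernel must vanish.

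\emph{Diagram chase.} Injectivity of $\Phi_n$ follows from injectivity of $\psi_n$. For surjectivity, given $x\in\Sel^{\star,\bullet}(K_\infty,A)^{\Gamma_n}$, use $\psi_n^{-1}$ to lift $x$ to $y\in H^1(K_{n,\Sigma},A)$; the image of $y$ in each local quotient maps to zero under $\lambda_n$ and hence equals zero by injectivity, so $y\in\Sel^{\star,\bullet}(K_n,A)$. The main technical obstacle lies in the local analysis at $w\in S_n$: while the conditions $\{0,\emptyset\}$ at primes above $p$ are transparent, the Bloch--Kato finite condition at bad primes requires careful bookkeeping of the inertia action and the finite quotient $B_w$, and hypothesis \eqref{ass:tama} is precisely the Tamagawa-type input that ensures the relevant obstruction is concentrated away from $p$-torsion.
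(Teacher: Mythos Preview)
Your proposal is correct and follows essentially the same snake-lemma control argument as the paper. The only substantive difference is at the primes $w\mid N$: you invoke a ``standard local analysis'' citing Greenberg to bound the kernel by a subquotient of $B_w$, whereas the paper isolates this as two preparatory lemmas---Lemma~\ref{lem:bad-reduction} (finiteness of $A^{G_F}$ for $F/K_w$ finite unramified, proved via Atkin--Lehner local theory) and Lemma~\ref{lem:tamagawa} (which uses that finiteness to show $(A^{G_{K_{\infty,w'}}})_\div\subseteq(\gamma_{w'}-1)A^{G_{K_{\infty,w'}}}$, whence $H^1(\Gamma_{n,w'},A^{G_{K_{\infty,w'}}})$ is a quotient of $B_w$)---and also records the finitely/infinitely decomposed dichotomy explicitly. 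Your citation absorbs these steps, but be aware that the finiteness input is not entirely automatic in the higher-weight setting and is exactly what Lemma~\ref{lem:bad-reduction} supplies.
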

While control theorems of this flavor are standard (see, e.g., \cite[Lemma 4.3]{matar-supersingular}, \cite[Theorem 4.1]{lei-lim-muller23} and \cite[Lemma 2.3]{ponsinet20}), we give a proof here for the reader's convenience. We begin with two lemmas.
\begin{lem}\label{lem:bad-reduction}
    Let $w\mid N$ be a place of $K$ and $F$ be an unramified finite extension of $K_w$. Then $A^{G_F}$ is finite.
\end{lem}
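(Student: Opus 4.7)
The plan is to reduce the finiteness of $A^{G_F}$ to showing $V^{G_F}=0$. This suffices because from the short exact sequence $0 \to T \to V \to A \to 0$ of $\fo[G_F]$-modules, the $\fo$-corank of $A^{G_F}$ equals $\dim_\fF V^{G_F}$; combined with the fact that $A$ is cofinitely generated over $\fo$, $V^{G_F}=0$ forces $A^{G_F}$ to be a finite $\fo$-module.

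First, I would reduce the problem to a question about $\Q_\ell$. Since $w \mid N$ and $\gcd(N,pd_K)=1$, the place $w$ lies over a rational prime $\ell \ne p$ which is unramified in $K$, so $K_w/\Q_\ell$ is unramified. Combined with $F/K_w$ unramified, this yields $F/\Q_\ell$ unramified, so $I_F = I_{\Q_\ell}$ and consequently $V^{I_F} = V^{I_\ell}$. Thus any $v \in V^{G_F}$ lies in the one- or zero-dimensional subspace $V^{I_\ell}$, on which the Frobenius of $F$ acts by $\alpha^{[F:\Q_\ell]}$ for $\alpha$ the Frobenius eigenvalue on $V^{I_\ell}$.

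Next, I would invoke the local-global compatibility of the Deligne representation $V_f$ with the local Langlands correspondence at $\ell \mid N$ (due to Carayol and later refinements): when $\ell^2 \mid N$, the local automorphic type forces $V_f^{I_\ell}=0$, and we are done; when $\ell \| N$, $V_f$ is of Steinberg type at $\ell$, so $V_f^{I_\ell}$ is one-dimensional with arithmetic Frobenius acting by $a_\ell(f)$. Since $f$ has trivial nebentypus, Atkin--Lehner theory gives $a_\ell(f) = \pm \ell^{(k-2)/2}$, in particular $|a_\ell(f)| = \ell^{(k-2)/2}$ under any archimedean embedding. After the central critical twist by $\chi_{\cyc}^{k/2}$, the Frobenius eigenvalue $\alpha$ on $V^{I_\ell}$ becomes an algebraic number of complex absolute value $\ell^{k-1}$ (up to the sign in the twist convention). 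For $k \ge 2$ and $\ell \ge 2$ this absolute value is never $1$, so $\alpha^{[F:\Q_\ell]} \ne 1$, and thus $V^{G_F} = 0$.

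The main obstacle is the precise citation of local-global compatibility at the bad primes, which is classical but needs care with conventions; once that is in place, the Weil bound $|a_\ell(f)| = \ell^{(k-2)/2}$ does all the work, and the twist moves this absolute value away from $1$ uniformly in the ordinary and non-ordinary regimes. I expect the written proof to be quite short once the relevant references are invoked.
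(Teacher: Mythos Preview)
Your argument is correct and follows the same essential strategy as the paper: reduce the finiteness of $A^{G_F}$ to the vanishing of $V^{G_F}$, then use local--global compatibility (Carayol) together with Atkin--Lehner to show that the Frobenius eigenvalue on the one-dimensional space $V^{I_\ell}$ is not a root of unity.

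The difference lies in the reduction step. You invoke directly the identity $\corank_\fo A^{G_F} = \dim_\fF V^{G_F}$ (which follows, e.g., from the fact that inverse limits commute with taking invariants, so the Tate module of $A^{G_F}$ is $T^{G_F}$). The paper instead argues more concretely: assuming $A^{G_F}$ infinite, it uses the self-duality pairing $T\times T\to\fo(1)$ and an argument modeled on Silverman's Weil-pairing trick (if two independent $p^n$-torsion points were $G_F$-fixed, then $\mu_{p^n}\subset F$) to manufacture an explicit nonzero element of $T^{G_F}\subset V^{G_F}$. Your route is shorter; the paper's is self-contained and highlights the role of the self-dual pairing.

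One caveat on your eigenvalue computation: the paper, citing Nekov\'a\v{r}, finds that $\Frob_\ell$ acts on $V^{I}$ as $-\ell\varepsilon_{f,\ell}$, of complex absolute value $\ell$, not $\ell^{k-1}$. Your value arises from combining the geometric-Frobenius identification $\Frob_\ell|_{V_f^{I_\ell}}=a_\ell$ with the arithmetic-Frobenius effect of the twist; with consistent conventions the exponent is $\pm1$. Since all candidate values have absolute value $\ne 1$, your conclusion $V^{G_F}=0$ stands, but you should align the conventions when writing this up.
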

\begin{proof}
    For the sake of contradiction, suppose instead that there exist infinitely many $n\in \Z_{\ge 0}$ such that $x_n\in A^{G_F}$ and the annihilator ideal of $x_n$ is $(\varpi^n)\subseteq \fo$; we extend $x_n$ to all of $n\in \Z_{\ge 0}$ by setting $x_{n-1} = \varpi x_n$ if $x_n$ is defined and $x_{n-1}$ is not. Let $e$ be the ramification index of $\fF/\Q_p$ and put $y_n = x_{en}$. Denote by $(-,-)$ the pairing from \cite[Proposition 3.1.(2)]{nekovar92} on $T\times T$ and also $A[p^n]\times A[p^n]$ for $n\in \Z_{\ge 0}$ (note that since $T$ is self-dual, $A[p^n]\simeq (T/p^n)^\vee(1)$). Then we have $(p^{m-n}y_m,y_n) \in \mu_{p^n}$ for all $n,m\in \Z_{\ge 0}, m>n$.
		
		For $m,n$ as above, denote by $D_{m,n}$ the $\fo/p^n$-submodule generated by $p^{m-n}y_m$ and $y_n$ in $A[p^n]$. If $D_{m,n} = A[p^n]$, then the same argument as \cite[Corollary III.8.1.1]{silverman} shows that $\mu_{p^n}\subset F$, which cannot happen for $n\gg 0$. Thus there exists $n_0\in \Z_{\ge 0}$ such that for all $n\ge n_0$, $y_{n_0} = a_np^{n-n_0}y_n$ for some $a_n \in \fo^\times$. Put
		\begin{align*}
			\tilde{y}_n = \begin{cases}
				p^{n_0-n}y_{n_0},& \text{ if }n\le n_0;\\
				a_n y_n, & \text{ if }n>n_0.
			\end{cases}
		\end{align*}
		It follows from our construction that $(\tilde{y}_n)_{n\ge 0} \in \plim_n A[p^n]^{G_F}$; thus we have a nonzero element in $T^{G_F}$, and hence in $V^{G_F}$.
		
		To derive a contradiction, we will now recall some property of the local Galois representation at the bad prime $w$; we thank Ming-Lun Hsieh for explaining this to us. Denote by $I\subset G_F$ the inertia subgroup, and recall $V_f = V(-k/2)$ is Deligne's cohomological Galois representation. Since $w\nmid p$, taking the $I$-invariant commutes with the $p$-cyclotomic twists. Note also that, with $\ell$ being the rational prime under $w$, $I\subset G_{F}\subset G_{\Q_\ell}$ is the full inertia group of $\ell$ as $K$ is unramified at places that divide $N$. Then, as is standard, we have $V^{I} = V_f^I(-k/2)$ is either trivial or one-dimensional. In the latter case, we have $\Frob_\ell$ acts on $V^I$ as $-\ell\varepsilon_{f,\ell}$, following Atkin--Lehner \cite{atkin-lehner} and Carayol \cite{carayol} (see also \cite[Proposition 3.1.(4)]{nekovar92}); here $\varepsilon_{f,\ell}$ is the Atkin--Lehner sign. Therefore, no power of $\Frob_\ell$ is 1 and we have $V^{G_F} = 0$.
\end{proof}

\begin{lem}\label{lem:tamagawa}
    Let $w\mid N$ be a place of $K_n$ that is finitely decomposed in $K_\infty$, and let $w'\mid w$ be a place in $K_\infty$. Denote by $\Gamma_{n,w'}\subset \Gamma_n$ the decomposition group of $w'$. We have
    \begin{align*}
        \#\ker\left(H^1_{/\bff}(K_{n,w},A) \to 
        H^1_{/\bff}(K_{\infty,w'},A)^{\Gamma_{n,w'}}\right)
        \le b_w.
    \end{align*}
    In particular, when $p\nmid b_w$, the above kernel is trivial.
\end{lem}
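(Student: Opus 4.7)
My plan hinges on the observation that, in our setting, the Bloch--Kato finite subgroups $H^1_\bff(-, A)$ in fact \emph{vanish} at both levels $K_{n,w}$ and $K_{\infty,w'}$, which greatly simplifies the inflation--restriction analysis. First, I will show $H^1(K_{n,w}, V) = 0$ (and hence $H^1_\bff(K_{n,w}, A)=0$): since $w\nmid p$, Tate's local Euler characteristic formula gives $\dim H^1(K_{n,w}, V) = \dim H^0(K_{n,w}, V) + \dim H^0(K_{n,w}, V^*(1))$. The self-duality of $V$ together with the vanishing $V^{G_{K_{n,w}}} = 0$---proved in Lemma \ref{lem:bad-reduction} via the Atkin--Lehner/Carayol computation of $\Frob_\ell$'s eigenvalue on $V^{I_w}$---forces both summands to vanish. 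The long exact cohomology sequence then yields an injection $H^1(K_{n,w}, A) \hookrightarrow H^2(K_{n,w}, T)$, with the latter being $\fo$-torsion and hence finite by the same dimension count.

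Next, I will show $H^1_\bff(K_{\infty,w'}, A) = 0$ by exploiting the pro-coprime-to-$p$ Galois structure: since $K_{\infty,w'}$ contains the unramified $\Zp$-extension of $K_{n,w}$, the quotient $G_{K_{\infty,w'}}/I_w$ is isomorphic to the pro-coprime-to-$p$ group $\prod_{\ell'\ne p}\Z_{\ell'}$. Acting on the $p$-primary $A^{I_w}$, this group acts trivially by orbit-counting (orbits are simultaneously $p$-power and coprime-to-$p$), yielding $M := A^{G_{K_{\infty,w'}}} = A^{I_w}$. A parallel argument using semisimplicity of $\Qp$-representations of finite prime-to-$p$ groups (via Maschke) gives $H^1_\ur(K_{\infty,w'}, V) = 0$ and hence $H^1_\bff(K_{\infty,w'}, A) = 0$.

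With $H^1_\bff = 0$ at both levels, the quotients $H^1_{/\bff}$ reduce to $H^1$, so by inflation--restriction the kernel of the lemma equals $H^1(\Gamma_{n,w'}, M)$, which is finite as a subgroup of $H^1(K_{n,w}, A)$. The short exact sequence $0 \to M_\div \to M \to M/M_\div \to 0$ together with $\mathrm{cd}(\Z_p)=1$ yields the right-exact sequence $H^1(\Gamma_{n,w'}, M_\div) \to H^1(\Gamma_{n,w'}, M) \to H^1(\Gamma_{n,w'}, M/M_\div) \to 0$. The leftmost term is divisible (as a quotient of the divisible $M_\div$), so its image in the finite $H^1(\Gamma_{n,w'}, M)$ must vanish; therefore $H^1(\Gamma_{n,w'}, M) \hookrightarrow H^1(\Gamma_{n,w'}, M/M_\div)$, of order at most $|M/M_\div| = b_w$. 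The second assertion follows since $b_w$ is automatically a power of $p$ (as $A$ is $p$-primary), so $p\nmid b_w$ forces $b_w=1$. The main obstacle is the Euler characteristic computation in the first paragraph, which crucially combines the self-duality of $V$ (via Nekov\'a\v{r}'s pairing) with the Frobenius-eigenvalue structure at bad primes established in Lemma \ref{lem:bad-reduction}.
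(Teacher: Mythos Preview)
Your overall strategy is sound and close to the paper's, but your second paragraph contains a false claim. The orbit-counting argument for $A^{G_{K_{\infty,w'}}} = A^{I_w}$ is incorrect: a pro-prime-to-$p$ group can act nontrivially on a $p$-primary module (for instance, $\Z/2\Z$ acts on $\Z/p\Z$ by inversion, with orbits of size $2$). Orbits under $\hat\Z^{(p')}$ have cardinality prime to $p$, but there is no reason they should have $p$-power cardinality merely because they lie inside a $p$-group. In the situation at hand, $\Frob_\ell$ acts on $V^{I_w}$ by a nontrivial scalar (Lemma~\ref{lem:bad-reduction}), so one generally has $A^{G_{K_{\infty,w'}}}\subsetneq A^{I_w}$. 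Fortunately this equality is never used afterwards: in your third paragraph $M$ functions solely as $A^{G_{K_{\infty,w'}}}$, and $b_w=|M/M_\div|$ is precisely the paper's definition. Likewise, your ``parallel Maschke'' justification of $H^1_\bff(K_{\infty,w'},A)=0$ is vague, but this vanishing already follows from your first paragraph by taking the direct limit over the layers $K_{m,w'}$ (each $H^1_\bff(K_{m,w'},A)$ is the image of $H^1(K_{m,w'},V)=0$). So the proof stands once the second paragraph is pruned.

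Comparing with the paper: the paper does \emph{not} prove $H^1_\bff(K_{n,w},A)=0$ at finite level; it only uses $H^1_\bff(K_{\infty,w'},A)=0$ (citing Perrin-Riou) and applies the snake lemma to the diagram relating $H^1_\bff\hookrightarrow H^1\twoheadrightarrow H^1_{/\bff}$ at the two levels, yielding $\#\ker(g_{w'})\le \# H^1(\Gamma_{n,w'},A^{G_{K_{\infty,w'}}})$ directly. Your route is a genuine alternative: by invoking the self-duality $V\simeq V^*(1)$ together with Tate's local Euler characteristic, you obtain the stronger vanishing $H^1(K_{n,w},V)=0$, which both kills $H^1_\bff$ at finite level and shows $H^1(K_{n,w},A)$ is finite. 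This lets you bypass the snake diagram entirely and identify the kernel with $H^1(\Gamma_{n,w'},M)$ on the nose. The final bound is then handled the same way in both proofs (the paper argues $(A^{G_{K_{\infty,w'}}})_\div\subset(\gamma_{w'}-1)A^{G_{K_{\infty,w'}}}$ via Lemma~\ref{lem:bad-reduction}; you argue the divisible $H^1(\Gamma_{n,w'},M_\div)$ has trivial image in the finite $H^1(\Gamma_{n,w'},M)$, which is the same observation in different clothing).
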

\begin{proof}
    The proof is a variant of \cite[Lemma 3.3]{greenberg-cetraro}. Recall that by \cite[\S2.1,7, \S2.2.4]{perrin-riou:theorie-iwasawa-hauteurs}, we have $H^1_\bff(K_{\infty,w'},A) = 0$. Next, consider the commutative diagram with exact rows
    \[
        \begin{tikzcd}
            H^1_\bff(K_{n,w},A) \ar[d] \ar[r]
            & H^1(K_{n,w},A) \ar[d] \ar[r]
            & H^1_{/\bff}(K_{n,w},A) \ar[d,"g_{w'}"] \ar[r]
            & 0\\
            0 \ar[r]
            & H^1(K_{\infty,w'},A)^{\Gamma_{n,w'}} \ar[r,equal] 
            & H^1(K_{\infty,w'},A)^{\Gamma_{n,w'}}  \ar[r] & 0
        \end{tikzcd}
    \]
    By {the} snake lemma, we find
    \begin{align*}
        \#\ker(g_{w'})\le \# \ker\left(H^1(K_{n,w},A)\to H^1(K_{\infty,w'},A)^{\Gamma_{n,w'}}\right) =\#H^1(\Gamma_{n,w'},A^{G_{K_{\infty,w'}}}).
    \end{align*}
    As $\Gamma_{n,w'}\ne 0$ is pro-cyclic, if we fix a topological generator $\gamma_{w'}\in \Gamma_{n,w'}$, then
    \begin{align*}
        H^1(\Gamma_{n,w'},A^{G_{K_{\infty,w'}}}) \simeq A^{G_{K_{\infty,w'}}}/(\gamma_{w'}-1)A^{G_{K_{\infty,w'}}}
    \end{align*}
   (see \cite[Exercise 2.2]{greenberg-park-city} for example). Lemma \ref{lem:bad-reduction} tells us that $(A^{G_{K_{\infty,w'}}})_{\div}\subseteq (\gamma_{w'}-1)A^{G_{K_{\infty,w'}}}$. Recall that $ B_w = A^{G_{K_{\infty,w'}}}/\left(A^{G_{K_{\infty,w'}}}\right)_{\div}$. It follows that $A^{G_{K_{\infty,w'}}}/(\gamma_{w'}-1)A^{G_{K_{\infty,w'}}}$ is a quotient of $B_w$. Thus, $\#\ker(g_{w'})\le b_w$.
\end{proof}

\begin{proof}[Proof of Proposition \ref{prop:control}]
    We first note that the injectivity of $\Phi_n$ follows from the inflation-restriction exact sequence
    \begin{align*}
        0\to H^1(\Gamma_n,A^{G_{K_\infty}})\to H^1(K_n,A)\to H^1(K_\infty,A),
    \end{align*}
    and our assumption \eqref{ass:tors}.

    For the surjectivity, we consider the commutative diagram with exact rows
    \[
        \begin{tikzcd}[nodes={inner sep=0.1pt}] 
            \Sel^{\star,\bullet}(K_n,A) \ar[r,hook] \ar[d,"\Phi_n"] 
            & H^1(K_{\Sigma}/K_n,A) \ar[d,"h"] \ar[r] 
            & \displaystyle H^1_{/\star}(K_{n,v},A)\times H^1_{/\bullet}(K_{n,\bar{v}},A) \times \prod_{w\in S_n} H^1_{/\bff}(K_{n,w},A) \ar[d,"g"] \\
            \Sel^{\star,\bullet}(K_\infty,A)^{\Gamma_n} \ar[r,hook] 
            & H^1(K_{\Sigma}/K_\infty,A)^{\Gamma_n} \ar[r] 
            & \displaystyle \left(H^1_{/\star}(K_{\infty,v},A)\times H^1_{/\bullet}(K_{\infty,\bar{v}},A) \times  \prod_{w'\in S_\infty} H^1_{/\bff}(K_{\infty,w'},A)\right)^{\Gamma_n}.
        \end{tikzcd}
    \]
    By the inflation-restriction exact sequence and \eqref{ass:tors} again, we have $\coker(h) = 0$. Thus, by the snake lemma, the surjectivity of $\Phi_n$ would follow from $\ker(g) = 0$. Note that $g$ is given by the local maps
    \begin{align*}
        g_w: H^1_{/\heartsuit}(K_{n,w},A) \to \left(\prod_{w'\mid w} H^1_{/\heartsuit}(K_{\infty,w'},A)\right)^{\Gamma_n},
    \end{align*}
    where
    \begin{align*}
        \heartsuit = 
        \begin{cases}
            \bff & w \mid N;\\
            \star & w = v;\\
            \bullet & w = \barv.\\
        \end{cases}
    \end{align*}
    The inflation-restriction exact sequence and \eqref{ass:tors} imply that $\ker(g_w) = 0$ with $w\mid p$. It remains to show that $\ker(g_w) =0$ for $w\mid N$. For this, we enlarge $g_w$ to
    \begin{align*}
        g_w': H^1_{/\bff}(K_{n,w},A) \to \left(\prod_{w'\mid w} H^1_{/\bff}(K_{\infty,w'},A)\right)^{\Gamma_n} \hookrightarrow
        \prod_{w'\mid w} H^1_{/\bff}(K_{\infty,w'},A)^{\Gamma_{n,w'}}
    \end{align*}
    (notation as in Lemma \ref{lem:tamagawa}). We then have a dichotomy: either $w$ is infinitely decomposed in $K_\infty$, in which case $\ker(g_w') = 0$; or $w$ is finitely decomposed, in which scenario the vanishing of $\ker(g_w')$ remains valid thanks to Lemma \ref{lem:tamagawa} and \eqref{ass:tama}.
\end{proof}

\section{Vanishing of $\Sel^\BDP$ and its consequences}
\label{sec:bdp}

Recall that $z_{f,K}\in H^1_\bff(K,T)$ is the generalized Heegner class over $K$; we use the same letter for its image in $H^1_\bff(K,A)$. The following hypotheses are in effect:
\begin{align}\tag{Loc.prim.}\label{ass:locdiv}
    \loc_v(z_{f,K}) \notin \varpi H^1_\bff(K_v,T);
\end{align}
\begin{align}\tag{Sel.}\label{ass:sel}
    \Sel(K,A) = \fF/\fo\cdot z_{f,K}.
\end{align}
Note also that, by local Tate duality, the assumption \eqref{ass:tors} implies
\begin{align}\tag{Tors'.}\label{ass:tors2}
    H^2(K_{\barv},T) = 0.
\end{align}

The proof of the following proposition is inspired by \cite[proof of Theorem 3.2]{matar-supersingular}.
\begin{prop}\label{prop:bdp}
    Assume \eqref{ass:tors}, \eqref{ass:tama}, \eqref{ass:locdiv} and \eqref{ass:sel}. For all $n\in \Z_{\ge 0}\cup \{\infty\}$,
    \begin{align*}
        \Sel^\BDP(K_n,A) = 0.
    \end{align*}
\end{prop}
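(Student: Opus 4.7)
The plan is to prove vanishing at the bottom layer $n=0$ by a Poitou--Tate argument anchored by $z_{f,K}$, and then propagate to all $K_n$ and $K_\infty$ via Proposition~\ref{prop:control} and a Nakayama-type argument.

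First I would take $s \in \Sel^\BDP(K,A)$ and pair it with $z_{f,K} \in \Sel(K,T)$ using global reciprocity, $\sum_w \pair{\loc_w(s)}{\loc_w(z_{f,K})}_w = 0$. The terms at bad primes $w\in S_K$ vanish because the Bloch--Kato subgroups are mutually orthogonal at primes $w\nmid p$, with \eqref{ass:tama} clearing away any Tamagawa-type discrepancy; the term at $\bar v$ is zero by the strict BDP condition $\loc_{\bar v}(s)=0$. What remains is $\pair{\loc_v(s)}{\loc_v(z_{f,K})}_v = 0$. The Bloch--Kato dimension formula gives $\dim_\fF H^1_\bff(K_v,V) = 1$, and \eqref{ass:tors} ensures that $H^1(K_v,T)$ is $\fo$-torsion-free, so $H^1_\bff(K_v,T) \cong \fo$. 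By \eqref{ass:locdiv}, $\loc_v(z_{f,K})$ is a generator, and perfect orthogonality under local Tate duality then forces $\loc_v(s) \in H^1_\bff(K_v,A)$. Combining with $\loc_{\bar v}(s)=0$ and the Bloch--Kato conditions at bad primes, $s$ lies in $\Sel(K,A)$, so by \eqref{ass:sel} we may write $s = c \cdot z_{f,K}$ for some $c \in \fF/\fo$.

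To conclude $c=0$, I would invoke complex conjugation: the Euler system sign relation (E3) at conductor $1$, together with the fact that $\cores_{K[1]/K}$ kills the $\Gal(K[1]/K)$-action, gives $\tau \cdot z_{f,K} = w_f z_{f,K}$ in $H^1(K,T)$. Localizing yields $\loc_{\bar v}(z_{f,K}) = w_f \cdot \tau\loc_v(z_{f,K})$. Since $\tau : H^1_\bff(K_v,T) \iso H^1_\bff(K_{\bar v},T)$ is an isomorphism of free rank-one $\fo$-modules, $\loc_{\bar v}(z_{f,K})$ is likewise a generator of $H^1_\bff(K_{\bar v},T) \cong \fo$. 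Under \eqref{ass:tors} and the resulting vanishing $H^2(K_{\bar v},T) = 0$ (from local Tate duality combined with the self-duality of $T$), one has an identification $H^1_\bff(K_{\bar v},A) \cong H^1_\bff(K_{\bar v},T) \otimes_\fo \fF/\fo$. The equation $\loc_{\bar v}(s) = c \cdot \loc_{\bar v}(z_{f,K}) = 0$ then forces $c=0$, establishing $\Sel^\BDP(K,A) = 0$.

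Finally I would set $M = \Sel^\BDP(K_\infty,A)^\vee$, which is finitely generated over $\Lambda$ as a quotient of the dual of $H^1(K_\Sigma/K_\infty,A)$. By Proposition~\ref{prop:control}, $M_\Gamma \cong \Sel^\BDP(K,A)^\vee = 0$, so $M = (\gamma-1)M$. Choosing finitely many generators $m_i$ and expressing $m_i = (\gamma-1)\sum_j a_{ij}m_j$ shows that $M$ is annihilated by $\det(I - (\gamma-1)A) \in \Lambda$, an element whose constant term is $1$ and hence a unit of $\Lambda$; thus $M=0$. Applying Proposition~\ref{prop:control} once more gives $\Sel^\BDP(K_n,A) \cong \Sel^\BDP(K_\infty,A)^{\Gamma_n} = 0$ for every finite $n$. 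The main technical obstacle will be the Bloch--Kato bookkeeping at $\bar v$---specifically the clean identification $H^1_\bff(K_{\bar v},A) \cong H^1_\bff(K_{\bar v},T) \otimes \fF/\fo$ and the compatibility of the complex-conjugation transfer with the BK filtration---together with the orthogonality statement at bad primes, where \eqref{ass:tama} enters essentially.
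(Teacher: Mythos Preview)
Your proof is correct and tracks the paper's argument closely: both reduce to $n=0$ via control-plus-Nakayama, use global duality with $z_{f,K}$ and local indivisibility at $v$ to force any BDP class into $\Sel(K,A)$, and then kill it via complex conjugation transferring indivisibility to~$\bar v$. The only cosmetic difference is that the paper packages the duality step via a Cassels--Poitou--Tate exact sequence rather than summing local pairings directly; note also that \eqref{ass:tama} is not needed for orthogonality at the bad primes over $K$ itself---it enters only through Proposition~\ref{prop:control}.
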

\begin{proof}
    By Proposition \ref{prop:control}, we have $\Sel^\BDP(K_n,A) = \Sel^\BDP(K_\infty,A)^{\Gamma_n}$. As $\Sel^\BDP(K_\infty,A)$ is co-finitely generated \cite[Proposition 3]{greenberg:representations}, by Nakayama's lemma it suffices to prove $\Sel^\BDP(K,A) = 0$. Equivalently, we show that
    \begin{align}\label{eq:injectivity}
        \Sel^{\emptyset,\emptyset}(K,A)\to H^1(K_{\barv},A)
    \end{align}
    is injective.

    We start with a bisection of $\Sel^{\emptyset,\emptyset}(K,A)$. We have the following Cassels--Poitou--Tate exact sequence (see \cite[p.~119]{washington:flt} for a detailed discussion over $\Q$; the argument can be applied to general number fields with minor adjustments):
    \begin{align*}
        0\to \Sel(K,A)\to \Sel^{\emptyset,\emptyset}(K,A)\xrightarrow{\psi} H^1_{/\bff}(K_v,A)\times H^1_{/\bff}(K_{\barv},A)\xrightarrow{\theta} \Sel(K,T)^\vee,
    \end{align*}
    where $\Sel(K,T) = \plim_n \Sel(K,A)[p^n]$. This gives rise to the following short exact sequence:
    \begin{align}\label{eq:bisection}
        0\to \Sel(K,A)\to \Sel^{\emptyset,\emptyset}(K,A)\xrightarrow{\psi} \ker(\theta)\to 0.
    \end{align}
    Consequently, the injectivity of \eqref{eq:injectivity} follows from the following two claims:
    \begin{enumerate}
        \item[(a)] if $z\in \Sel^{\emptyset,\emptyset}(K,A)$ is such that $\loc_{\barv}(z) = 0$, then $z\in \Sel(K,A)$;
        \item[(b)] the localization map $\Sel(K,A)\to H^1(K_{\barv},A)$ is injective.
    \end{enumerate}
    We begin with the proof of (b). First, note that the localization map can be decomposed as
    \begin{align}
        \loc_{\barv}: \Sel(K,A)\simeq \fF/\fo\cdot z_{f,K}\xrightarrow{\psi_{0,\barv}} H^1_\bff(K_{\barv},A)\hookrightarrow H^1(K_{\barv},A),
    \end{align}
   where $\psi_{0,\barv}$ is the built-in morphism from the definition of $\Sel(K,A)$, and the first isomorphism is guaranteed by \eqref{ass:sel}. Suppose $z \in \ker(\psi_{0,\barv})$ is of the form $z_{f,K}\otimes a/\varpi^n$ for some $a\in \fo^\times$ and $n\in \Z_{\ge 0}$. By \eqref{ass:tors}, we have an exact sequence
    \begin{align*}
        0\to H^1_\bff(K_\barv,T) \to H^1_\bff(K_\barv,V)\to H^1_\bff(K_\barv,A).
    \end{align*}
    Hence, we may and will regard $H^1_{\bff}(K_{\barv},T)$ as a subgroup of $H^1_{\bff}(K_{\barv},V)$. Since $\psi_{0,\barv}(z) = \loc_{\barv}(z_{f,K})\otimes a/\varpi^n = 0\in H^1_{\bff}(K_{\barv},A)$, we have $\loc_{\barv}(z_{f,K}) \in \varpi^n H^1_{\bff}(K_{\barv},T)$. Now, by applying the complex conjugation to \eqref{ass:locdiv}, we find $\loc_\barv(z_{f,K})\notin \varpi H^1_\bff(K_{\barv},T)$, which forces $n=0$ and hence $z = 0$.

    As for (a), we note that by \eqref{ass:locdiv} and \eqref{ass:sel}, we have $\Sel(K,T) = \plim_n \Sel(K,A)[p^n] = \fo\cdot z_{f,K}$, and that, by definition, $\ker(\theta)\subseteq H^1_{/\bff}(K_v,A)\times H^1_{/\bff}(K_{\barv},A)$ is orthogonal to $\im(\theta^\vee) = \im(\Sel(K,T))\subseteq H^1_\bff(K_v,T)\times H^1_\bff(K_{\barv},T)$. Now, if $z\in \Sel^{\emptyset,\emptyset}(K,A)$ is such that $\loc_{\barv}(z) = 0$, then $\psi(z)\in \ker(\theta)$ is also orthogonal to $H^1_\bff(K_{\barv},T)$, and thus to 
    \begin{align*}
        0\times H^1_\bff(K_{\barv},T) + \im(\Sel(K,T)) \subseteq H^1_\bff(K_v,T)\times H^1_\bff(K_{\barv},T).
    \end{align*}
    By \eqref{ass:tors}, $H^1_\bff(K_v,T)\times H^1_\bff(K_{\barv},T)$ is torsion-free and thus isomorphic to $\fo^2$ (see \cite[Corollary 3.8.4]{bloch-kato:l-fcts-tamagawa-numbers}). Now, \eqref{ass:locdiv} tells us that the projection of $z_{f,K}$ in $H^1_\bff(K_v,T)$ is a generator, whereby $0\times H^1_\bff(K_{\barv},T) + \im(\Sel(K,T)) = H^1_\bff(K_v,T)\times H^1_\bff(K_{\barv},T)$. This shows $\psi(z) = 0$ and hence $z\in \Sel(K,A)$.
\end{proof}
\begin{rem}\label{rem:matar3.2}
    Note that we proved that
    \begin{enumerate}
        \item[(i)] $\Sel(K,A)\to H^1_\bff(K_{\barv},A)$ is an isomorphism;
        
        \item[(ii)] $\ker(\theta)\simeq \fF/\fo$, as its Pontryagin dual is $\big(H^1_\bff(K_v,T)\times H^1_\bff(K_{\barv},T)\big)/\im(\Sel(K,T))$, which is isomorphic to $\fo$ by \eqref{ass:locdiv}. In particular, $\Sel^{\emptyset,\emptyset}(K,A)\simeq (\fF/\fo)^2$ by \eqref{eq:bisection}.
    \end{enumerate}
    Additionally, as one can check, our proof works with $v,\barv$ switched under the same set of assumptions.
\end{rem}

\begin{rem}\label{rem:BDP}
  We discuss a possible alternative proof of Proposition~\ref{prop:bdp} utilizing the Iwasawa main conjecture that relates the BDP $p$-adic $L$-function of \cite{bdp:generalized,miljan,brooks:shimura-curves-l-fcts} (which we denote by $L_p^\BDP\in\widehat{\fo^\mathrm{ur}}[[\Gamma]]$, where $\widehat{\fo^\mathrm{ur}}$ denotes the ring of integers of the completion of the maximal unramified extension of $\fF$) to the BDP Selmer group. Throughout this remark, we assume that the following inclusion holds
\begin{equation}\label{eq:IMC}
\left(L_p^\BDP\right)^2\in \Char_\Lambda\Sel^\BDP(K_\infty,A)^\vee\otimes \widehat{\fo^\mathrm{ur}}.
 \end{equation}
In the ordinary case, this is a consequence of \cite[Theorem~3.5]{longo-vigni-kyoto} under mild hypotheses; see the discussion in \cite[Remark~13]{HL23}, see also \cite[Theorem B]{BCK} and \cite{sweeting} for more recent developments. When $f$ is non-ordinary at $p$, $k=2$ and $N^-=1$, \eqref{eq:IMC} has been studied in \cite{lei-zhao}, which built on the prior work of Kobyashi--Ota \cite{kobayashi-ota}.

One can check, using the interpolation formula for $L_p^\BDP$ at the trivial character (see \cite[Theorem~7.2.4]{JLZ}, \cite[Theorem~5.13]{bdp:generalized} and \cite[Theorem~1.1]{brooks:shimura-curves-l-fcts}),
that $L_p^\BDP$ is a unit under an appropriate hypothesis on the image of the $z_{f,K}$ under the Bloch--Kato logarithm map.
Consequently, \eqref{eq:IMC} implies that $\Sel^\BDP(K_\infty,A)^\vee$ is a pseudo-null $\Lambda$-module. However,  $\Sel^\BDP(K_\infty,A)^\vee$ does not contain any non-trivial pseudo-null submodule (see \cite[Lemma~3.4]{LMX}). Therefore, we deduce $\Sel^\BDP(K_\infty,A)=0$. If \eqref{ass:tama} also holds, we can combine the last equation with the exact control theorem Proposition~\ref{prop:control} to conclude that $\Sel^\BDP(K_n,A)=0$ for all $n$.
\end{rem}

\begin{cor}\label{cor:bdp-sha-trivial}
    Suppose that the assumptions of Proposition \ref{prop:bdp} hold. Then, for all $n\in \Z_{\ge 0}$, the map $\Sel(K_n,A)\to H^1_\bff(K_{n,w},A)$ is an isomorphism for $w\mid p$. Consequently,
     \begin{enumerate}
         \item $\corank_\fo(\Sel(K_n,A)) = p^n$;
         \item the Bloch--Kato--Shafarevich--Tate group $\Sha_{\BK}(K_n,A) = \Sel(K_n,A)/\Sel(K_n,A)_\div = 0$.
     \end{enumerate}
\end{cor}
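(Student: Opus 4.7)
The plan is to first establish the isomorphism $\loc_w \colon \Sel(K_n, A) \to H^1_\bff(K_{n, w}, A)$ for each prime $w \mid p$ of $K_n$, and then obtain (1) and (2) as formal consequences. For injectivity of $\loc_\barv$, the kernel is precisely $\Sel^{\bff, 0}(K_n, A)$, which lies inside $\Sel^{\emptyset, 0}(K_n, A) = \Sel^\BDP(K_n, A) = 0$ by Proposition~\ref{prop:bdp}. The case $w = v$ is handled symmetrically using the vanishing $\Sel^{0, \emptyset}(K_n, A) = 0$ noted in Remark~\ref{rem:matar3.2}.

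For surjectivity, I would apply the nine-term Poitou--Tate exact sequence to the Selmer structure $L = (\emptyset, 0)$, whose Tate dual is $L^\perp = (0, \emptyset)$. Using the simultaneous vanishing of $\Sel^{\emptyset, 0}(K_n, A) = 0$ and of its compact analog $\Sel^{0, \emptyset}(K_n, T) = \plim_m \Sel^{0, \emptyset}(K_n, A)[p^m] = 0$, the sequence collapses to a canonical isomorphism
\begin{align*}
H^1(K_{n, \Sigma}, A) \iso H^1(K_{n, \barv}, A) \oplus \bigoplus_{w \in S_n} H^1(K_{n, w}, A)/H^1_\bff(K_{n, w}, A).
\end{align*}
Restricting to classes satisfying the Bloch--Kato condition at $\barv$ and at every place of $S_n$ yields an isomorphism $\loc_\barv \colon \Sel^{\emptyset, \bff}(K_n, A) \iso H^1_\bff(K_{n, \barv}, A)$. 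Since $\Sel(K_n, A) \subseteq \Sel^{\emptyset, \bff}(K_n, A)$ by definition, the surjectivity of $\loc_\barv$ on $\Sel(K_n, A)$ is equivalent to the equality $\Sel(K_n, A) = \Sel^{\emptyset, \bff}(K_n, A)$.

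The main obstacle will be establishing this equality, that is, showing that any class in $\Sel^{\emptyset, \bff}(K_n, A)$ automatically satisfies the Bloch--Kato condition at $v$. By the Poitou--Tate change-of-conditions exact sequence for $(\bff, \bff) \subseteq (\emptyset, \bff)$, this reduces to the corank identity $\corank_\fo \Sel(K_n, A) = p^n$, matching $\corank_\fo H^1_\bff(K_{n, \barv}, A) = [K_{n, \barv}:\Q_p] = p^n$ (using that $p$ splits in $K$ and $\barv \mid p$ is totally ramified in $K_\infty/K$). The bound $\le$ is free from injectivity; for the complementary lower bound, I would pursue a global Euler characteristic / Greenberg--Wiles argument for self-dual Selmer structures, leveraging hypotheses \eqref{ass:tors} and \eqref{ass:tama} to eliminate archimedean and tame contributions, combined with the symmetric iso $\Sel^{\bff, \emptyset}(K_n, A) \iso H^1_\bff(K_{n, v}, A)$. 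Once both coranks agree, any sub-$\fo$-module of $H^1_\bff(K_{n, \barv}, A) \simeq (\fF/\fo)^{p^n}$ of full corank is automatically the full group.

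With the isomorphism $\loc_w$ in hand, consequence (1) is immediate from $\corank_\fo H^1_\bff(K_{n, w}, A) = p^n$ above. For (2), $H^1_\bff(K_{n, w}, A)$ is divisible (being the image of the $\fF$-vector space $H^1_\bff(K_{n, w}, V)$ under $V \twoheadrightarrow A$), and the isomorphism transfers divisibility to $\Sel(K_n, A)$; therefore $\Sha_\BK(K_n, A) = \Sel(K_n, A)/\Sel(K_n, A)_\div = 0$.
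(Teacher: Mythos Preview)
Your injectivity argument and the derivations of (1) and (2) are correct and essentially match the paper's. The problem is in your surjectivity argument, which takes an unnecessary detour and is left incomplete.

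The paper uses a single Cassels--Poitou--Tate exact sequence
\[
0 \longrightarrow \Sel^{\bff,0}(K_n,A) \longrightarrow \Sel(K_n,A) \longrightarrow H^1_\bff(K_{n,\barv},A) \longrightarrow \Sel^{0,\emptyset}(K_n,T)^\vee,
\]
and you already have in hand the very ingredient that makes this sequence do all the work: $\Sel^{0,\emptyset}(K_n,T) = \plim_m \Sel^{0,\emptyset}(K_n,A)[p^m] = 0$, which you correctly derived from Proposition~\ref{prop:bdp} applied with $v,\barv$ swapped. That vanishing kills the rightmost term and gives surjectivity of $\loc_\barv$ on $\Sel(K_n,A)$ directly.

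Instead, you expend the vanishing of $\Sel^{0,\emptyset}(K_n,T)$ to obtain the isomorphism $\Sel^{\emptyset,\bff}(K_n,A)\iso H^1_\bff(K_{n,\barv},A)$, and then reduce to proving $\Sel(K_n,A)=\Sel^{\emptyset,\bff}(K_n,A)$ via a corank comparison. The upper bound is free, but for the lower bound $\corank_\fo\Sel(K_n,A)\ge p^n$ you only gesture at a Greenberg--Wiles / Euler characteristic argument without carrying it out. It is not clear that such a formula, applied to the self-dual structure $(\bff,\bff)$ and the stated hypotheses alone, produces this lower bound without extra input (e.g., nontriviality of norm-compatible Heegner classes at level $n$). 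So as written there is a genuine gap at exactly this step, and the whole detour is avoidable: the four-term sequence above gives both injectivity and surjectivity from the two BDP vanishings you already established.
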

\begin{proof}
    Without loss of generality, assume $w = \barv$. We show that there is an exact sequence
	\begin{align}\label{eq:CPT-modified}
		\Sel^{\bff,0}(K_n,A)\to \Sel(K_n,A)\xrightarrow{\alpha} H^1_\bff(K_{n,\barv},A)\xrightarrow{\beta} \Sel^{0,\emptyset}(K_n,T)^\vee.
	\end{align}
    
	Consider the following commutative diagram
	\[
		\begin{tikzcd}
			\Sel^{\bff,0}(K_n,A) \ar[r] & \Sel(K_n,A) \ar[r,"\alpha'"] \ar[dr,"\alpha"] & H^1_\bff(K_{n,v},A)\times H^1_\bff(K_{n,\barv},A) \ar[d,"\pi"] \ar[r,"\beta'"] & \Sel^{\emptyset,\emptyset}(K_n,T)^\vee \ar[d,"\iota^\vee"]\\
			&& H^1_\bff(K_{n,\barv},A) \ar[r,"\beta"] & \Sel^{0,\emptyset}(K_n,T)^\vee,
		\end{tikzcd}
	\]
	where $\pi$ is the natural projection and $\iota: \Sel^{0,\emptyset}(K_n,T)\to \Sel^{\emptyset,\emptyset}(K_n,T)$ is the natural injection. 	
	By the Cassels--Poitou--Tate exact sequence,  the first row is exact at $H^1_\bff(K_{n,v},A)\times H^1_\bff(K_{n,\barv},A)$. Suppose $z\in H^1_\bff(K_{n,\barv},A)$ belongs to $\ker(\beta)$. Put $z' = (0,z)\in H^1_\bff(K_{n,v},A)\times H^1_\bff(K_{n,\barv},A)$, then $\iota^\vee\circ\beta'(z') = 0$, so $\beta'(z')$, as a function on $\Sel^{\emptyset,\emptyset}(K_n,T)$, is orthogonal to $\Sel^{0,\emptyset}(K_n,T)$. It follows that $\beta'(z')$ is actually a linear function on $\mathrm{coker}(\iota) \subseteq H^1(K_{n,v},T)$. Since $z'$ has zero $v$-component, we also have $\beta'(z')$ vanishes on $H^1(K_{n,v},T)$. Hence,  $\beta'(z') = 0$. This shows that $z' = \alpha'(w)$ for some $w\in \Sel(K_n,A)$, and thus $z = \pi(z') = \pi\alpha'(w) = \alpha(w)$. This finishes the proof of the exactness of \eqref{eq:CPT-modified}
    
    We now show that the map $\alpha$ is an isomorphism. By \eqref{eq:CPT-modified} and Proposition \ref{prop:bdp}, $\alpha$ is injective. As for the surjectivity, invoking Proposition \ref{prop:bdp} again with $v,\barv$ switched, we have
    \begin{align*}
        \Sel^{0,\emptyset}(K_n,A) = \bigcup_{m> 0} \Sel^{0,\emptyset}(K_n,A)[p^m] = 0.
    \end{align*}
    Thus, $\Sel^{0,\emptyset}(K_n,A)[p^m] = 0$ for all $m$, whereby $\Sel^{0,\emptyset}(K_n,T) = \plim_m \Sel^{0,\emptyset}(K_n,A)[p^m] = 0$.

    Finally, we turn our attention to the ``therefore'' part. For (1), we have $\corank_\fo(H^1_\bff(K_{n,w},A)) = p^n$ by \cite[Corollary~3.8.4]{bloch-kato:l-fcts-tamagawa-numbers} and the base change property $(B_{\rm dR}\otimes V)^{G_{K_{n,w}}} = (B_{\rm dR}\otimes V)^{G_{K_w}}\otimes_{K_w}K_{n,w}$. The assertion (2) follows from the fact that $H^1(K_{n,w},A)$ is divisible (see \cite[proof of Theorem 2.9]{greenberg-park-city}).
\end{proof}

\section{Results on universal norms}
\label{sec:universal-norms}
Throughout \S\ref{sec:universal-norms}, we let $w\in\{v,\barv\}$ be a place of $K$. The objective of this section is to study the structure of $\Sel(K_\infty,A)$. It follows from Corollary~\ref{cor:bdp-sha-trivial} that under the assumptions of Proposition~\ref{prop:bdp},
\begin{equation}
\Sel(K_\infty,A)=\varinjlim_n \Sel(K_n,A)=\varinjlim_n H^1_\bff(K_{n,w},A).
    \label{eq:UN}
\end{equation}
We shall prove part (3) of Theorem~\ref{mainthm}, which describes the structure of this $\Lambda$-module. We do so via the theory of local universal norms à la Perrin-Riou \cite{perrinriou00} who studied modules over cyclotomic towers analogous to the second direct limit in \eqref{eq:UN}.

\begin{lem}
    There is a $\Lambda$-isomorphism
\[
\left(\varinjlim_n H^1_\bff(K_{n,w},A)\right)^\vee\simeq \varprojlim_n H^1(K_{n,w},T)/\varprojlim_n H^1_\bff(K_{n,w},T).
\]
\end{lem}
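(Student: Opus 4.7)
The plan is to identify the two sides using local Tate duality together with Pontryagin duality, and then exchange the inverse limit with the quotient via a Mittag--Leffler argument.

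First, I would invoke local Tate duality for the pair $(T,A)$: since $T$ is self-dual in the sense that $T\simeq\Hom_\fo(T,\fo(1))$, the pairing $A\times T\to \fF/\fo(1)$ is perfect, and induces for each $n$ a perfect, continuous pairing
\[
H^1(K_{n,w},A)\times H^1(K_{n,w},T)\longrightarrow \fF/\fo.
\]
Moreover, the Bloch--Kato finite submodules are each other's annihilators under this pairing, i.e.\ $H^1_\bff(K_{n,w},A)^\perp=H^1_\bff(K_{n,w},T)$ (this is \cite[Proposition 3.8]{bloch-kato:l-fcts-tamagawa-numbers}). Dualising the short exact sequence $0\to H^1_\bff(K_{n,w},T)\to H^1(K_{n,w},T)\to H^1(K_{n,w},T)/H^1_\bff(K_{n,w},T)\to 0$ via the pairing therefore yields a natural isomorphism
\[
H^1_\bff(K_{n,w},A)^\vee\iso H^1(K_{n,w},T)\,/\,H^1_\bff(K_{n,w},T),
\]
which is $\Gamma$-equivariant, compatible with the restriction maps on the left and with the corestriction maps on the right.

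Next, I pass to the limit over $n$. Pontryagin duality is exact on discrete torsion $\fo$-modules and exchanges $\varinjlim$ with $\varprojlim$, so applying it to the directed system $\{H^1_\bff(K_{n,w},A)\}_n$ (with transition maps induced by restriction) gives
\[
\Bigl(\varinjlim_n H^1_\bff(K_{n,w},A)\Bigr)^\vee\simeq\varprojlim_n\bigl(H^1(K_{n,w},T)/H^1_\bff(K_{n,w},T)\bigr),
\]
where the inverse system on the right is with respect to corestriction. Taking $\varprojlim_n$ of the short exact sequence displayed above then produces the six-term sequence
\[
0\to \varprojlim_n H^1_\bff(K_{n,w},T)\to\varprojlim_n H^1(K_{n,w},T)\to\varprojlim_n\frac{H^1(K_{n,w},T)}{H^1_\bff(K_{n,w},T)}\to {\varprojlim_n}^{\!1}H^1_\bff(K_{n,w},T).
\]
So the lemma is reduced to showing that the final $\varprojlim^1$ vanishes.

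This is the main (technical but standard) step. Under the hypothesis \eqref{ass:tors}, Lemma \ref{lem:torsion} gives $H^0(K_{n,w},A)=0$, which via the usual Bloch--Kato computation implies that each $H^1_\bff(K_{n,w},T)$ is a finitely generated (in fact free of finite rank) $\fo$-module. An inverse system of finitely generated modules over the noetherian ring $\fo$ automatically satisfies the Mittag--Leffler condition, because for each fixed $n$ the descending chain of images $\bigl\{\im\bigl(H^1_\bff(K_{m,w},T)\to H^1_\bff(K_{n,w},T)\bigr)\bigr\}_{m\ge n}$ is a chain of $\fo$-submodules of a noetherian $\fo$-module and therefore stabilises. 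Hence $\varprojlim^1_n H^1_\bff(K_{n,w},T)=0$, and combining this with the previous isomorphism yields the claimed identification. The only subtle point is checking the Mittag--Leffler/finiteness claim, but this is a direct consequence of the setup already in force.
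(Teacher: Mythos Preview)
Your overall strategy is sound, and it differs from the paper's in an instructive way, but there is a genuine error in your justification of the vanishing of $\varprojlim^1$.

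You claim that the descending chain of images $\{\im(H^1_\bff(K_{m,w},T)\to H^1_\bff(K_{n,w},T))\}_{m\ge n}$ stabilises ``because it is a chain of $\fo$-submodules of a noetherian $\fo$-module''. This is false: noetherian modules satisfy the \emph{ascending} chain condition, not the descending one. For instance, in $\fo$ itself the chain $\fo\supset\varpi\fo\supset\varpi^2\fo\supset\cdots$ never stabilises, so your argument would already fail for the constant system $M_n=\fo$ with transition maps $\varpi$. Thus Mittag--Leffler is not automatic for the reason you state.

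The conclusion $\varprojlim_n^{1}H^1_\bff(K_{n,w},T)=0$ is nevertheless true, but the correct reason is topological: each $H^1_\bff(K_{n,w},T)$ is a finitely generated $\fo$-module and hence a compact (profinite) abelian group, and for countable inverse systems of compact Hausdorff abelian groups with continuous transition maps the derived limit vanishes (see e.g.\ Jensen, \emph{Les foncteurs d\'eriv\'es de $\varprojlim$}, Th.~7.1, or Jannsen, \emph{Continuous \'etale cohomology}). With this correction your argument goes through.

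By contrast, the paper avoids the $\varprojlim^1$ issue altogether. It first passes to the limit in the Tate pairing, obtaining a perfect pairing
\[
\varinjlim_n H^1(K_{n,w},A)\times\varprojlim_n H^1(K_{n,w},T)\longrightarrow\fF/\fo,
\]
and then invokes \cite[\S2.1.8]{perrin-riou:theorie-iwasawa-hauteurs} to assert directly that $\varinjlim_n H^1_\bff(K_{n,w},A)$ and $\varprojlim_n H^1_\bff(K_{n,w},T)$ are exact annihilators of one another under this limit pairing. The desired isomorphism then drops out immediately from the perfectness, with no need to interchange $\varprojlim$ with a quotient. Your route is more elementary in that it only uses the finite-level orthogonality of \cite[Proposition~3.8]{bloch-kato:l-fcts-tamagawa-numbers} rather than Perrin-Riou's limit statement, at the cost of the extra $\varprojlim^1$ verification.
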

\begin{proof}
    By local Tate duality, there is a perfect pairing
    \[
   \langle-,-\rangle:\varinjlim_n H^1(K_{n,w},A)\times\varprojlim_n H^1(K_{n,w},T)\rightarrow\fF/\fo.
    \]
    As explained in \cite[\S2.1.8]{perrin-riou:theorie-iwasawa-hauteurs}, $\langle-,-\rangle$ induces a perfect pairing $$\left(\varinjlim_n H^1(K_{n,w},A)/\varinjlim_n H_\bff^1(K_{n,w},A)\right)\times\displaystyle\varprojlim_n H_\bff^1(K_{n,w},T)\rightarrow \fF/\fo.$$ In other words, $\displaystyle\varinjlim_n H_\bff^1(K_{n,w},A)$ and $\displaystyle\varprojlim_n H_\bff^1(K_{n,w},T)$ are the annihilator of each other under $\langle-,-\rangle$, from which the lemma follows.
\end{proof}

We introduce the notation that will be used in our proof of Theorem~\ref{mainthm}(3). Let $G$ be a topological group that is isomorphic to $\Zp$. Let $\gamma$ be a topological generator of $G$. We write $\sH_\infty(G)$ for the algebra of tempered distributions on $G$. More explicitly, it is the set of power series $f(\gamma-1)$ where $f(X)\in\Qp[[X]]$ that converges on the open unit disk. Given a real number $r>0$, we write $\sH_r(G)$ for the subset of $\sH_\infty(G)$ consisting of the elements that are $O(\log_p^r)$, i.e., those arise from power series $f(X)=\sum_{n\ge0} c_nX^n$ satisfying $\sup\{|c_n|_p/n^r\}<\infty$.
 Let $\tilde\fo$ denote the ring of integers of the completion of the maximal unramified extension of $\fF$. We write $\tilde\sH_\infty(G)=\sH_\infty(G)\otimes_{\Zp}\tilde\fo$ and  $\tilde\sH_r(G)$ is defined similarly.

Given a $G_{K_w}$-representation $M$ and a $p$-adic Lie extension $\cK$ of $K_w$, we write 
\[
\HIw(\cK,M)=\varprojlim H^1( K',M),
\]
where $M$ runs through finite extension of $K_w$ contained inside $\cK$ and the connecting maps are corestrictions.

Let $K_{\cyc,w}$ be the cyclotomic $\Zp$-extension of $K_w$ and let $\cK$ denote the compositum of $K_{\cyc.w}$ and $K_{\infty,w}$.
We write $\Gamma_\cyc=\Gal(K_{\cyc,w}/K_w)$ and $\Gamma'=\Gal(\cK/K_{\cyc,w})$, the latter of which can be identified with the Galois group of the unramified $\Zp$-extension of $K_w$.
Let $\alpha$ and $\beta$ be the two roots of $X^2-a_p(f)X+p^{k-1}$. Let $r=\max(\ord_p(\alpha),\ord_p(\beta)$. Since the Hodge--Tate weights of $T(k/2-1)$ are $0$ and $k-1$, there is a two-variable Perrin-Riou map
$$\cL_{\cK,T(k/2-1)}:\HIw(\cK,T(k/2-1))\rightarrow \tilde\fo[[\Gamma']]\widehat{\otimes}\sH_r(\Gamma_\cyc)\otimes\Dcris(V(k/2-1))$$
defined in \cite[Theorem 4.7]{LZ14} (the fact that the image consists of elements that are $O(\log_p^r)$ is proven in Proposition 4.8 of \textit{op.~cit.}).
There is a natural isomorphism 
$\HIw(\cK,T)\otimes \Zp(k/2-1)\stackrel{\sim}{\to}\HIw(\cK,T(k/2-1))$ (see \cite[Corollary A.4.4]{perrinriou95}). This induces a corresponding Perrin-Riou map
$$\cL_{\cK,T}:\HIw(\cK,T)\rightarrow \tilde\fo[[\Gamma']]\widehat{\otimes}\sH_r(\Gamma_\cyc)\otimes\Dcris(V).$$

As in \cite[Theorem A.2]{lei-zhao}, we may quotient out by the kernel of the projection map
\[
\tilde\fo[[\Gal(\cK/K_w)]]\rightarrow \tilde\fo[[\Gamma]],
\]
resulting in
\[
\cL_{K_{\infty,w},T}:\HIw (K_{\infty,w},T)\rightarrow \tilde\sH_r(\Gamma)\otimes\Dcris(V).
\]

\begin{thm}\label{thm:lambda-ranks}
Assume that \eqref{ass:tors} holds. Then:
\item[i)] the inverse limit $\HIw(K_{\infty,w},T)$ is free of rank two over $\Lambda$;
\item[ii)]    if $f$ is non-ordinary at $p$, then $\displaystyle\varprojlim_n H^1_\bff(K_{n,w},T)=0$;
\item[iii)] if $f$ ordinary at $p$, then $\HIw(K_{\infty,w},T)/\varprojlim_n H^1_\bff(K_{n,w},T)$ is free of rank one over $\Lambda$. 
\end{thm}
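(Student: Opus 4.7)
Since $p$ splits in $K$, we have $K_w\simeq\Qp$, so that $K_{\infty,w}/K_w$ is a totally ramified $\Zp$-extension. Set $H^i_\Iw(M) := \varprojlim_n H^i(K_{n,w}, M)$ for any $G_{K_w}$-coefficient module $M$. The plan is to handle each of (i), (ii), (iii) in turn by combining standard vanishing results in local Iwasawa theory with Ponsinet's analysis of universal norms in \cite{ponsinet20}.

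For part (i), I would invoke the classical structure theorem (see, e.g., \cite[\S3.2]{perrin-riou:theorie-iwasawa-hauteurs}): for a free $\fo$-module $T$ of rank $d$ carrying a continuous $G_{K_w}$-action, $H^1_\Iw(T)$ is a finitely generated $\Lambda$-module of $\Lambda$-rank $[K_w:\Qp]\cdot d = 2$, and is $\Lambda$-free precisely when it is $\Lambda$-torsion-free, which in turn is ensured by $H^0(K_{\infty,w}, T/\varpi T) = 0$. Since $T/\varpi T \cong A[\varpi]$ as $G_{K_w}$-modules and $A[\varpi]^{G_{K_{\infty,w}}} \subseteq A^{G_{K_{\infty,w}}}$, this vanishing follows from Lemma \ref{lem:torsion}(a) under \eqref{ass:tors}. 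As a byproduct, local Tate duality combined with the self-duality of $T$ yields $H^2_\Iw(T) = 0$, which will be used below.

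For part (ii), I would appeal to Ponsinet's computation of the $\Lambda$-rank of the module of universal norms in \cite{ponsinet20}: this rank equals $\dim_\fF V^+$, where $V^+$ is the Panchishkin subrepresentation of $V$, which exists precisely when $V$ is ordinary at $p$. In the non-ordinary case no such subrepresentation exists, so the rank is zero. Since $\varprojlim_n H^1_\bff(K_{n,w}, T)$ embeds as a $\Lambda$-submodule of the $\Lambda$-free module $H^1_\Iw(T)$ furnished by part (i), it is $\Lambda$-torsion-free, and the vanishing follows.

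For part (iii), I would combine the filtration $0\to T^+\to T\to T^-\to 0$ of Lemma \ref{lem:torsion}(b) with the Panchishkin-case identification (from Flach, Perrin-Riou, and \cite{ponsinet20})
\[
\varprojlim_n H^1_\bff(K_{n,w},T) = \im\bigl(H^1_\Iw(T^+)\to H^1_\Iw(T)\bigr).
\]
The long exact sequence of Iwasawa cohomology then reads
\[
H^1_\Iw(T^+) \to H^1_\Iw(T) \to H^1_\Iw(T^-) \to H^2_\Iw(T^+),
\]
where local Tate duality at each finite layer, together with the vanishing $H^0(K_{\infty,w}, A^-) = 0$ from Lemma \ref{lem:torsion}(b), gives $H^2_\Iw(T^+) = 0$. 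The quotient $H^1_\Iw(T)/\varprojlim_n H^1_\bff(K_{n,w},T)$ is therefore identified with $H^1_\Iw(T^-)$, and applying the freeness criterion of part (i) to the rank-one $\fo$-representation $T^-$, with Lemma \ref{lem:torsion}(b) again furnishing the residual vanishing, yields that this is $\Lambda$-free of rank one. The main external input throughout is Ponsinet's Panchishkin/non-Panchishkin rank dichotomy via the Perrin-Riou big exponential on $(\varphi,\Gamma)$-modules; once this is accepted, the rest of the argument is bookkeeping via the long exact sequence and Lemma \ref{lem:torsion}.
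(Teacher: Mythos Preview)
Your treatment of parts (i) and (iii) is essentially the paper's: for (i), the paper likewise invokes Perrin-Riou's structural results (phrased there as $\varprojlim_n H^1(K_{n,w},T)\simeq \Hom_\Lambda(\cZ,\Lambda)$, a reflexive and hence free $\Lambda$-module) together with Lemma~\ref{lem:torsion}(a); for (iii), the paper also identifies the quotient with $\varprojlim_n H^1(K_{n,w},T^-)$ via the long exact sequence and the vanishing $H^0(K_{\infty,w},A^-)=0$ from Lemma~\ref{lem:torsion}(b), and then reapplies the argument of (i).

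The gap is in part (ii). The local tower $K_{\infty,w}/K_w$ is a totally ramified $\Zp$-extension of $K_w=\Qp$, but it is \emph{not} the cyclotomic one: the compositum $\cK=K_{\cyc,w}\cdot K_{\infty,w}$ is a genuine $\Zp^2$-extension of $\Qp$. Perrin-Riou's big exponential map and the Panchishkin rank dichotomy you attribute to \cite{ponsinet20} are developed over the cyclotomic tower (via $(\varphi,\Gamma_\cyc)$-module techniques) and do not apply verbatim to an arbitrary ramified $\Zp$-extension of $\Qp$. This is precisely why the paper passes through the two-variable Perrin-Riou regulator of \cite{LZ14} over $\cK$, projects to the anticyclotomic variable to obtain a map $\cL_{K_{\infty,w},T}$, and then uses its interpolation property together with the vanishing computation of \cite[\S3]{perrinriou00} (valid because $V|_{G_{K_w}}$ is irreducible in the non-ordinary case) to conclude. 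Your argument omits this two-variable descent; without it, or an independent justification that the universal-norm rank calculation is insensitive to which ramified $\Zp$-extension of $\Qp$ one takes, the vanishing in (ii) is not established. The final step you give---deducing vanishing from rank zero plus torsion-freeness inside the free module $\varprojlim_n H^1(K_{n,w},T)$---is fine once the rank statement is in hand.
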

\begin{proof}
We first prove part i). By \cite[Proposition 2.1.3 and 2.1.6]{perrin-riou:theorie-iwasawa-hauteurs} and Lemma~\ref{lem:torsion}, we see that $\HIw(K_{\infty,w},T)$ is of $\Lambda$-rank 2 and isomorphic to $\Hom_{\Lambda}(\cZ,\Lambda)$ for some finitely generated $\Lambda$-module $\cZ$. We contend that $\Hom_{\Lambda}(\cZ,\Lambda)$ is a free $\Lambda$-module, which is enough to establish i). Indeed, let $\cN$ be the maximal torsion $\Lambda$-submodule of $\cZ$, then $\Hom_{\Lambda}(\cZ/\cN,\Lambda) = \Hom_\Lambda(\cZ,\Lambda)$, so we may assume $\cZ$ is torsion-free over $\Lambda$. As $\cZ$ is finitely generated, there exists a pseudo-null $\Lambda$-module $\cM$ such that
\begin{align*}
    0\to \cZ \to \Lambda^{\oplus 2} \to \cM\to 0
\end{align*}
is exact. Taking $\Hom_\Lambda(-, \Lambda)$, we have the exact sequence $0\to \Hom_\Lambda(\Lambda^{\oplus 2},\Lambda)\to \Hom(\cZ,\Lambda)\to \mathrm{Ext}^1_{\Lambda}(\cM,\Lambda)$. Thus, we are left to show $\mathrm{Ext}^1_{\Lambda}(\cM,\Lambda) = 0$; namely, any $\Lambda$-module extension $0\to \Lambda\to J\to \cM\to 0$ splits. 

Since $\cM$ is pseudo-null, there exists a non-zero element $g\in \Lambda$ that is an annihilator of $\cM$. After multiplying by $g$ on the short exact sequence of $0\to \Lambda\to J\to \cM\to 0$ and applying snake lemma, we have another exact sequence $0\to J^{g=0}\to \cM\to \Lambda/(g)$. Since $\Lambda/(g)$ has no pseudo-null submodule, we conclude that there is a canonical lifting $\cM\simeq J^{g=0}\hookrightarrow J$; hence, the extension is trivial, which concludes the proof of part i).

We turn our attention to ii). In particular, we assume that $f$ is non-ordinary at $p$.
If $\bz=(z_n)\in\varprojlim H^1_\bff(K_{n,w},T)$, then the image of $z_n$ under the Bloch--Kato dual exponential map is zero. Thus, the interpolation formula of the Perrin-Riou map (see \cite[Theorem~4.15]{LZ14}) implies that $\cL_{K_{\infty,w},T}(\bz)$ is an element of the following set:
\begin{align*}
\cA=\big\{g\in \tilde\sH_r\otimes\Dcris(V):g(\chi^{-j}\theta)\in K_n\otimes \varphi^n\Fil^j\Dcris(V),g(\theta)=0,\\
\text{for all finite characters $\theta$ of $\Gamma$ and }j\le k/2\big\}.
\end{align*}
It follows from the calculations in \cite[\S3]{perrinriou00} that when $V|_{G_{K_w}}$ is irreducible, the set $\cA=0$, which in turn implies that $\bz=0$ by Proposition 2.5.4 of \textit{op.~cit.}

We now prove iii). Since $f$ is assumed to be ordinary at $p$, $T|_{G_{K_w}}$ admits a one-dimensional sub-representation $T^+$ so that 
\[
H^1_\bff(L',T)=H^1(L',T^+)
\]
for any finite extension $L'$ of $K_w$ (see \cite[Remark~2.3.4]{perrin-riou:theorie-iwasawa-hauteurs}, which says that $
H^1_\bff(L',T)$ and $H^1(L',T^+)$ agree up to torsion; Lemma~\ref{lem:torsion} tells us that the torsion subgroups are trivial). Next, we claim that the natural map $\HIw(K_{\infty,w},T)/\HIw(K_{\infty,w},T^+)\to \HIw(K_{\infty,w},T^-)$ is an isomorphism. By dualizing, this boils down to the exactness of the following sequence
\begin{align*}
    0\to H^1(K_{\infty,w},A^+)\to H^1(K_{\infty,w},A)\to H^1(K_{\infty,w},A^-)\to 0,
\end{align*}
which in turn is guaranteed by Lemma \ref{lem:torsion}(b) (note that by local duality, $H^2(K_{\infty,w},A^+) = H^0(K_{\infty,w},T^-)^\vee$). Now, we may proceed as in the proof of i) to conclude that $\HIw(K_{\infty,w},T^-)$ is free of rank one over $\Lambda$, again using Lemma \ref{lem:torsion}(b).
\end{proof}

\begin{rem}
The calculations in this section are carried out purely locally at $p$ and do not depend on the hypotheses \eqref{ass:tama} and \eqref{ass:locdiv} assumed in Proposition~
\ref{prop:bdp}. Their sole role is in enabling the passage between the Selmer group and the local cohomology groups via \eqref{eq:UN}.
\end{rem}

\appendix

\section{Growth of Selmer groups: the case of $a_p=0$}
\label{app}
\subsection{Summary of results}
\label{sec:appendix-intro}
In this appendix, following the line of argument of Matar presented in \cite{matar-supersingular}, we give an alternative proof of Theorem~\ref{mainthm} for modular forms  $f$ with $a_p(f)= 0$ utilizing the plus and minus Selmer groups. As in \S\ref{sec:heegner}, when $N^- > 1$, we will need to assume that the weight $k$ is greater than 2, as this is the setting of \cite{magrone:generalized-heegner-cycles-quaternionic}.

We will keep the same notation from the main body of the article, with the following additions
\begin{itemize}
	\item If $R$ is an $\fo$-algebra, we write $\Lambda_R = \Lambda\otimes_{\fo}R$.
	
	\item From the generalized Heegner classes $z_{f,p^n}$ \cite{castella-hsieh:heegner-cycles-p-adic-l-functions,magrone:generalized-heegner-cycles-quaternionic}, we form
	\begin{align*}
		\alpha_n = \cores_{K[p^{n+1}]/K_n}(z_{f,p^{n+1}})\in H^1(K_n,T).
	\end{align*}

    \item For $n\ge 1$, we denote $\Phi_n(X) = \frac{X^{p^n}-1}{X^{p^{n-1}}-1}$, $\omega_n(X) = (X+1)^{p^n}-1$,
    \begin{align*}
	   \tilde{\omega}_n^+(X) = \prod_{2\le i\le n,i\equiv 0\bmod 2} \Phi_i(X+1),\quad \tilde{\omega}_n^-(X) = \prod_{1\le i\le n,i\equiv 1\bmod 2} \Phi_i(X+1),
    \end{align*}
    and $\omega_n^\pm(X) = X\tilde{\omega}_n^\pm(X)$. 
\end{itemize}

Our main result is the following
\begin{mainThm}\label{thm:B}
Let $f$ be a normalized cuspidal eigen-newform of even weight $k$ and level $\Gamma_0(N)$. Let $K$ be an imaginary quadratic field with coprime-to-$Np$ discriminant $d_K\ne -3,-4$ and satisfying \eqref{ass:genHeeg}. Assume moreover $N\ge 5$ if $N^-=1$, and $N^+> 3$ and $k\ge4$ if $N^-\ne 1$. Let $p$ be a prime such that:
	\begin{enumerate}
		\item[(i)] $p$ splits in $K$, and does not divide the class number $h_K$;
		\item[(ii')] $p\nmid 2(k-1)!N\varphi(N)$;
		\item[(iii')] $a_p(f) = 0$;
		\item[(iv)] {For any $w\mid N$ in $K$ and any $w'\mid w$ in $K_\infty$, the group $A^{G_{K_{\infty,w'}}}$ is divisible;}
		\item[(v')] The class $z_{f, K}$ is globally primitive, i.e., $z_{f,K}\notin \varpi H^1(K,T)$. Moreover, $p\alpha_1 \ne p^{(k-2)/2}(p-1)/2\cdot \alpha_0$ and $p\alpha_2 \ne -p^{k-2}\alpha_0$;
		\item[(vi)] $\Sel(K,A)\simeq \fF/\fo\cdot z_{f,K}$;
        \item[(vii)] $p$ is unramified in $\fF$.
	\end{enumerate}
	Then:
	\begin{enumerate}
		\item[(I)] the Selmer modules $\Sel^+(K_\infty,A)$ and $\Sel^-(K_\infty,A)$, whose definitions are recalled in \S\ref{subsec:selmer-control}, are cofree of corank one over $\Lambda$;
		\item[(II)] for all $n\ge 0$, $\corank_{\fo}(\Sel(K_n/A)) = p^n$;
		\item[(III)] the Shafarevich--Tate groups $\Sha_{\BK}(K_n,A)$ and $\Sha_{\AJ}(K_n,A)$ introduced in \S\ref{sec:heegner} coincide. In particular, $\Sha_{\AJ}(K_n,A)$ is finite for all $n\ge 0$.
	\end{enumerate}
	Suppose in addition that $\loc_v(z_{f,K})\notin \varpi H^1_\bff(K_v,T)$, then we have
	\begin{enumerate}
		\item[(IV)] $\Sel(K_\infty,A)^\vee$ is free of rank 2 over $\Lambda$;
		\item[(V)] $\Sha_{\AJ}(K_n,A) = 0$ for all $n\in\Z_{\ge 0}\cup \{\infty\}$.
	\end{enumerate}
\end{mainThm}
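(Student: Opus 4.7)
The proof follows Matar's strategy from \cite{matar-supersingular}, adapted to arbitrary even weight by leveraging the higher-weight plus/minus Coleman theory developed in \cite{lei10,lei-compositio,BL-21}. The plan is to define plus/minus Selmer groups $\Sel^\pm(K_n,A)$ via the signed local conditions at the two primes above $p$, establish a control theorem for them analogous to Proposition~\ref{prop:control} (using hypothesis (iv) at the bad primes and the vanishing $H^0(K_{\infty,w},A)=0$ at the good primes supplied by Lemma~\ref{lem:torsion}, which applies since $a_p(f)=0$ forces $f$ to be non-ordinary at $p$), and then exploit the generalized Heegner classes $\alpha_n$ to produce distinguished elements in the plus/minus Heegner modules $\plim_n H^1_\pm(K_n,T)$.

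For part (I), I would first check that the family $(\alpha_n)$ satisfies the three-term Euler-system norm relation which, together with $a_p(f)=0$, allows one to split the $\alpha_n$ via the signed projectors and produce elements $\alpha^\pm\in\plim_n H^1_\pm(K_n,T)$. The assumptions in (v')—namely $z_{f,K}\notin\varpi H^1(K,T)$ together with $p\alpha_1\neq -p^{k-2}(p-1)\alpha_0$ and $p\alpha_2\neq -p^{k-2}\alpha_0$—are precisely the higher-weight analogues of the nondegeneracy conditions in \cite[Theorem~3.1]{matar-supersingular}, and ensure that $\alpha^\pm$ are nonzero modulo the maximal ideal of $\Lambda$. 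A Nakayama argument combined with local duality and the bound on the $\Lambda$-rank of $\Sel^\pm(K_\infty,A)^\vee$ coming from the Euler characteristic formula then forces $\Sel^\pm(K_\infty,A)^\vee$ to be free of $\Lambda$-rank one, with $\alpha^\pm$ providing a generator up to a scalar. The hard part will be organizing the explicit linear relations between $\alpha_0,\alpha_1,\alpha_2$ and the signed projectors so that (v') translates cleanly into nondivisibility of $\alpha^\pm$ in the plus/minus Heegner modules.

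Part (II) follows from (I) by a Kobayashi-style descent: one takes $\Gamma_n$-coinvariants of $\Sel^\pm(K_\infty,A)^\vee$ and combines them through the exact sequence relating $\Sel(K_n,A)$ to the plus/minus Selmer groups, using the explicit descriptions of the signed local conditions in terms of $\omega_n^\pm$. The resulting computation yields the corank formula $\corank_\fo\Sel(K_n,A)=p^n$. For part (III), I observe that $\alpha_n=\cores_{K[p^{n+1}]/K_n}(z_{f,p^{n+1}})$ together with its $\Gal(K_n/K)$-conjugates lies in $\Psi(K_n)$ and spans an $\fF$-subspace of $\Sel(K_n,V)$ of dimension $p^n$; combined with the equality $\dim_\fF\Sel(K_n,V)=p^n$ from (II), this forces $\Psi(K_n)\otimes_\fo\fF\iso\Sel(K_n,V)$, hence $\Psi(K_n)\otimes_\fo(\fF/\fo)=\Sel(K_n,A)_{\div}$, which gives $\Sha_\AJ(K_n,A)=\Sha_\BK(K_n,A)$. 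Finiteness of this common quotient is then automatic.

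Finally, parts (IV) and (V) reduce cleanly to the main body. Under the additional assumption $\loc_v(z_{f,K})\notin\varpi\honef(K_v,T)$, all hypotheses of Theorem~\ref{mainthm} are in force (note (iii) is vacuous since $a_p(f)=0$ precludes $p$-ordinarity), so Theorem~\ref{mainthm}(3) yields the rank-two freeness of $\Sel(K_\infty,A)^\vee$. Part (V) then follows by combining the identification $\Sha_\AJ(K_n,A)=\Sha_\BK(K_n,A)$ from (III) with the vanishing $\Sha_\BK(K_n,A)=0$ of Corollary~\ref{cor:bdp-sha-trivial}, and passing to the direct limit for $n=\infty$.
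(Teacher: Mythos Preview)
Your overall architecture matches the paper's, but two specific steps diverge, one of which is a genuine error.

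\textbf{Part (I).} You propose to build norm-compatible classes $\alpha^\pm\in\varprojlim_n H^1_\pm(K_{n,w},T)$ and argue they are nonzero modulo the maximal ideal. The paper does \emph{not} do this. The norm relation (H1) reads $\cores_{K_n/K_{n-1}}(\alpha_n)=-p^{k-2}\alpha_{n-2}$, so the $\alpha_n$ are far from norm-compatible and there is no obvious element in the inverse limit; any regularization would require additional work you have not supplied. Instead, the paper argues as follows: the control theorem (your hypothesis (iv) plus $H^0(K_{\infty,w},A)=0$) together with $\Sel^\pm(K,A)=\Sel(K,A)\simeq\fF/\fo$ from (vi) gives $\cX^\pm_\infty/(\gamma-1)\cX^\pm_\infty\simeq\fo$, so $\cX^\pm_\infty$ is cyclic by Nakayama. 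Nontorsionness is then obtained at \emph{finite level}: one shows (this is where (v') is used) that the Heegner module $\cH_{n,\fF}=\fF[\Gal(K_n/K)]\cdot\alpha_n$ is isomorphic to $\Lambda_\fF/(\omega_n^{\varepsilon(n)})$, and since $\cH_n\otimes\fF/\fo\subseteq\Sel^{\varepsilon(n)}(K_n,A)$, the dual $\cX^{\varepsilon(n)}_\infty$ surjects onto $\fo[X]/(\omega_n^{\varepsilon(n)})$ for all $n$, precluding torsion. Your conflation of an element of compact Iwasawa cohomology with a generator of $\cX^\pm_\infty=\Sel^\pm(K_\infty,A)^\vee$ is also a category error that would need to be resolved.

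\textbf{Part (III).} Your claim that $\alpha_n$ and its $\Gal(K_n/K)$-conjugates span an $\fF$-subspace of dimension $p^n$ is false: by the structure result just mentioned, $\dim_\fF\cH_{n,\fF}=\deg\omega_n^{\varepsilon(n)}$, which is roughly $p^n/2$. The paper instead works with $\cH(K_n)=\sum_{m\le n}\cH_m=\cH_n+\cH_{n-1}$ and computes
\[
\rank_\fo\cH(K_n)=\deg\omega_n^{+}+\deg\omega_n^{-}-\corank_\fo\bigl(H(K_n)^+\cap H(K_n)^-\bigr)\ge p^n+1-1=p^n,
\]
the last inequality using that the intersection sits inside $\Sel^1(K_n,A)^{\omega_n^\pm=0}$, whose corank is at most $1$ by the control theorem for $\Sel^1$ and (vi). This gives $\rank_\fo\Psi(K_n)\ge p^n$, which together with (II) forces $\Psi(K_n)\otimes\fF/\fo=\Sel(K_n,A)_{\div}$, whence $\Sha_\AJ=\Sha_\BK$.

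Your treatment of (II), (IV), (V) is essentially correct. For (IV) the paper does not invoke Theorem~\ref{mainthm} wholesale but gives a direct argument: Remark~\ref{rem:matar3.2}(ii) bounds $\Sel(K_\infty,A)^\Gamma$, Nakayama gives $\cX_\infty\simeq\Lambda^2/\mathfrak a$, and the map $\cX_\infty\to\cX_\infty^+\oplus\cX_\infty^-\simeq\Lambda^2$ dual to the plus/minus decomposition has cokernel of bounded $\fo$-rank, forcing $\mathfrak a=0$. Your shortcut via Theorem~\ref{mainthm}(3) is also valid.
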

\begin{rem}
		Some comments for the assumptions:
		\begin{enumerate}
            \item we need the Fontaine--Laffaille condition $p> k-1$ in (ii') so as to construct a strongly divisible lattice {(in the sense of \cite[D\'efinition 7.7]{FL}; see also  \cite[Theorem 4.3]{bloch-kato:l-fcts-tamagawa-numbers})} inside the crystalline Dieudonn\'e module of $V$; this allows us to define integral Coleman maps in \S\ref{sec:appendix-review-plus-minus};
            
			\item in the supersingular setting, the assumption (iii) in Theorem \ref{mainthm}, as well as its consequence Lemma \ref{lem:torsion}.(a), is automatically valid (see Lemma \ref{lem:lem4.4});

            \item {we continue to assume the $p$-freeness of Tamagawa numbers, condition (iv), to ensure exact control theorems of plus/minus Selmer groups as well as that of their intersection (see Proposition \ref{prop:pm-control}, Lemma \ref{lem:sel-one-intersection} below);}
            
			\item condition (v') allows us to study Selmer groups explicitly by exploiting the Heegner modules sitting inside, an approach also employed by \cite{perrin-riou87-bullSMF,longo-vigni-plus-minus,matar-supersingular}. Note that if $k=2$, then the conditions on $\alpha_1,\alpha_2$ are superfluous as $\alpha_0 = -2z_{f,K}\notin pH^1(K,T)$;
			
			\item under the assumption $z_{f,K}\notin \varpi H^1(K,T)$ from (v'), we see that (vi) actually asserts that $z_{f,K}$ is a generator of $\Sel(K,A)$, namely, the map $\fF/\fo\to \Sel(K,A)$ sending $t$ to $z_{f,K}\otimes t$ is an isomorphism;

            \item the additional assumption (vii) is required for our argument in \S\ref{sec:appendix-nontorsion}, which guarantees the irreducibility of cyclotomic polynomials $\Phi_n(X)$.
		\end{enumerate}
	\end{rem}

\subsubsection{Organization}

This appendix is arranged as follows: In \S\ref{sec:appendix-review-plus-minus} we recall the definition of plus/minus Selmer groups and establish some basic properties of them in parallel with those studied by Kobayashi \cite{kobayashi03}. In \S\ref{sec:appendix-nontorsion}, we adapt an argument of Matar to prove the nontorsionness (Corollary \ref{cor:nontorsion}) and the lower bound of the corank of $\Sel(K_n,A)$ (Proposition \ref{prop:lower-bound}). In \S\ref{sec:appendix-proof-main}, we replicate Matar's argument in our setting and establish the $\Lambda$-structures of $\Sel^\pm(K_\infty,A)^\vee$ and the upper bound of $\corank_\fo(\Sel(K_n,A))$. Combined with the study of generalized Heegner modules, these are enough to deduce the finiteness of Shafarevich--Tate groups. Finally, assuming $\loc_v(z_{f,K})\notin \varpi H^1_\bff(K_v,T)$, we borrow some results from \S\ref{sec:bdp} to deduce the rest of the theorem.

\subsection{Review of plus/minus theory}
\label{sec:appendix-review-plus-minus}
\subsubsection{Setup}
Throughout the appendix, we assume $p\nmid 2N\varphi(N)(k-1)!h_K$, $p$ splits in $K$ and $a_p(f)=0$. We recall here Lei's theory of plus/minus Coleman maps for modular forms \cite{lei10, lei-compositio}. As the Galois representation $V$ is crystalline, we can form the corresponding Dieudonn\'e module $D(V) =\Dcris(V)$, which is equipped with the decreasing de Rham filtration and the semi-linear operator $\varphi$. For $i\in \Z$, we write $D^i(V)$ for the $i$-th filtration of $D(V)$. In addition, since {we assume the Fontaine--Laffaille condition $p>k-1$ holds, there is a strongly divisible lattice $D(T)\subset D(V)$ (see \cite[D\'efinition 7.7]{FL} or \cite[Theorem 4.3]{bloch-kato:l-fcts-tamagawa-numbers}), which inherits a filtration from $D(V)$. We denote this filtration by $D^\bullet(T)$}.

Throughout \S\ref{sec:appendix-review-plus-minus} we will fix $w\in \{v,\bar{v}\}$. As $K_{\infty,w}/K_w$ is a totally ramified $\Z_p$-extension of $K_w = \Q_p$, we have the Perrin-Riou regulator as constructed in \cite[\S2]{lei10}:
\begin{align*}
	\cL_T: H^1(K_{\infty,w},T) = \plim_n H^1(K_{n,w},T) \longrightarrow D(V)\otimes \sH_\infty,
\end{align*}
 where $\sH_\infty$ is the algebra of tempered distributions on $\Gamma\simeq\gal(K_{\infty,w}/K_w)$, which we may identify with the set of power series in $\fF[[X]]$ that converge on the $p$-adic open unit disc $\{z\in \C_p: |z|<1\}$.

\subsubsection{Coleman maps}
As $V$ has Hodge--Tate weights $\{r,1-r\}$, where $r=k/2$, we see that
\begin{align*}
	D^i(T) = \begin{cases}
		D(T), & i\le -r;\\
		\fo \cdot \omega, & -r<i\le r-1;\\
		0, & i\ge r.
	\end{cases}
\end{align*}
Here, $\omega$ is a chosen generator of $D^0(T)$. 

\begin{lem}
	We have $\varphi(\omega)/p^{r-1}\in D(T)$, and $D(T)$ is freely spanned by $\omega$ and $\varphi(\omega)/p^{r-1}$. 
\end{lem}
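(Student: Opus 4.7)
The plan is to exploit two ingredients: the assumption $a_p(f)=0$, which makes the Frobenius structure on $D(V)$ particularly rigid; and the strong divisibility of the Fontaine--Laffaille lattice $D(T)$, which is valid thanks to the running hypothesis $p>k-1$ coming from $p\nmid 2N\varphi(N)(k-1)!$. So the very first thing I would do is compute the Frobenius action. Since $a_p(f)=0$, the characteristic polynomial of $\varphi$ on $D(V_f)$ is $X^2+p^{k-1}$; twisting by $r=k/2$ divides eigenvalues by $p^r$, so the characteristic polynomial of $\varphi$ on $D(V)$ becomes $X^2+p^{-1}$, and Cayley--Hamilton yields the key identity $\varphi^2=-p^{-1}$.

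Second, the first assertion $\varphi(\omega)/p^{r-1}\in D(T)$ is immediate from strong divisibility: the condition $\varphi_i(D^i(T))\subset D(T)$ with $\varphi_i=p^{-i}\varphi$, applied at $i=r-1$ and using $D^{r-1}(T)=\fo\omega$, gives exactly this containment.

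Third, for the basis claim, I would fix an arbitrary $\fo$-basis $\{\omega,\eta\}$ of the rank-two free module $D(T)$ and write
\[
\varphi(\omega)=a\omega+b\eta,\qquad \varphi(\eta)=c\omega+d\eta,\qquad a,b,c,d\in\fF.
\]
The identity $\varphi^2=-p^{-1}$ translates to $a+d=0$ and $ad-bc=p^{-1}$, i.e.\ $d=-a$ and $bc=-a^2-p^{-1}$. Strong divisibility applied at $i=r-1$ gives $\ord(a),\ord(b)\ge r-1$; applied at $i=-r$ (where $D^{-r}(T)=D(T)$) it gives $\ord(c)\ge -r$. Since $\ord(a^2)\ge 2(r-1)\ge 0>-1=\ord(p^{-1})$, the strict triangle inequality forces $\ord(bc)=-1$; combined with the two lower bounds $\ord(b)\ge r-1$ and $\ord(c)\ge -r$ this pins down $\ord(b)=r-1$ exactly. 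Writing $b=p^{r-1}u$ with $u\in\fo^\times$, the element
\[
v:=\varphi(\omega)/p^{r-1}=(a/p^{r-1})\,\omega+u\,\eta
\]
has unit coefficient at $\eta$, so $\{\omega,v\}$ is indeed an $\fo$-basis of $D(T)$.

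The only point requiring care is uniformity across weights, in particular the edge case $k=2$ where $r-1=0$: then the bound $\ord(a)\ge 0$ still gives $\ord(a^2)\ge 0>-1=\ord(p^{-1})$, so the decisive strict inequality $\ord(a^2)>\ord(p^{-1})$ driving the argument holds regardless of weight and the proof is uniform. This is where I expect any bookkeeping slip could occur; everything else is just linear algebra combined with the two structural inputs ($\varphi^2=-p^{-1}$ and strong divisibility).
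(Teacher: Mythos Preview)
Your proof is correct but follows a genuinely different route from the paper's. Both arguments use the identity $\varphi^2=-p^{-1}$ coming from $a_p=0$, and both begin with the containment $\varphi(\omega)\in p^{r-1}D(T)$. From there the paths diverge. You fix a basis $\{\omega,\eta\}$, write $\varphi$ as a $2\times 2$ matrix, read off trace and determinant from the characteristic polynomial $X^2+p^{-1}$, and combine the containment conditions at the two filtration jumps (at $i=r-1$ and $i=-r$, the latter giving $p^r\varphi(D(T))\subset D(T)$) to pin down $\ord_p(b)=r-1$ via the strict triangle inequality. The paper instead invokes the \emph{surjectivity} half of strong divisibility, namely $D(T)=\fo\cdot p^{1-r}\varphi(\omega)+p^r\varphi(D(T))$, and runs an iteration: writing $D$ for the span of $\omega$ and $p^{1-r}\varphi(\omega)$, one gets $\xi\in D+\fo\cdot p^r\varphi(\xi)$, then applies $\varphi$ again and uses $\varphi^2=-p^{-1}$ to obtain $\xi\in D+\fo\cdot p^{k-1}\xi$, whence $\xi\in D$ since $1-p^{k-1}\lambda$ is a unit. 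Your approach is more explicit and yields the exact valuations of all matrix entries; the paper's is a coordinate-free Nakayama-type trick that avoids any valuation bookkeeping. Either way the same two structural inputs (the Frobenius identity and strong divisibility) do all the work.
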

\begin{proof}
	We argue in the same way as \cite[Lemma 3.1]{LLZ17}. The Fontaine--Laffaille condition guaranties the following:
	\begin{itemize}
		\item $\varphi(\omega) \in p^{r-1}D(T)$ since $\omega\in D^{r-1}(T)$.
		\item There exists $\xi \in D(T)$ such that $D(T) = \fo\xi \oplus \fo\omega$. 
		\item $D(T) = \fo\cdot p^{1-r}\varphi(\omega) + p^r\varphi(D(T))$.
	\end{itemize}
	Let $D$ be the $\fo$-submodule of $D(T)$ spanned by $\omega$ and $\varphi(\omega)/p^{r-1}$. The second and third properties then implies $\xi \in D + \fo\cdot p^r\varphi(\xi)$. Iterating, we find $\xi\in D + p^r\varphi(D) + \fo\cdot p^k\varphi^2(\xi)$. As $V = V_f(r)$, we have $\varphi^2 = -p^{-1}$. So $p^r\varphi(D) \subset \fo \cdot p^r\varphi(\omega) + \fo\omega\subset D$, and thus $\xi\in D+\fo p^{k-1}\xi$. This forces $\xi\in D$. 
\end{proof}

Now, put $v_1 = \omega, v_2 = \varphi(\omega)/p^{r-1}$. Then under this basis, $\cL_T$ admits the following decomposition for all $z\in H^1(K_{\infty,v},T)$ \cite[Definition 2.8]{lei10}:
\begin{align*}
	\cL_T(z) = \begin{bmatrix}
	    v_1&v_2
	\end{bmatrix} \mat{\log_{p,k}^+}{0}{0}{\log_{p,k}^-} \col{\Col^+(z)}{\Col^-(z)}.
\end{align*}
Here,
\begin{itemize}
	\item for $\varepsilon\in \{\pm 1\}$,
	\begin{align*}
		\log_{p,k}^\varepsilon = \prod_{1-r\le j\le r-1}\frac{1}{p}\prod_{\substack{n\ge 1\\ (-1)^n = \varepsilon}} \frac{\Phi_n(\gamma^{-j}(1+X))}{p}.
	\end{align*}
	As shown by Pollack \cite[Lemma 4.1]{pollack:p-adic-l-fct-mod-form-supersugular-primes}, both products converge in $\Q_p[[X]]$;
	
	\item the Coleman maps, $\Col^\pm$, are $\Lambda$-morphisms from $H^1(K_{\infty,w},T^*)$ to $\Lambda_\fF$. By \cite[Proposition 2.20]{kazim-antonio17}, it is known that the image of $\Col^\pm$ is a finite-index submodule of some free $\Lambda$-submodule of rank 1 inside $\Lambda_\fF$.
\end{itemize}

Next, we recall the following lemma that will be crucially used in our proof of the main theorem.
\begin{lem}\label{lem:lem4.4}
	We have $A[\varpi]^{G_{K_{\infty,w}}} = 0$, and hence $A^{G_{K_{\infty,w}}} = 0$.
\end{lem}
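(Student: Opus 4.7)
\medskip
\noindent\emph{Proof proposal.} Set $F = K_w = \Q_p$ and write $\bar V = A[\varpi] \cong T/\varpi T$, a $2$-dimensional $\fo/\varpi$-representation of $G_F$. My plan is to establish two structural properties of $\bar V$ using Fontaine--Laffaille theory and then to deduce the lemma formally. The two properties are: (a) $\bar V$ is absolutely irreducible as a $G_F$-module, and (b) the image of the inertia subgroup $I_F$ in $\aut(\bar V)$ contains a non-trivial element of order prime to $p$.

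Granting (a) and (b), the proof runs as follows. Since $K_{\infty,w}/K_w$ is a totally ramified $\Z_p$-extension, $G_{K_{\infty,w}}$ is normal in $G_F$ with quotient $\Gamma_w \cong \Z_p$, which is pro-$p$. Hence $\bar V^{G_{K_{\infty,w}}}$ is a $G_F$-stable subspace of $\bar V$, and by (a) it is either $0$ or all of $\bar V$. If it were all of $\bar V$, then the $G_F$-action on $\bar V$ would factor through the pro-$p$ group $\Gamma_w$, so the image of $I_F$ in $\aut(\bar V)$ would be pro-$p$---a direct contradiction with (b). Hence $A[\varpi]^{G_{K_{\infty,w}}} = 0$. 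The second conclusion is formal: invariants are left exact, so $A^{G_{K_{\infty,w}}}[\varpi] = A[\varpi]^{G_{K_{\infty,w}}} = 0$, and combined with $A$ being $\varpi$-power torsion, this forces $A^{G_{K_{\infty,w}}} = 0$.

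To establish (a) and (b), I would appeal to Fontaine--Laffaille theory, which applies here thanks to the range $p > k-1$ from hypothesis (ii'). Because $V$ is crystalline with Hodge--Tate weights $\{k/2,\,1-k/2\}$ and $a_p(f) = 0$ by hypothesis (iii'), the mod-$\varpi$ Dieudonn\'e module has maximally supersingular Frobenius; the standard recipe then identifies $\bar V|_{I_F} \otimes_{\fo/\varpi} \overline{\F}_p$ with the direct sum $\chi \oplus \chi^p$ of two fundamental characters of level $2$, where $\chi$ has order $(p^2-1)/\gcd(p^2-1,k-1)$. Since $1 \le k-1 < p < p^2 - 1$, this order is strictly greater than $1$ and prime to $p$, giving (b). Because Frobenius permutes $\chi$ and $\chi^p$ non-trivially, $\bar V$ cannot split as a $G_F$-module over $\overline{\F}_p$, which gives (a).

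The main care-point is the Fontaine--Laffaille calculation of $\bar V|_{I_F}$. One must accurately track the central critical twist---the Hodge--Tate weights of $V$ are $\{k/2,\,1-k/2\}$, not $\{k-1,\,0\}$---and verify that $a_p(f) = 0$ really forces the characters of inertia to be of level $2$ rather than level $1$. Both points, however, are classical in the theory of mod-$p$ local Galois representations of modular forms (see, e.g., the works of Fontaine--Laffaille, Edixhoven, and Breuil), and with a clean citation in place the rest of the argument is merely the pro-$p$ versus prime-to-$p$ dichotomy combined with irreducibility.
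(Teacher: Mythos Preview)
Your proposal is correct and follows essentially the same route as the paper: both rest on Fontaine's description of the mod-$p$ inertial representation at a non-ordinary prime (the result of Edixhoven that the paper cites), which in the Fontaine--Laffaille range $p>k-1$ identifies $\bar V|_{I_F}$ with a pair of genuinely level-$2$ fundamental characters. The paper simply outsources the deduction to that reference together with the argument in Lei--Zhao, while you spell it out via absolute irreducibility and the pro-$p$/prime-to-$p$ dichotomy; the key input is identical.
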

\begin{proof}
This follows from the same proof of \cite[Lemma 4.4]{lei-zhao} and \cite[Lemma 4.4]{lei-compositio}. By \cite[Theorem 2.6]{edixhoven92}, the inertia group at $p$ acts on $A[\varpi]$ as $\begin{bmatrix}
	    \psi^{k-1} &0\\ 0&\psi'^{k-1}
	\end{bmatrix}$, where $\psi$ and $\psi'$ are fundamental characters of level $2$. As $\psi^{k-1}$ and $\psi'^{k-1}$ are non-trivial characters, it follows that $A[\varpi]^{G_{K_{\infty,w}}} = 0$. 
\end{proof}

Thanks to Lemma \ref{lem:lem4.4}, we have an identification
\begin{align*}
	H^1(K_{n,w},A) = H^1(K_{\infty,w},A)^{\Gamma_n}
\end{align*}
for all $n\in \Z_{\ge 0}$. By local Tate duality, we derive
\begin{align*}
	H^1(K_{n,w},T) = H^1(K_{\infty,w},T)_{\Gamma_n}.
\end{align*}
As such, we can define $\fCol_n^\pm$ to be $\Col^\pm$ modulo $\Gamma_n$, namely the induced map
\begin{align*}
	\fCol_n^\pm: H^1(K_{n,w},T)\to \Lambda_{\fF}/\omega_n\Lambda_{\fF}.
\end{align*}

\subsubsection{Plus/minus local subgroups}

We put
\begin{align*}
	H^1_{\pm}(K_{\infty,w},T) = \ker(\Col^\pm)\subset H^1(K_{\infty,w},T),
\end{align*}
and write $H^1_{\pm}(K_{\infty,w},A)\subset H^1(K_{\infty,w},A)$ for its orthogonal complement under the local Tate pairing; put also
\begin{align*}
	H^1_\pm(K_{n,w},A) = H^1(K_{\infty,w},A)^{\Gamma_n}.
\end{align*}
By local duality, we have
\begin{align*}
	H^1_\pm (K_{n,w},A) = \ker(\fCol_n^\pm)^\perp.
\end{align*}
We recall the following description given in \cite[Proposition 3.3]{lei10}; note that for $m\in \Z_{\ge 0}$, the restriction map $H^1(K_{m,w},A)\to H^1(K_{m+1,w},A)$ is injective, by Lemma \ref{lem:lem4.4}.
\begin{lem}\label{lem:even-odd-local-condition}
	Let $\varepsilon\in \{\pm 1\}$. The local subgroup $H^1_{\varepsilon}(K_{n,w},A)$ coincides with
	\begin{align*}
		\left\{z\in H^1_\bff(K_{n,w},A): \cores_{K_n/K_{m+1}}(z)\in H^1(K_{m,v},A)\text{ for all }m\in \Z_{\ge 0}, m<n, (-1)^m = \varepsilon\right\}.
	\end{align*}
\end{lem}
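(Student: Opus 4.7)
By local Tate duality and the definition $H^1_\varepsilon(K_{n,w},A) = \ker(\fCol_n^\varepsilon)^\perp$, the lemma amounts to showing that, under the pairing
$$\langle -,-\rangle_n\colon H^1(K_{n,w},T)\times H^1(K_{n,w},A)\to \fF/\fo,$$
the annihilator of $\ker(\fCol_n^\varepsilon)$ within $H^1_\bff(K_{n,w},A)$ equals the right-hand side of the statement. My plan is to establish both inclusions by exploiting the product formula for $\log_{p,k}^\varepsilon$, following Kobayashi \cite{kobayashi03} as adapted to higher weight in \cite{lei10}.

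For the inclusion "RHS $\subseteq H^1_\varepsilon(K_{n,w},A)$", I would take $z$ satisfying the corestriction condition and any $y \in \ker(\fCol_n^\varepsilon)$, lifted to $\tilde y \in H^1(K_{\infty,w},T)$, so that $\Col^\varepsilon(\tilde y) \in \omega_n\Lambda_\fF$. The product formula
$$\log_{p,k}^\varepsilon(X) = \prod_{1-r\le j\le r-1}\frac{1}{p}\prod_{\substack{i\ge 1,\,(-1)^i=\varepsilon}}\frac{\Phi_i(\gamma^{-j}(1+X))}{p}$$
together with the Perrin--Riou regulator factorization implies that $\cL_T(\tilde y)$ vanishes, modulo $\omega_n$, at all Artin characters of $\Gamma$ of conductor $p^i$ with $(-1)^i = \varepsilon$. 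Via Perrin--Riou's explicit reciprocity law, these vanishing conditions translate into integrality/annihilation relations on the intermediate-layer projections $\cores_{K_n/K_m}(\tilde y)$ for $(-1)^m = \varepsilon$. Pairing against $z$ and invoking the hypothesis that $\cores_{K_n/K_{m+1}}(z)$ descends to $H^1(K_{m,w},A)$ (which is meaningful thanks to the injectivity $H^1(K_{m,w},A)\hookrightarrow H^1(K_{m+1,w},A)$ guaranteed by Lemma \ref{lem:lem4.4}), one computes $\langle y,z\rangle_n = 0$.

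For the reverse inclusion, the key ingredient is a surjectivity statement: for each $m < n$ with $(-1)^m = \varepsilon$, the image of $\fCol_n^\varepsilon$ restricted to classes with prescribed behavior at the other layers surjects onto $\Lambda_\fF/\Phi_m\Lambda_\fF$. Granting this, for any $z \in H^1_\bff(K_{n,w},A)$ orthogonal to $\ker(\fCol_n^\varepsilon)$, I would exhibit, for each such $m$, an explicit $y_m \in \ker(\fCol_n^\varepsilon)$ whose pairing against $z$ detects the image of $\cores_{K_n/K_{m+1}}(z)$ in the cokernel of $H^1(K_{m,w},A)\hookrightarrow H^1(K_{m+1,w},A)$. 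Vanishing of this pairing then forces $\cores_{K_n/K_{m+1}}(z)$ to lie in $H^1(K_{m,w},A)$, yielding the desired corestriction condition.

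The main obstacle lies in this reverse inclusion: constructing explicit test elements $y_m$ of $\ker(\fCol_n^\varepsilon)$ that isolate the "failure of corestriction compatibility" at each parity-matched layer requires a delicate bookkeeping that matches the parity $(-1)^i = \varepsilon$ on the factors $\Phi_i$ in $\log_{p,k}^\varepsilon$ with the parity $(-1)^m = \varepsilon$ indexing the layers $K_m$. A secondary technical point is that, since the image of $\Col^\varepsilon$ is only a finite-index submodule of a free rank-one $\Lambda$-submodule of $\Lambda_\fF$, the integral surjectivity required above is non-trivial; here the Fontaine--Laffaille condition $p > k-1$ enters to ensure that $D(T)$ is strongly divisible, pinning down the image of $\Col^\varepsilon$ precisely enough to run the argument.
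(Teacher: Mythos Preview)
The paper does not give a proof of this lemma at all: it is introduced with the sentence ``We recall the following description given in \cite[Proposition 3.3]{lei10}'' and the result is simply imported from Lei's work (which in turn adapts Kobayashi \cite{kobayashi03} to higher weight). So there is no proof in the paper to compare your proposal against.

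Your sketch is in the spirit of the arguments in \cite{kobayashi03,lei10}, using local duality together with the factorization of the Perrin--Riou regulator through $\log_{p,k}^\pm$ and the Coleman maps. One point to be careful about: from $\tilde y\in\ker(\fCol_n^\varepsilon)$ you only get $\Col^\varepsilon(\tilde y)\in\omega_n\Lambda_\fF$, while the other component $\log_{p,k}^{-\varepsilon}\Col^{-\varepsilon}(\tilde y)$ of $\cL_T(\tilde y)$ is unconstrained; it is the zeros of $\log_{p,k}^{-\varepsilon}$, at characters of conductor $p^i$ with $(-1)^i=-\varepsilon$, that you must exploit. The corestriction condition in the statement is indexed by layers $m$ with $(-1)^m=\varepsilon$, but the interpolation of $\cL_T$ at a character of conductor $p^{m+1}$ sees the image at level $m+1$, which is precisely the parity shift you flagged as the ``delicate bookkeeping''. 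You have identified the right obstacle; if you wish to include a self-contained argument rather than the citation the paper uses, the cleanest route is to follow \cite[Proposition~3.3]{lei10} (or \cite[Proposition~8.18]{kobayashi03} in weight~2) directly, rather than re-deriving it.
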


\begin{cor}\label{cor:fCol-Col}
	The image of $\fCol_n^\pm$ is annihilated by $\omega_n^\pm$.
\end{cor}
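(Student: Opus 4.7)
The plan is to reduce the annihilation statement to a vanishing property of the Coleman maps at characters of $\Gamma$, and then to verify that vanishing by combining the factorization of $\cL_T$ through $\log_{p,k}^\pm$ with the interpolation formula for the Perrin--Riou regulator and the supersingular Frobenius relation on $D(V)$. The argument follows the weight-two template of \cite[Theorem~6.5]{kobayashi03} and its higher-weight extension in \cite[\S5]{lei10}.

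First, using $\omega_n=\omega_n^\pm\tilde\omega_n^\mp$ and the fact that $\omega_n^\pm$ is a non-zero-divisor in the integral domain $\Lambda_\fF$, the assertion that $\omega_n^\pm$ annihilates $\fCol_n^\pm(z)$ in $\Lambda_\fF/\omega_n\Lambda_\fF$ is equivalent to the claim that $\Col^\pm(\tilde z)$ is divisible by $\tilde\omega_n^\mp$ modulo $\omega_n\Lambda_\fF$, where $\tilde z\in H^1(K_{\infty,w},T)$ is any lift of $z$. Since the roots of $\tilde\omega_n^\mp$ correspond exactly to the characters of $\Gamma$ of order $p^i$ for $1\le i\le n$ of the parity opposite to $\pm$, the required divisibility will follow from the stronger assertion that $\Col^\pm(\tilde z)$ vanishes at every such character, for all $i\ge 1$.

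Next, I will evaluate the defining factorization
\[
\cL_T(\tilde z)=\log_{p,k}^+\,\Col^+(\tilde z)\,v_1+\log_{p,k}^-\,\Col^-(\tilde z)\,v_2
\]
at a character $\theta$ of $\Gamma$ of order $p^i$ with $i$ odd. From the explicit product formulas for $\log_{p,k}^\pm$, one has $\log_{p,k}^-(\theta)=0$ while $\log_{p,k}^+(\theta)\ne 0$, so the factorization collapses to $\cL_T(\tilde z)(\theta)=\log_{p,k}^+(\theta)\,\Col^+(\tilde z)(\theta)\cdot v_1$, and the vanishing of $\Col^+(\tilde z)(\theta)$ becomes equivalent to the vanishing of the $v_1$-component of $\cL_T(\tilde z)(\theta)$. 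The Perrin--Riou interpolation formula identifies the latter, up to a non-zero scalar, with $\varphi^{-i}(\exp^*(\tilde z_i))$, where $\tilde z_i\in H^1(K_{i,w},V)$ is the image of $\tilde z$ and $\exp^*(\tilde z_i)$ belongs to $D^0(V)=\fF\cdot v_1$. The relation $\varphi^2=-p^{-1}$ on $D(V)$, a consequence of the supersingular condition $a_p(f)=0$, together with $\varphi(v_1)=p^{r-1}v_2$, yields $\varphi^{-i}(\fF\cdot v_1)\subseteq \fF\cdot v_2$ whenever $i$ is odd, forcing the $v_1$-component of $\cL_T(\tilde z)(\theta)$ to vanish. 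An analogous parity swap handles $\Col^-$ at characters of even order.

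The main technical point is invoking the correct form of the Perrin--Riou interpolation formula in our crystalline, higher-weight setting; this has been worked out in \cite[\S2]{lei10}, and the remainder of the argument is a direct parity check using the Frobenius matrix in the basis $(v_1,v_2)$.
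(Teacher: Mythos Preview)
Your argument is correct but proceeds along a genuinely different route from the paper's.

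The paper does not touch the Perrin--Riou interpolation formula at all. Instead it passes to the dual side: since $\im(\fCol_n^\varepsilon)\simeq H^1_\varepsilon(K_{n,w},A)^\vee$ as $\Lambda$-modules, and since the target $\Lambda_\fF/\omega_n$ is $p$-torsion-free, it suffices to show that $p^t\omega_n^\varepsilon$ kills $H^1_\varepsilon(K_{n,w},A)$ for some $t$. This is then read off directly from the corestriction description of $H^1_\varepsilon(K_{n,w},A)$ in Lemma~\ref{lem:even-odd-local-condition}: applying the relevant $\Phi_i(\gamma)$ successively to $z\in H^1_\varepsilon(K_{n,w},A)$ amounts, up to powers of $p$, to iterated corestriction down to level $0$, after which $X=\gamma-1$ annihilates the result. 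The whole computation is two lines.

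Your approach instead works on the Coleman-map side and unwinds the $p$-adic Hodge theory: you evaluate the factorization $\cL_T=\log_{p,k}^+\Col^+ v_1+\log_{p,k}^-\Col^- v_2$ at a character $\theta$ of the ``wrong'' parity, use the interpolation formula to identify $\cL_T(\tilde z)(\theta)$ with a Frobenius-twist of the dual exponential, and then exploit the relation $\varphi^2=-p^{-1}$ to see that the answer lies in the ``wrong'' eigenline. This is essentially how the vanishing is established in \cite{lei10} itself, and it gives the stronger statement that $\Col^\pm$ vanishes at all characters of the opposite parity, not just those of level $\le n$. One small imprecision: $\exp^*(\tilde z_i)$ lands in $K_{i,w}\otimes_{\Q_p}D^0(V)$, not in $D^0(V)$ itself; it is the $\theta$-isotypic projection that lies in $\fF(\theta)\cdot v_1$, on which $\varphi^{-i}$ then acts. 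This does not affect the conclusion.

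In short, the paper's proof is more elementary and self-contained (relying only on local duality and the trace description already established in Lemma~\ref{lem:even-odd-local-condition}), while yours is more conceptual and explains \emph{why} the annihilation holds from the construction of $\Col^\pm$, at the cost of invoking the interpolation formula as an external input.
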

\begin{proof}
	Let $\varepsilon\in \{\pm 1\}$. Since $\fCol_n^\varepsilon$ is valued in $\Lambda_{\fF}/\omega_n\Lambda_{\fF}$, it suffices to show that there exists $t\in \Z_{\ge 0}$ such that $p^t \omega_n^\varepsilon$ annihilates $\im(\fCol_n^\varepsilon)$. Or, by duality, any $z\in H^1_{\varepsilon}(K_{m,v},A)$ is killed by $p^t \omega_n^\varepsilon$. By Lemma \ref{lem:even-odd-local-condition}, we have
	\begin{align*}
		p^{t(n,\varepsilon)}\omega_n^\varepsilon\cdot z = X\cdot \tr_{K_n/K}(z) = 0,
	\end{align*}
	where
    \begin{align*}
        t(n,\varepsilon) = \begin{cases}
            n/2, & \text{if }2\mid n;\\
            (n+1)/2, & \text{if }2\nmid n\text{ and }\varepsilon = +1;\\
            (n-1)/2, & \text{otherwise.}
        \end{cases}
    \end{align*}
\end{proof}

We will thus denote by $\Col_n^\pm$ the composition
\begin{align*}
	\Col_n^\pm: H^1(K_{n,w},T) \xrightarrow{\fCol_n^\pm}\Lambda_{\fF}/\omega_n\Lambda_{\fF} \surj \Lambda_{\fF}/\omega_n^\pm\Lambda_{\fF},
\end{align*}
where we identify $\Lambda_\fF/\omega_n \simeq \Lambda_{\fF}/\omega_n^\pm \oplus \Lambda_\fF/\tilde{\omega}_n^\mp$ by the Chinese remainder theorem and the second map is the natural projection.
\begin{prop}
	We have $\ker(\Col_n^\pm) = \ker(\fCol_n^\pm)$.
\end{prop}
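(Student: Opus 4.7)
The plan is to use the annihilation statement from Corollary~\ref{cor:fCol-Col} together with the Chinese remainder decomposition invoked in the definition of $\Col_n^\pm$. Concretely, one inclusion is trivial: since $\Col_n^\pm = \pi \circ \fCol_n^\pm$ where $\pi \colon \Lambda_\fF/\omega_n \surj \Lambda_\fF/\omega_n^\pm$ is the natural quotient, we have $\ker(\fCol_n^\pm) \subseteq \ker(\Col_n^\pm)$ for free. For the reverse inclusion, it suffices to show that $\pi$ is injective on the image of $\fCol_n^\pm$.

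The key point is that, by Corollary~\ref{cor:fCol-Col}, the image of $\fCol_n^\pm$ is annihilated by $\omega_n^\pm$. So I would analyze the $\omega_n^\pm$-torsion of $\Lambda_\fF/\omega_n\Lambda_\fF$ under the CRT identification
\[
\Lambda_\fF/\omega_n\Lambda_\fF \;\simeq\; \Lambda_\fF/\omega_n^\pm\Lambda_\fF \,\oplus\, \Lambda_\fF/\tilde\omega_n^\mp\Lambda_\fF,
\]
which is valid because $\omega_n = \omega_n^\pm \cdot \tilde\omega_n^\mp$ and the two factors are coprime in $\fF[X]$: the roots of $\Phi_i(X+1)$ for varying $i$ are translates of primitive $p^i$-th roots of unity, which are disjoint for distinct $i$, and $X$ has root $0 \ne \zeta - 1$ for any nontrivial $p$-power root of unity $\zeta$.

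On this decomposition, $\omega_n^\pm$ acts as zero on the first summand and as a unit on the second (the latter because $\omega_n^\pm$ and $\tilde\omega_n^\mp$ are coprime). Hence the $\omega_n^\pm$-torsion of $\Lambda_\fF/\omega_n$ equals the first summand $\Lambda_\fF/\omega_n^\pm\Lambda_\fF \oplus 0$, and $\pi$ restricts to the identity there. Combining this with the fact that $\im(\fCol_n^\pm)$ lies in the $\omega_n^\pm$-torsion, we conclude that $\pi$ is injective on $\im(\fCol_n^\pm)$, giving the desired equality $\ker(\Col_n^\pm) = \ker(\fCol_n^\pm)$.

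There is no real obstacle here; the only mildly delicate check is the coprimality of $\omega_n^\pm$ and $\tilde\omega_n^\mp$ underlying the CRT decomposition, but this is routine given the structure of the $\Phi_i(X+1)$. The proof is essentially a bookkeeping consequence of Corollary~\ref{cor:fCol-Col}.
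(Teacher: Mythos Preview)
Your proof is correct and follows essentially the same approach as the paper: both use Corollary~\ref{cor:fCol-Col} (the $\omega_n^\pm$-annihilation of $\im(\fCol_n^\pm)$) together with the coprimality of $\omega_n^\pm$ and $\tilde\omega_n^\mp$ in $\Lambda_\fF$. The paper writes this out via an explicit B\'ezout relation $F\omega_n^+ + G\tilde\omega_n^- = p^m$ and chases the element through, whereas you phrase it more structurally as ``the $\omega_n^\pm$-torsion of $\Lambda_\fF/\omega_n$ is exactly the first CRT summand, on which $\pi$ is an isomorphism''; the content is the same.
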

\begin{proof}
	We will prove the plus part; the minus part can be handled similarly. By definition, it boils down to establishing the reverse inclusion $\ker(\Col_n^+)\subseteq \ker(\fCol_n^+)$. Applying the Chinese remainder theorem, we have $F,G\in \Z_p[[X]]$ and $m\in \Z_{\ge 0}$ such that $F\omega_n^+ + G\tilde{\omega}_n^- = p^m$; note that this forces $G$ to be prime to $\omega_n^+$ in $\Lambda_{\fF}$. Now, suppose $z\in \ker(\Col_n^+)$. It follows from Corollary \ref{cor:fCol-Col} that
	\begin{align*}
		p^m \cdot \fCol_n^+(z) = G\tilde{\omega}_n^-\cdot\fCol_n^+(z).
	\end{align*}
	So, $G(X)\tilde{\omega}_n^-(X)\fCol_n^+(z) = 0\in \Lambda_{\fF}/\omega_n^+\Lambda_{\fF}$. As $\Lambda_{\fF}$ is a unique factorization domain, this shows $\fCol_n^+(z)\in \omega_n^+\Lambda_{\fF}$, whereby $p^m\fCol_n^+(z) = G\tilde{\omega}_n^-\cdot\fCol_n^+(z) = 0\in \Lambda_{\fF}/\omega_n\Lambda_{\fF}$. That is, $z\in \ker(\fCol_n^+)$.
\end{proof}

\begin{rem}
	If we consider $\Col^\pm$ to be valued in $\fF[[\Gamma]]$, then from our construction we have $\Col^\pm = \plim_n \Col_n^+$. In particular, this equality holds in $\Lambda_{\fF}$; \textit{cf.}, \cite[Definition 8.22]{kobayashi03}.
\end{rem}

\subsubsection{Selmer groups and their control theorems}
\label{subsec:selmer-control}

For $n\in \Z_{\ge 0}\cup\{\infty\}$, we define
\begin{align*}
	\Sel^\pm(K_n,A) = \ker\Bigg(H^1(K_n,A)\to \prod_{w\mid p} \frac{H^1(K_{n,w},A)}{H^1_\pm(K_{n,w},A)}\times \prod_{w\nmid p} \frac{H^1(K_{n,w},A)}{H^1_\bff(K_{n,w},A)}\Bigg).
\end{align*}
As both $\Sel^\pm(K_\infty,A)$ are contained in $\Sel(K_\infty,A)$, they are cofinitely generated $\Lambda$-modules \cite[Proposition 3]{greenberg:representations}.

\begin{rem}
	From Lemma \ref{lem:even-odd-local-condition}, we see that
	\begin{align*}
		\Sel^{\pm}(K,A) = \Sel(K,A).
	\end{align*}
\end{rem}

Next, we consider a special case of the control theorem of Ponsinet \cite[Lemma 2.3]{ponsinet20}:
\begin{prop}\label{prop:pm-control}
	Assume \eqref{ass:tama} holds. For $n\ge 0$, the map
	\begin{align*}
		\Sel^\pm(K_n,A) \to \Sel^\pm(K_\infty,A)^{\omega_n = 0}
	\end{align*}
	is an isomorphism.
\end{prop}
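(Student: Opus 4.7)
The plan is to establish the control theorem by a snake lemma argument on the natural commutative diagram comparing the defining exact sequences of $\Sel^\pm(K_n,A)$ and $\Sel^\pm(K_\infty,A)^{\Gamma_n}$, very much in the spirit of the proof of Proposition~\ref{prop:control}. Concretely, I would set up the diagram with exact rows
\[
\begin{tikzcd}[column sep=small, nodes={inner sep=0.1pt}]
0 \ar[r] & \Sel^\pm(K_n,A) \ar[r] \ar[d,"\Phi_n^\pm"] & H^1(K_{n,\Sigma}/K_n,A) \ar[d,"h"] \ar[r] & \displaystyle\prod_{w\in S_n} H^1_{/\bff}(K_{n,w},A) \times \prod_{w\mid p} H^1_{/\pm}(K_{n,w},A) \ar[d,"g"] \\
0 \ar[r] & \Sel^\pm(K_\infty,A)^{\Gamma_n} \ar[r] & H^1(K_\Sigma/K_\infty,A)^{\Gamma_n} \ar[r] & \left(\displaystyle\prod_{w'\in S_\infty} H^1_{/\bff}(K_{\infty,w'},A) \times \prod_{w'\mid p} H^1_{/\pm}(K_{\infty,w'},A)\right)^{\Gamma_n}
\end{tikzcd}
\]
and deduce the isomorphism from the snake lemma, for which it suffices to show that $h$ is an isomorphism and that $\ker(g) = 0$.

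For $h$, the inflation-restriction sequence reduces the question to the vanishing of $H^i(\Gamma_n, H^0(K_\infty,A))$ for $i=1,2$. Since $H^0(K_\infty,A) \hookrightarrow H^0(K_{\infty,w},A)$ for any place $w\mid p$ of $K_\infty$, Lemma~\ref{lem:lem4.4} forces $H^0(K_\infty,A)=0$, and $h$ is an isomorphism. For $\ker(g) = 0$, I would analyze $g$ place by place. At $w\mid N$, the injectivity of the local component of $g$ is exactly the content of Lemma~\ref{lem:tamagawa} combined with assumption \eqref{ass:tama}.

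The main step — though in our setting it is quite formal — is the local analysis at $w\mid p$. Here the key observation is that by definition we have $H^1_\pm(K_{n,w},A) = H^1_\pm(K_{\infty,w},A)^{\Gamma_n}$, while Lemma~\ref{lem:lem4.4} together with inflation-restriction yields $H^1(K_{n,w},A) \simeq H^1(K_{\infty,w},A)^{\Gamma_n}$. Taking $\Gamma_n$-invariants of the short exact sequence
\[
0 \to H^1_\pm(K_{\infty,w},A) \to H^1(K_{\infty,w},A) \to H^1_{/\pm}(K_{\infty,w},A) \to 0
\]
then produces a canonical injection $H^1_{/\pm}(K_{n,w},A) \hookrightarrow H^1_{/\pm}(K_{\infty,w},A)^{\Gamma_n}$, which settles the local injectivity at primes above $p$. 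The local component at primes above $p$ would normally be the technical heart of such a control theorem, but here it is essentially tautological because the finite-level plus/minus subgroups were themselves defined as $\Gamma_n$-invariants of their infinite-level counterparts. Piecing these ingredients together through the snake lemma delivers both the injectivity and the surjectivity of $\Phi_n^\pm$.
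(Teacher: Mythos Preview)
Your proof is correct and follows essentially the same approach as the paper's: the paper's argument is simply a pointer back to Proposition~\ref{prop:control}, noting only that at $w\mid p$ one must check the injectivity of $H^1_{/\pm}(K_{n,w},A)\to \big(H^1_{/\pm}(K_{\infty,w},A)\big)^{\Gamma_n}$, which you derive exactly as intended from the definition $H^1_\pm(K_{n,w},A)=H^1_\pm(K_{\infty,w},A)^{\Gamma_n}$ together with Lemma~\ref{lem:lem4.4}. One tiny cosmetic point: at $w\mid N$ you should also dispatch the infinitely decomposed case (where the map is tautologically injective), as Lemma~\ref{lem:tamagawa} is stated only for finitely decomposed primes, and your $H^1(K_{n,\Sigma}/K_n,A)$ should read $H^1(K_\Sigma/K_n,A)$.
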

\begin{proof}
	By Lemma \ref{lem:lem4.4} the map is injective. For the surjectivity, the proof is more or less in the same lines as those of Proposition \ref{prop:control}, except that at $w\mid p$, one observes instead
    \begin{align*}
        \frac{H^1(K_{n,w},A)}{H^1_\pm(K_{n,w},A)}\to \left(\frac{H^1(K_{\infty,w},A)}{H^1_\pm(K_{\infty,w},A)}\right)^{\Gamma_n}
    \end{align*}
    is injective (\textit{cf., loc.~cit.}).
\end{proof}

Next, after Iovita--Pollack \cite[Definition 7.3]{iovita-pollack}, for $n\in \Z_{\ge 0}\cup\{\infty\}$ we define
\begin{align*}
	\Sel^1(K_n,A) = \ker\Big(\Sel(K_n,A) \to \prod_{w\mid p} \frac{H^1(K_{n,w},A)}{H^1_\bff(K_w,A)}\Big).
\end{align*}
We first note the following
\begin{lem}\label{lem:sel-one-intersection}
	The intersection $\Sel^+(K_n,A)\cap \Sel^-(K_n,A)$ equals $\Sel^1(K_n,A)$.
\end{lem}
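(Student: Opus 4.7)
The equality is a statement purely about local conditions at places $w \mid p$, as $\Sel^+(K_n,A)$, $\Sel^-(K_n,A)$, and $\Sel^1(K_n,A)$ all agree with $\Sel(K_n,A)$ at every place $w \nmid p$. It thus suffices to show, for each $w \mid p$,
\[
H^1_+(K_{n,w},A)\cap H^1_-(K_{n,w},A) \;=\; \im\!\left(H^1_\bff(K_w,A) \xrightarrow{\res} H^1(K_{n,w},A)\right).
\]

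The inclusion $\supseteq$ is a direct check via Lemma \ref{lem:even-odd-local-condition}: for $x \in H^1_\bff(K_w,A)$, functoriality of the Bloch--Kato condition gives $\res(x) \in H^1_\bff(K_{n,w},A)$, and for each $0 \le m < n$ the identity
\[
\cores_{K_n/K_{m+1}}(\res(x)) \;=\; p^{\,n-m-1}\,\res(x|_{K_{m+1,w}})
\]
plainly exhibits the corestriction as restricted from $H^1(K_{m,w},A)$. Hence $\res(x)$ meets both the even- and odd-$m$ conditions, placing it in the intersection.

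The reverse inclusion is the substantive part. Given $z$ in the intersection, Lemma \ref{lem:even-odd-local-condition} yields $\cores_{K_n/K_{m+1}}(z) \in H^1(K_{m,w},A)$ for every $0 \le m < n$. The plan is to perform a descending induction, using the injectivity of restrictions from Lemma \ref{lem:lem4.4}, to produce a preimage $x \in H^1(K_w,A)$ with $\res(x) = z$; the Bloch--Kato condition on $z$ then forces $x \in H^1_\bff(K_w,A)$. The chief obstacle is the factor of $p$ inherent in $\cores \circ \res = [K_{n,w}:K_{m+1,w}]$, which obstructs a naive layer-by-layer descent (a priori one only obtains $p\cdot y$ restricted from the previous layer, not $y$ itself). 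To circumvent this, I would pass to the Iwasawa-theoretic side, where the $\Lambda$-injectivity of $(\Col^+,\Col^-)\colon H^1(K_{\infty,w},T) \hookrightarrow \Lambda_\fF^{\oplus 2}$ (in the spirit of Theorem \ref{thm:lambda-ranks}.i, combined with the finite-index image of each $\Col^\pm$ noted after Corollary \ref{cor:fCol-Col}) yields $H^1_+(K_{\infty,w},T) \cap H^1_-(K_{\infty,w},T) = 0$. Combined with local Tate duality and the control theorem Proposition \ref{prop:pm-control}, this infinite-level vanishing identifies the intersection at level $n$ as exactly the image of $H^1_\bff(K_w,A)$ under restriction, completing the proof.
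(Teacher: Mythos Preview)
Your reduction to a purely local statement at $w\mid p$ and your handling of the inclusion $\supseteq$ are fine and match the paper. The problem is the hard inclusion. Your Iwasawa detour does not close the argument: from the injectivity of $(\Col^+,\Col^-)$ you correctly obtain $H^1_+(K_{\infty,w},T)\cap H^1_-(K_{\infty,w},T)=0$, but under local Tate duality this yields
\[
H^1_+(K_{\infty,w},A)+H^1_-(K_{\infty,w},A)=H^1(K_{\infty,w},A),
\]
which is a statement about the \emph{sum} on the $A$-side, not the intersection. To control $H^1_+(K_{\infty,w},A)\cap H^1_-(K_{\infty,w},A)$ you would instead need to compute $H^1_+(K_{\infty,w},T)+H^1_-(K_{\infty,w},T)$, and you have said nothing about this sum. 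The invocation of Proposition~\ref{prop:pm-control} does not help either: that is a control theorem for global Selmer groups, not for the local conditions, and in any case it cannot manufacture the missing information about the sum on the $T$-side. So the final sentence of your sketch is an assertion, not an argument.

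The paper's proof is far more elementary and avoids Iwasawa theory entirely. It runs the ``descending induction'' idea that you abandoned, but with the right organizing device: take the \emph{minimal} $t\ge 1$ with $\loc_w(x)\in H^1_\bff(K_{t,w},A)\setminus H^1_\bff(K_{t-1,w},A)$. Then apply the description of Lemma~\ref{lem:even-odd-local-condition} at the single parity $\varepsilon=(-1)^{t-1}$ with $m=t-1$, obtaining $\cores_{K_n/K_t}(\loc_w(x))\in H^1_\bff(K_{t-1,w},A)$. Since $\loc_w(x)$ already lives at level $t$, this corestriction is just $p^{n-t}\loc_w(x)$; finally, divisibility of $H^1_\bff(K_{t-1,w},A)$ forces $\loc_w(x)\in H^1_\bff(K_{t-1,w},A)$, contradicting the minimality of $t$. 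The trick that dissolves your ``factor of $p$'' obstacle is precisely this: choose the level first, then use the parity matching that level, and invoke divisibility to absorb the $p$-power.
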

\begin{proof}
	We have $\Sel^1(K_n,A)\subseteq \Sel^+(K_n,A)\cap \Sel^-(K_n,A)$ by definition. Now suppose $x\in \Sel^+(K_n,A)\cap \Sel^-(K_n,A)$. Then for $w\mid p$, we have $\loc_w(x) \in H^1_+(K_{n,w},A)\cap H^1_-(K_{n,w},A)$. We claim that $\loc_w(x)\in H^1_\bff(K_w,A)$. Otherwise, let $t\in \Z_{\ge 1}$ be smallest such that $\loc_v(x)\in H^1_\bff(K_{t,w},A)\setminus H^1_\bff(K_{t-1,w},A)$. As $\loc_w(x)\in H^1_{(-1)^{t-1}}(K_{n,w},A)$, by Lemma \ref{lem:even-odd-local-condition}, we have
    \begin{align*}
        \cores_{K_n/K_t}(\loc_w(x))\in H^1_\bff(K_{t-1,w},A).
    \end{align*}
    Since 
	\begin{align*}
		\loc_v(x) = \frac{1}{p^{n-t}}\cores_{K_n/K_t}(\loc_w(x)),
	\end{align*}
	we find $p^{n-t}\loc_w(x)\in H^1_\bff(K_{t-1,w},A)$. However, as $H^1_\bff(K_{t-1},A)$ is divisible (see \cite[proof of Theorem 2.9]{greenberg-park-city}), we deduce that $\loc_w(x)\in H^1_\bff(K_{t-1},A)$, which is a contradiction.
\end{proof}

\begin{lem}\label{lem:control-sel-one}
	Assume \eqref{ass:tama} holds. For $n\in \Z_{\ge 0}$, the injection
	\begin{align*}
		\Sel^1(K_n,A) \hookrightarrow \Sel^1(K_\infty,A)^{\Gamma_n}
	\end{align*}
	is an isomorphism.
\end{lem}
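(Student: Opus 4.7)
The plan is to reduce the claim to the control theorem for the plus/minus Selmer groups (Proposition~\ref{prop:pm-control}) by means of the identification $\Sel^1=\Sel^+\cap\Sel^-$ supplied by Lemma~\ref{lem:sel-one-intersection}.

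For injectivity, I would argue that the map factors through $\Sel(K_n,A)\to \Sel(K_\infty,A)^{\Gamma_n}$, which is injective by inflation--restriction together with the vanishing $H^0(K_\infty,A)\subseteq H^0(K_{\infty,w},A)=0$ furnished by Lemma~\ref{lem:lem4.4} for $w\mid p$.

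For surjectivity, let $x\in \Sel^1(K_\infty,A)^{\Gamma_n}$. The first step is to record the inclusion $\Sel^1(K_\infty,A)\subseteq \Sel^\pm(K_\infty,A)$: by the definition of $\Sel^1$, the local component $\loc_w(x)$ at each $w\mid p$ lies in the image of $H^1_\bff(K_w,A)$, and the corestriction characterization of Lemma~\ref{lem:even-odd-local-condition} implies that such base-level classes automatically satisfy both the plus and the minus local conditions over $K_{\infty,w}$ (their corestrictions to intermediate layers merely pick up factors of $p$ and thus stay in $H^1_\bff$). Consequently $x\in \Sel^+(K_\infty,A)^{\Gamma_n}\cap \Sel^-(K_\infty,A)^{\Gamma_n}$, and Proposition~\ref{prop:pm-control}---applicable because $\omega_n$ acts as $\gamma^{p^n}-1$, so the $\omega_n=0$ condition coincides with $\Gamma_n$-invariance---produces preimages $x_n^+\in \Sel^+(K_n,A)$ and $x_n^-\in \Sel^-(K_n,A)$ of $x$.

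Finally, by the injectivity $\Sel(K_n,A)\hookrightarrow \Sel(K_\infty,A)$ (again via Lemma~\ref{lem:lem4.4}), the two preimages must coincide in a single element $x_n\in \Sel^+(K_n,A)\cap\Sel^-(K_n,A)$, which equals $\Sel^1(K_n,A)$ by Lemma~\ref{lem:sel-one-intersection}. Since $x_n$ maps to $x$, surjectivity follows. The only step I expect to require any care is the verification that restrictions from $H^1_\bff(K_w,A)$ lie in $H^1_\pm(K_{\infty,w},A)$, but this is a direct consequence of Lemma~\ref{lem:even-odd-local-condition} and should not pose a real obstacle.
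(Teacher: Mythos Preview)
Your argument is correct, but it differs from the paper's. The paper simply reruns the snake-diagram argument of Proposition~\ref{prop:control}: injectivity comes from inflation--restriction and Lemma~\ref{lem:lem4.4}, and surjectivity is checked place by place, the only novelty at $w\mid p$ being that the local condition $H^1_\bff(K_w,A)$ is \emph{fixed} (independent of the layer), so the local map
\[
\frac{H^1(K_{n,w},A)}{H^1_\bff(K_w,A)}\longrightarrow\frac{H^1(K_{\infty,w},A)^{\Gamma_n}}{H^1_\bff(K_w,A)}
\]
is literally an equality by Lemma~\ref{lem:lem4.4}. The $(\text{Tama.})$ hypothesis disposes of the places $w\mid N$ via Lemma~\ref{lem:tamagawa} exactly as before.

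Your route instead bootstraps from Proposition~\ref{prop:pm-control} and Lemma~\ref{lem:sel-one-intersection}: you first embed $\Sel^1(K_\infty,A)$ into both $\Sel^\pm(K_\infty,A)$, invoke the $\pm$ control theorems to pull a class back to $\Sel^\pm(K_n,A)$, and then use injectivity plus Lemma~\ref{lem:sel-one-intersection} to land in $\Sel^1(K_n,A)$. This is a perfectly valid reduction and has the appeal of reusing results already established, at the cost of the small extra verification that $H^1_\bff(K_w,A)\subseteq H^1_\pm(K_{\infty,w},A)$. For that step you can be more direct than you are: Lemma~\ref{lem:even-odd-local-condition} at $n=0$ gives $H^1_\pm(K_w,A)=H^1_\bff(K_w,A)$ outright (there are no $m<0$), and since $H^1_\pm(K_{n,w},A)=H^1_\pm(K_{\infty,w},A)^{\Gamma_n}$ by definition, the inclusion is immediate. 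The paper's approach is marginally more economical (it does not need Lemma~\ref{lem:sel-one-intersection} or the $\pm$ control theorem as input), while yours highlights the structural relation $\Sel^1=\Sel^+\cap\Sel^-$ and avoids redoing the snake computation.
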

\begin{proof}
	The proof is similar to that of Proposition \ref{prop:control}. In the snake diagram, at a place $w\mid p$ we use the identification 
		\begin{align*}
			\frac{H^1(K_{n,w},A)}{H^1_\bff(K_w,A)} = \frac{H^1(K_{\infty,w},A)^{\Gamma_n}}{H^1_\bff(K_w,A)},
		\end{align*}
		valid by Lemma \ref{lem:lem4.4}, to bound the kernel of the map on the product of local cohomology groups.
\end{proof}

Below, for a $\Lambda$-module $M$ and $n\in \Z_{\ge 0}$, we denote $M^{\omega_n^+ = \omega_n^- = 0}$ simply by $M^{\omega_n^\pm = 0}$.
\begin{cor}\label{cor:corank-sel-one}
	Assume \eqref{ass:tama} and \eqref{ass:sel} hold. We have $\corank(\Sel^1(K_n,A)^{\omega_n^\pm=0})\le 1$.
\end{cor}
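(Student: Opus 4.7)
The plan is to invoke the control theorem (Lemma~\ref{lem:control-sel-one}) together with Pontryagin duality to reduce the corank computation at level $n$ to one at the base field $K$, where hypothesis~\eqref{ass:sel} supplies the required bound.

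First I would observe that $\Sel^1(K_n, A)^{\omega_n^\pm = 0} = \Sel^1(K_\infty, A)^{\omega_n^\pm = 0}$. Indeed, the factorizations $\omega_n = \omega_n^+\,\tilde{\omega}_n^- = \omega_n^-\,\tilde{\omega}_n^+$ imply that any element of $\Sel^1(K_\infty, A)$ killed by both $\omega_n^+$ and $\omega_n^-$ is automatically killed by $\omega_n$, so
\[
\Sel^1(K_\infty, A)^{\omega_n^\pm = 0} \subseteq \Sel^1(K_\infty, A)^{\omega_n = 0} = \Sel^1(K_n, A)
\]
by Lemma~\ref{lem:control-sel-one}, which yields the claimed equality.

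Next, set $M = \Sel^1(K_\infty, A)$, a cofinitely generated $\Lambda$-module by \cite[Proposition~3]{greenberg:representations}, and put $N = M^\vee$. Pontryagin duality identifies $M^{\omega_n^\pm = 0}$ with $\bigl(N/(\omega_n^+, \omega_n^-)N\bigr)^\vee$, whose $\fo$-corank equals $\dim_\fF N_\fF/(\omega_n^+, \omega_n^-) N_\fF$, where $N_\fF := N \otimes_\fo \fF$. I then claim $(\omega_n^+, \omega_n^-) \Lambda_\fF = (X) \Lambda_\fF$, where $X := \gamma - 1$ for a topological generator $\gamma$ of $\Gamma$. Since $\omega_n^\pm = X\,\tilde{\omega}_n^\pm$, this amounts to showing that $\tilde{\omega}_n^+$ and $\tilde{\omega}_n^-$ are coprime in $\Lambda_\fF$; by hypothesis~(vii) the prime $p$ is unramified in $\fF$, so each $\Phi_i(X+1)$ remains irreducible over $\fF$, and the irreducible factors appearing in $\tilde{\omega}_n^+$ and $\tilde{\omega}_n^-$ are disjoint by construction.

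Putting everything together,
\[
\corank_\fo M^{\omega_n^\pm = 0} = \dim_\fF N_\fF/X N_\fF = \rank_\fo N/XN = \corank_\fo M^\Gamma = \corank_\fo \Sel^1(K, A) \leq 1,
\]
where the penultimate equality again invokes Lemma~\ref{lem:control-sel-one} at $n = 0$, and the final inequality follows because $\Sel^1(K, A) \subseteq \Sel(K, A) \simeq \fF/\fo$ by \eqref{ass:sel}. The argument is essentially formal once the key identity $(\omega_n^+, \omega_n^-) \Lambda_\fF = (X) \Lambda_\fF$ is established, which is the pivotal point where hypothesis~(vii) enters; there is no serious obstacle beyond this algebraic reduction.
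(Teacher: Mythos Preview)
Your argument is correct and follows essentially the same line as the paper's: both hinge on the fact that $\gcd(\omega_n^+,\omega_n^-)=X$ after inverting $p$, then apply the control theorem for $\Sel^1$ (Lemma~\ref{lem:control-sel-one}) to descend to $K$ and invoke \eqref{ass:sel}. The paper packages the first step integrally, writing $A\omega_n^+ + B\omega_n^- = p^m X$ in $\Z_p[X]$ and using that $p^m$-torsion does not affect $\fo$-coranks; your passage to $N_\fF$ and the ideal identity $(\omega_n^+,\omega_n^-)\Lambda_\fF=(X)\Lambda_\fF$ accomplishes the same thing more directly. One small correction: hypothesis~(vii) is neither assumed in the corollary nor needed for your argument. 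Coprimality of $\tilde\omega_n^+$ and $\tilde\omega_n^-$ in $\Lambda_\fF$ follows simply from the fact that $\Phi_i(X+1)$ and $\Phi_j(X+1)$ have no common root in $\overline{\Q}_p$ when $i\ne j$; whether each $\Phi_i$ remains irreducible over $\fF$ is irrelevant here.
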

\begin{proof}
	The proof is due to Matar \cite[pp.~140-141]{matar-supersingular}, which we record here for completeness. As $\gcd(\omega_n^+,\omega_n^-) = X$ in $\Q_p[X]$, we have $A(X),B(X)\in \Z_p[X]$ and $m\in \Z_{\ge 0}$ such that $A\omega_n^+ + B\omega_n^- = p^m X$. It follows that
	\begin{align*}
		\corank_\fo(\Sel^1(K_n,A)^{\omega_n^\pm = 0}) \le \corank_\fo(\Sel^1(K_n,A)^{p^m(\gamma-1)=0}) = \corank_\fo(\Sel^1(K_n,A)^\Gamma).
	\end{align*}
	By Lemma \ref{lem:control-sel-one}, we conclude the last quantity is $\corank_\fo(\Sel^1(K,A))$, and thus is no greater than $\corank_\fo(\Sel(K,A))$, which is $\le 1$ by \eqref{ass:sel}.
\end{proof}

\subsection{Nontorsionness of $\cX^\pm_\infty$ via generalized Heegner modules}
\label{sec:appendix-nontorsion}
From now on we will always assume that $p$ is unramified in the local Hecke field $\fF$; this ensures that all the cyclotomic polynomials $\Phi_n(X)$ are irreducible. For $n\in \Z_{\ge 0}\cup \{\infty\}$, denote $\cX^\pm_n = \Sel_\pm(K_n,A)^\vee$. In this section we modify an argument of Matar to prove both $\Lambda$-modules $\cX^\pm_\infty$ are nontorsion. 

Recall we have assumed that $a_p=a_p(f)=0$, $p\nmid 2N\varphi(N)(k-1)!h_K$ and the weight of the modular form to be $k=2r\in 2\cdot \Z_{>0}$. For $n\in \Z_{\ge 0}\cup\{\infty\}$, let $\Psi(K_n)\subseteq H^1_\bff(K_n,T)$ be the image of the $p$-adic Abel--Jacobi map recalled in Definition \ref{def:AJ-image}. Since $\Psi(K_n)$ is an $\fo$-submodule of $H^1(K_n,T)$, we see that $\Psi(K_n)$ is finite free over $\fo$ since $H^1(K_n,T)$ is (using Lemma \ref{lem:lem4.4} again and the inclusion $A^{G_{K_n}}\subseteq A^{G_{K_{n,w}}}$ for $w\mid p$).

Next, let $z_{f,p^n}\in H^1(K[p^n],T)$ be the generalized Heegner classes recalled in \S\ref{sec:heegner}, where $K[p^n]$ is the ring class field of $K$ of conductor $p^n$. Recall we write $\cores_{K[1]/K}(z_{f,1})$ as $z_{f,K}$. For $n\in \Z_{\ge 0}$, denote $\alpha_n = \cores_{K[p^{n+1}]/K_n}(z_{f,p^{n+1}})\in \Psi(K_n)$, which in fact belongs to $\Sel(K_n,T)$ as explained in \S\ref{sec:heegner}. We have the following norm relations (see \cite[Lemma 4.1]{longo-vigni-kyoto} when $N^-=1$ and \cite[Lemma 4.1]{pati:forum-math} when $N^->1$):
\begin{enumerate}
	\item[(H1)] for $n\ge 2$, $\cores_{K_n/K_{n-1}}(\alpha_n) = -p^{k-2}\alpha_{n-2}$;
	\item[(H2)] $\cores_{K_1/K_0}\alpha_1 = p^{r-1}(p-1)/2\cdot\alpha_0$;
	\item[(H3)] $\alpha_0 = -2p^{r-1} z_{f,K}$.
\end{enumerate}
Throughout this section, we will assume the following linear non-incidence condition:
\begin{align}\tag{Inc.}\label{ass:incidence}
	\alpha_0 \ne 0,\ p\alpha_1\ne \frac{p^{r-1}(p-1)}{2}\alpha_0,\ p\alpha_2\ne -p^{k-2}\alpha_0.
\end{align}

In particular, since in our setting $H^1(K,T)[p^\infty] = 0$, $\alpha_0\ne 0$ if and only if $z_{f,K}\ne 0$, which is further equivalent to $z_{f,K}$ being nontorsion.

Define $\cH_n\subseteq \Psi(K_n)$ to be the $\fo$-span of $\alpha_n^\sigma$ for $\sigma$ ranging in $\gal(K_n/K)$. If $M$ is a $\fo$-module write $M_{\fF} = M\otimes_{\fo} \fF$. For $n\in \Z$, put $\varepsilon(n) = (-1)^n$. We prove
\begin{prop}\label{prop:structure-of-Hn}
	Suppose \eqref{ass:incidence} holds. Then the $\fF[\gal(K_n/K)]$-module $\cH_{n,\fF}$ is isomorphic to $\Lambda_{\fF}/(\omega_n^{\varepsilon(n)}(X))$.
\end{prop}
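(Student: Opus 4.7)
The plan is to identify $\cH_{n,\fF}=\Lambda_\fF\alpha_n$ as a cyclic $\Lambda_\fF$-module and pin down $\ann_{\Lambda_\fF}(\alpha_n)$ as exactly $(\omega_n^{\varepsilon(n)}(X))$. This splits into producing a surjection $\phi\colon\Lambda_\fF/(\omega_n^{\varepsilon(n)})\surj \cH_{n,\fF}$, and then upgrading it to an isomorphism by a factor-by-factor nondegeneracy check.

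For the surjection, apply $\tilde\omega_n^{\varepsilon(n)}(X)=\prod_{1\le i\le n,\, i\equiv n\pmod 2}\Phi_i(1+X)$ to $\alpha_n$ in decreasing order of $i$. By the norm relation (H1), each factor $\Phi_i(1+X)$ sends the current $\alpha_i$ to $-p^{k-2}\alpha_{i-2}$, chaining all the way down to $\alpha_0$; for $n$ odd the final step $\Phi_1(1+X)\alpha_1$ invokes (H2), contributing an extra $(p-1)$-factor. Hence $\tilde\omega_n^{\varepsilon(n)}(X)\alpha_n$ is a nonzero $\fF$-scalar multiple of $\alpha_0$. Since $\alpha_0\in H^1(K,T)^{\Gamma}$ we have $X\alpha_0=0$, so $\omega_n^{\varepsilon(n)}(X)\alpha_n=X\tilde\omega_n^{\varepsilon(n)}(X)\alpha_n=0$.

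For injectivity, hypothesis (vii) renders each $\Phi_j(1+X)$ irreducible over $\fF$, so by CRT $\Lambda_\fF/(\omega_n^{\varepsilon(n)})$ decomposes as a product of fields indexed by $S_n=\{0\}\cup\{1\le i\le n:i\equiv n\pmod 2\}$, and injectivity of $\phi$ amounts to the image of $\alpha_n$ being nonzero in each factor. Using the idempotents $e_0\propto \tilde\omega_n(X)$ and $e_i\propto X\cdot\tilde\omega_n(X)/\Phi_i(1+X)$ for $i\ge 1$, together with the partial-norm identity $\prod_{j=i+1}^{n}\Phi_j(1+X)\alpha_n=c_i\alpha_i$ with $c_i\in\fF^{\times}$ (obtained by iterating (H1), where intercalated $\Phi_j(1+X)$ with $j$ exceeding the current index act as multiplication by $p$), one finds that up to a unit the image of $\alpha_n$ in the $i$-th factor equals $\alpha_0$ when $i=0$ and $\omega_{i-1}(X)\alpha_i$ when $i\ge 1$. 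The $i=0$ case follows from $\alpha_0\ne 0$, a consequence of \eqref{ass:incidence}. For $i\in\{1,2\}$, vanishing of $\omega_{i-1}(X)\alpha_i$ would put $\alpha_i$ in $H^1(K_{i-1},T)\otimes\fF$, and a corestriction computation via (H2) or (H1) would force $p\alpha_1=-p^{k-2}(p-1)\alpha_0$ or $p\alpha_2=-p^{k-2}\alpha_0$ respectively, both excluded by \eqref{ass:incidence}.

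The main obstacle I anticipate is extending this nondegeneracy to all $i\ge 3$ of parity $\varepsilon(n)$. A failure $\omega_{i-1}(X)\alpha_i=0$ for such $i$ would mean $\alpha_i$ is fixed by $\Gamma_{i-1}$, and (H1) would then force $\alpha_i=-p^{k-3}\alpha_{i-2}$ in $H^1(K_i,T)\otimes\fF$, collapsing $\cH_{i,\fF}$ onto $\cH_{i-2,\fF}$. One would then aim to cascade this collapse along the chain $\alpha_i\mapsto\alpha_{i-2}\mapsto\cdots$, using the inductive hypothesis $\cH_{i-2,\fF}\simeq \Lambda_\fF/(\omega_{i-2}^{\varepsilon(i-2)})$, until it clashes with one of the base-case inequalities in \eqref{ass:incidence}. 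Executing this cascade so that the accumulated $p$-power coefficients at each level produce a genuinely forbidden relation, rather than a mutually consistent chain of identifications, is the technically delicate step.
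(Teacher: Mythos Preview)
Your approach is sound in outline and closely parallels the paper's, but the paper organizes the argument as an \emph{induction on $n$} rather than a CRT factor-by-factor check for fixed $n$, and this reorganization is exactly what dissolves the obstacle you flag at the end.

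Concretely, for $n=t\ge 2$ the paper lets $F$ generate $\ann_{\Lambda_\fF}(\alpha_t)$ and splits the inductive step into two pieces. Step (1): show $\Phi_t\mid F$. Step (2): once $\Phi_t\mid F$, the relation $\Phi_t(1+X)\,\alpha_t=-p^{k-2}\alpha_{t-2}$ gives $(F/\Phi_t)\cdot\alpha_{t-2}=0$, so by the inductive hypothesis $\ann(\alpha_{t-2})=(\omega_{t-2}^{\varepsilon(t)})$ one gets $\omega_{t-2}^{\varepsilon(t)}\mid F/\Phi_t$, hence $\omega_t^{\varepsilon(t)}\mid F$. Step (2) is the clean replacement for your CRT idempotent check: it delivers all the lower factors $\Phi_i$ with $i<t$ simultaneously from the inductive hypothesis, so you never need to verify nonvanishing in each eigencomponent individually.

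For step (1), the paper's argument is exactly the ``cascade'' you anticipate: if $\Phi_t\nmid F$ then $F\mid\omega_{t-2}^{\varepsilon(t)}\mid\omega_{t-1}$, so $\alpha_t\in H^1(K_{t-1},T)$ and $p\alpha_t=\cores_{K_t/K_{t-1}}(\alpha_t)=-p^{k-2}\alpha_{t-2}$; the paper then takes corestrictions down the tower to land on the forbidden relation $p\alpha_2=-p^{k-2}\alpha_0$ or $p\alpha_1=-p^{k-2}(p-1)\alpha_0$ from \eqref{ass:incidence}. Thus what you call the ``technically delicate step'' is precisely the content of the paper's step (1), executed tersely. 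The key gain of the inductive setup is that you only ever have to rule out the \emph{single} top-level collapse $\alpha_t\propto\alpha_{t-2}$, not a collapse at every intermediate $i$ of the right parity; this is what makes the cascade tractable.
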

\begin{proof}
	By the norm relations (H1) and (H2), we have
		\begin{align*}
			\omega_n^{\varepsilon(n)}(X)\cdot \alpha_n \in \Z_p (\gamma-1)\cdot \alpha_0.
		\end{align*}
		As $(\gamma-1)\alpha_0=0$, we deduce that $\ann(\cH_{n,\fF})\supseteq (\omega_n^{\varepsilon(n)}(X))$ since the $\Lambda_{\fF}$-module $\cH_{n,\fF}$ is generated by $\alpha_n$. We now prove the reverse inclusion. For this, we choose a generator $F$ of $\ann(\cH_{n,\fF})$, which exists as $\Lambda_{\fF}$ is a principal ideal domain.
	
	We shall prove the reverse inclusion by induction. The case $n=0$ follows from the assumption $\alpha_0\ne 0$. For $n=1$, we have $\omega_n^{\varepsilon(n)} = \omega_1(X) = X^p-1$. Suppose $F\in \Lambda_{\fF}$ is a proper divisor of $\omega_1$, then $F = X$ or $F = \Phi_1(X)$. In the former scenario, we find $\gamma \alpha_1 = \alpha_1$, so
	\begin{align*}
		p^{r-1}(p-1)/2\cdot \alpha_0=\cores_{K_1/K}(\alpha_1) = p \alpha_1,
	\end{align*}
	contravening our hypothesis. Next, suppose $\Phi_1(X)\alpha_1=0$, which, by the norm relation, is equivalent to $p^{r-1}(p-1)/2\cdot\alpha_0 = 0$. Again this is ruled out as $\Psi(K)$ is free over $\fo$ and we assumed $\alpha_0\ne 0$. Thus for $n=1$, we must have $F = \omega_1(X)$.
	
	Now suppose we are done with $n<t$ for some $t\in \Z_{\ge 2}$. For $n=t$, first suppose for the sake of contradiction that $\Phi_t\nmid F$. Then we have $\omega_{t-1}(X)\cdot \alpha_t = 0$, namely $\alpha_t \in H^1(K_t,T)^{\Gamma_{t-1}} = H^1(K_{t-1},T)$ by Lemma \ref{lem:lem4.4}. Thus,
	\begin{align*}
		-p^{k-2}\alpha_{t-2} = \cores_{K_t/K_{t-1}}(\alpha_t) = p\alpha_t.
	\end{align*}
	Taking corestrictions on both sides, we find
	\begin{align*}
		\begin{cases}
			p\alpha_2 = -p^{k-2}\alpha_0, & \text{ if }t\equiv 0\bmod 2,\\
			p\alpha_1 = p^{r-1}(p-1)/2\cdot \alpha_0, & \text{ if }t\equiv 1\bmod 2.
		\end{cases}
	\end{align*}
	As both possibilities are excluded from our assumption, we conclude that $\Phi_t$ must divide $F$.
	
	Now, for any $\sigma\in \gal(K_n/K)$, we have
	\begin{align*}
		0=(p^{k-2}F/\Phi_t)(\Phi_t/p^{k-2})(\alpha_n^\sigma) = -(p^{k-2}F/\Phi_t)\alpha_{n-2}^\sigma.
	\end{align*}
	Thus $F/\Phi_t\in \ann(\cH_{t-2,\fF})$, which by inductive hypothesis implies that $\omega_{t-2}^{\varepsilon(t)}\mid F/\Phi_t$. It follows that $\omega_t^{\varepsilon(t)}\mid F$, as desired.
\end{proof}

\begin{cor}\label{cor:nontorsion}
	Assume \eqref{ass:incidence} holds. The $\Lambda$-modules $\cX_\infty^\pm$ are not torsion.
\end{cor}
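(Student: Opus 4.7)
The strategy is to use the generalized Heegner modules $\cH_n$ to produce a divisible subgroup of $\Sel^{\varepsilon(n)}(K_n,A)$ with large $\fo$-corank, and then exploit this growth to rule out $\Lambda$-torsion of $\cX_\infty^\pm$.

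First, I would show that $\cH_n\subseteq \Sel^{\varepsilon(n)}(K_n,T)$. Because the plus/minus local conditions at $p$ are $\Gal(K_n/K)$-stable, it suffices to verify that $\alpha_n$ itself satisfies $\loc_w(\alpha_n)\in H^1_{\varepsilon(n)}(K_{n,w},A)$ at each $w\mid p$. By Lemma~\ref{lem:even-odd-local-condition}, this reduces to checking that $\cores_{K_n/K_{m+1}}(\loc_w(\alpha_n))\in H^1(K_{m,w},A)$ for every $m<n$ with $m\equiv n\pmod 2$. Iterating the norm relation (H1) repeatedly (no invocation of (H2) is needed, since the chain never traverses $K_1\to K_0$), each such corestriction turns out to be a power-of-$p$ multiple of $\loc_w(\alpha_m)$, which is automatically $H^1(K_{m,w},A)$-valued because $\alpha_m\in H^1(K_m,T)$.

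Next, Lemma~\ref{lem:lem4.4} forces $H^0(K_n,A)=0$, hence $H^1(K_n,T)$ and its submodule $\cH_n$ are $\fo$-torsion-free; by Proposition~\ref{prop:structure-of-Hn}, $\cH_n$ is then $\fo$-free of rank $\deg\omega_n^{\varepsilon(n)}$. The image of $\cH_n\otimes_\fo\fF/\fo$ in $H^1(K_n,A)$ is therefore a divisible subgroup of $\fo$-corank exactly $\deg\omega_n^{\varepsilon(n)}$, and since the norm relations persist after division by powers of $p$ and projection from $V$ to $A$, this image lies in $\Sel^{\varepsilon(n)}(K_n,A)$. Composing with the inflation--restriction injection $\Sel^{\varepsilon(n)}(K_n,A)\hookrightarrow \Sel^{\varepsilon(n)}(K_\infty,A)[\omega_n]$ (again via Lemma~\ref{lem:lem4.4}) and dualizing, I obtain
\[
\rank_\fo\bigl(\cX_\infty^{\varepsilon(n)}/\omega_n\cX_\infty^{\varepsilon(n)}\bigr)\;\ge\;\deg\omega_n^{\varepsilon(n)}\xrightarrow{n\to\infty}\infty
\]
as $n$ runs through either parity class.

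The hard step will be extracting the contradiction via the structure theorem: if $\cX_\infty^\varepsilon$ were $\Lambda$-torsion for some $\varepsilon\in\{+,-\}$, there would be a pseudo-isomorphism $\cX_\infty^\varepsilon\sim \bigoplus_i \Lambda/(p^{a_i})\oplus \bigoplus_j \Lambda/(f_j^{b_j})$ with distinguished polynomials $f_j$. I would then show that each summand contributes uniformly bounded $\fo$-rank to the $\omega_n$-cokernel: the $\Lambda/(p^{a_i})$-summands give $\fo$-torsion quotients (since $p$ and $\omega_n$ are coprime in $\Lambda$ by Weierstrass preparation, so multiplication by $\omega_n$ on $\fo/p^{a_i}[[X]]$ is injective, making the cokernel $\fo$-torsion), while the $\Lambda/(f_j^{b_j})$-summands contribute at most $b_j\deg f_j$ of $\fo$-rank, independent of $n$. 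After absorbing the finite pseudo-null discrepancy, this contradicts the unbounded growth above, completing the argument for each of $\cX_\infty^+$ and $\cX_\infty^-$ by running along the appropriate parity. The main subtlety is precisely that the ``$\mu$-type'' summands $\Lambda/(p^{a_i})$ do not generate $\fo$-rank in the $\omega_n$-cokernels; this is exactly what the coprimality of $p$ and $\omega_n$ in $\Lambda$ guarantees.
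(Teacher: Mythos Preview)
Your argument is correct and follows the same overall strategy as the paper: embed $\cH_n\otimes_{\fo}\fF/\fo$ into $\Sel^{\varepsilon(n)}(K_n,A)$ via the norm relations, invoke Proposition~\ref{prop:structure-of-Hn} to identify it with $(\fF/\fo)[X]/(\omega_n^{\varepsilon(n)})$, and use the resulting unbounded growth to contradict $\Lambda$-torsion.

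The only notable difference is in how you extract the contradiction. You pass through the full structure theorem for finitely generated $\Lambda$-modules and bound the $\fo$-rank of each $\omega_n$-cokernel summand-by-summand. The paper instead observes directly that $\cX_\infty^\varepsilon$ surjects onto $(\cH_n\otimes\fF/\fo)^\vee\simeq\fo[X]/(\omega_n^{\varepsilon(n)})$, so if $\cX_\infty^\varepsilon$ were torsion it would admit a single nonzero annihilator $F\in\Lambda$, which would then have to annihilate every $\fo[X]/(\omega_n^{\varepsilon(n)})$; this forces $\omega_n^{\varepsilon(n)}\mid F$ in $\Lambda_\fF$ for all $n$ of the correct parity, an immediate contradiction since $\deg\omega_n^{\varepsilon(n)}\to\infty$. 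Your route is perfectly valid but unnecessarily heavy: the structure theorem, the pseudo-null bookkeeping, and the $\mu$-summand analysis can all be replaced by that one-line annihilator argument.
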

\begin{proof}
	For $n\in \Z_{\ge 0}$, the norm relations imply that $\cH_n\otimes_{\fo} \fF/\fo \subseteq \Sel^{(-1)^n}(K_n,A)$. By Proposition \ref{prop:structure-of-Hn} we have an identification $\cH_n\otimes_{\fo} \fF/\fo\simeq (\fF/\fo)[X]/(\omega_n^{\varepsilon(n)})$. Thus $\cX_\infty^+$ admits a surjective map to $\fo[X]/(\omega_{2n}^{+})$. Now, if $\cX_\infty^+$ is $\Lambda$-torsion, then there exists $F\in \fo[[X]]$ such that $F$ annihilates $(\cH_{2n}\otimes \fF/\fo)^\vee$ for all $n\in \Z_{\ge 0}$. This forces $\omega_{2n}^+\mid F$ in $\Lambda_{\fF}$ for all such $n$, which is impossible. For the same reason, $\cX_\infty^-$ is also nontorsion.
\end{proof}

Now, for $n\in \Z_{\ge 0}$ we put $\cH(K_n)=\sum_{0\le m\le n}\cH_m$, $H(K_n) = \cH(K_n)\otimes_\fo \fF/\fo$, $H(K_n)^{\varepsilon(n)} = \cH_n\otimes_\fo \fF/\fo$ and $H(K_n)^{-\varepsilon(n)} = \cH_{n-1}\otimes_\fo \fF/\fo$. By the norm relations, we have
\begin{align}\label{eq:heegner-decompose}
	H(K_n) = H(K_n)^+ + H(K_n)^-.
\end{align}

\begin{prop}\label{prop:lower-bound}
	Assume \eqref{ass:tama}, \eqref{ass:sel} and \eqref{ass:incidence} hold. For $n\in \Z_{\ge 0}$, we have $\rank_{\fo}\cH(K_n)\ge p^n$ and $\rank_{\fo}\Psi(K_n)\ge p^n$.
\end{prop}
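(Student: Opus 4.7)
The plan is to compute $\dim_\fF \cH(K_n)_\fF$, where $\cH(K_n)_\fF = \cH(K_n)\otimes_\fo \fF$; since $\cH(K_n)$ sits inside the $\fo$-torsion-free module $H^1(K_n,T)$ (torsion-freeness being a consequence of Lemma~\ref{lem:lem4.4}), this $\fF$-dimension equals $\rank_\fo \cH(K_n)$. The decomposition $\cH(K_n)_\fF = \cH_{n,\fF} + \cH_{n-1,\fF}$ from \eqref{eq:heegner-decompose}, together with inclusion--exclusion, reduces the task to computing $\dim_\fF \cH_{n,\fF}$, $\dim_\fF \cH_{n-1,\fF}$, and $\dim_\fF(\cH_{n,\fF}\cap \cH_{n-1,\fF})$.

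By Proposition~\ref{prop:structure-of-Hn}, $\dim_\fF \cH_{m,\fF} = \deg \omega_m^{\varepsilon(m)}$. A direct telescoping using $\omega_m^\pm = X\tilde\omega_m^\pm$ and $\tilde\omega_m^+\tilde\omega_m^- = \omega_m/X$ gives $\omega_n^{\varepsilon(n)}\cdot \omega_{n-1}^{\varepsilon(n-1)} = X\cdot \omega_n(X) = X\bigl((X+1)^{p^n}-1\bigr)$, whence $\dim_\fF \cH_{n,\fF}+\dim_\fF \cH_{n-1,\fF} = p^n+1$. The key step is to show that $\cH_{n,\fF}\cap \cH_{n-1,\fF} = \cH_{0,\fF}$, which is one-dimensional thanks to $\alpha_0\ne 0$ by \eqref{ass:incidence}. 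For the inclusion ``$\subseteq$'', any element of $\cH_{n-1,\fF}$ lies in $H^1(K_{n-1},T)\otimes_\fo \fF$ and is thus annihilated by $\omega_{n-1}$; inside $\cH_{n,\fF}\simeq \Lambda_\fF/\omega_n^{\varepsilon(n)}$, the $\omega_{n-1}$-annihilated submodule is generated by $\omega_n^{\varepsilon(n)}/\gcd(\omega_n^{\varepsilon(n)},\omega_{n-1}) = \Phi_n(X+1)$, and by (H1) we have $\Phi_n(X+1)\cdot \alpha_n = -p^{k-2}\alpha_{n-2}$, so this submodule coincides with $\cH_{n-2,\fF}$. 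Therefore $\cH_{n,\fF}\cap \cH_{n-1,\fF} = \cH_{n-2,\fF}\cap \cH_{n-1,\fF}$, and iterating on the pair $(n-1,n-2)$ (then $(n-2,n-3)$, etc.) brings the intersection down to $\cH_{1,\fF}\cap \cH_{0,\fF} = \cH_{0,\fF}$, the last equality holding because (H2) places $\alpha_0$ inside $\cH_{1,\fF}$. The reverse containment $\cH_{0,\fF}\subseteq \cH_{n,\fF}\cap \cH_{n-1,\fF}$ follows by iterating (H1)--(H2) to deduce $\cH_{0,\fF}\subseteq \cH_{m,\fF}$ for every $m\ge 0$.

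Combining the three inputs gives $\dim_\fF \cH(K_n)_\fF = (p^n+1)-1 = p^n$, hence $\rank_\fo \cH(K_n) = p^n$; the inequality $\rank_\fo \Psi(K_n)\ge p^n$ is then immediate from the inclusion $\cH(K_n)\subseteq \Psi(K_n)$. The main subtlety lies in the intersection calculation: the superficially plausible equality $\cH_{n,\fF}\cap \cH_{n-1,\fF} = \cH_{n-2,\fF}$ (suggested by $\cH_{n-2,\fF}\subseteq \cH_{n,\fF}$) is typically false because $\cH_{n-2,\fF}$ need not sit in $\cH_{n-1,\fF}$; the iterated descent is essential to pin the intersection down to the one-dimensional $\cH_{0,\fF}$, which is precisely what makes the final count land exactly on $p^n$.
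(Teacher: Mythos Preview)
Your argument is correct and takes a genuinely different route from the paper's. Both proofs start with the inclusion--exclusion
\[
\dim_\fF \cH(K_n)_\fF = \dim_\fF \cH_{n,\fF} + \dim_\fF \cH_{n-1,\fF} - \dim_\fF(\cH_{n,\fF}\cap \cH_{n-1,\fF})
\]
and use Proposition~\ref{prop:structure-of-Hn} to evaluate the first two terms. The difference lies entirely in how the intersection is handled. The paper embeds the intersection (after tensoring with $\fF/\fo$) into $\Sel^1(K_n,A)^{\omega_n^\pm=0}$ and then invokes Corollary~\ref{cor:corank-sel-one} to bound its corank by~$1$; this is where the hypotheses \eqref{ass:tama} and \eqref{ass:sel} enter. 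You instead compute the intersection directly from the $\Lambda_\fF$-module structure of $\cH_{n,\fF}\simeq\Lambda_\fF/\omega_n^{\varepsilon(n)}$: the $\omega_{n-1}$-torsion there is exactly $\Phi_n(\gamma)\cdot\cH_{n,\fF}=\cH_{n-2,\fF}$ by (H1), and the descending chain $\cH_{n,\fF}\cap\cH_{n-1,\fF}\subseteq\cH_{n-1,\fF}\cap\cH_{n-2,\fF}\subseteq\cdots$ terminates at $\cH_{0,\fF}$ by (H2).

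Your approach is more elementary---it uses only \eqref{ass:incidence} and the norm relations, so in fact it proves the proposition without \eqref{ass:tama} or \eqref{ass:sel}---and it yields the exact equality $\rank_\fo\cH(K_n)=p^n$ rather than just the lower bound. The paper's approach, on the other hand, sets up and exercises the bound $\corank_\fo(\Sel^1(K_n,A)^{\omega_n^\pm=0})\le1$, a piece of Selmer-theoretic input that is reused verbatim in the proof of Theorem~\ref{thm:B}.(II) (the upper bound on $\corank_\fo\Sel(K_n,A)$). One minor remark: the decomposition you cite as \eqref{eq:heegner-decompose} is stated in the paper for $\fF/\fo$-tensors, but the $\fF$-tensor version you need follows from the same norm-relation argument.
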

\begin{proof}
	The argument is due to Matar \cite[proof of Theorem 4.2]{matar-supersingular}. By \eqref{eq:heegner-decompose}, we have
	\begin{align*}
		\begin{split}
			\rank_{\fo}(\cH(K_n))&= \corank_{\fo}(H(K_n))\\
			&= \corank_{\fo}(H(K_n)^+) + \corank_{\fo}(H(K_n)^-) - \corank_{\fo}(H(K_n)^+\cap H(K_n)^-)\\
			&= p^n+1 - \corank_{\fo}(H(K_n)^+\cap H(K_n)^-),
		\end{split}
	\end{align*}
	where the last equality follows from Proposition \ref{prop:structure-of-Hn} and the identity $\deg(\omega_n^+) + \deg(\omega_n^-) = p^n+1$. As $H(K_n)^\varepsilon \subseteq \Sel^\varepsilon(K_n,A)^{\omega_n^\varepsilon=0}$ for $\varepsilon\in \{\pm 1\}$, we have the inclusion
	\begin{align*}
		H(K_n)^+\cap H(K_n)^- \subseteq \Sel^1(K_n,A)^{\omega_n^\pm = 0}.
	\end{align*}
	As such, by Corollary \ref{cor:corank-sel-one}, we conclude that
	\begin{align*}
		\rank_\fo(\cH(K_n)) \ge p^n + 1 - 1 = p^n.
	\end{align*}
\end{proof}

\subsection{Proof of Theorem \ref{thm:B}}
\label{sec:appendix-proof-main}

\begin{prop}\label{prop:pm-free}
	Assume \eqref{ass:tama}, \eqref{ass:sel} and \eqref{ass:incidence} hold. The $\Lambda$-modules $\cX_\infty^\pm$ are free of rank 1.
\end{prop}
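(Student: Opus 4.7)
The plan is to deduce Proposition~\ref{prop:pm-free} in two short steps: realize $\cX_\infty^\pm$ as a cyclic $\Lambda$-module via Nakayama's lemma, then rule out any nontrivial annihilator using the nontorsionness established in Corollary~\ref{cor:nontorsion}. Recall that $\Lambda = \fo[[\Gamma]]$ is Noetherian local with maximal ideal $\mathfrak m = (\varpi, \gamma - 1)$, where $\gamma$ is a topological generator of $\Gamma$, and $\cX_\infty^\pm$ is finitely generated over $\Lambda$ because $\Sel^\pm(K_\infty, A)$ is cofinitely generated.

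For the first step, I would compute $\cX_\infty^\pm/\mathfrak m \cX_\infty^\pm$. Observing that $\omega_0 = X = \gamma - 1$, the exact control theorem Proposition~\ref{prop:pm-control} applied at $n = 0$ yields $\Sel^\pm(K, A) \simeq \Sel^\pm(K_\infty, A)^{\gamma - 1}$, and Pontryagin duality then gives $\cX_\infty^\pm/(\gamma - 1)\cX_\infty^\pm \simeq \Sel^\pm(K, A)^\vee$. The remark following Lemma~\ref{lem:even-odd-local-condition} identifies $\Sel^\pm(K, A)$ with $\Sel(K, A)$, which by \eqref{ass:sel} is isomorphic to $\fF/\fo$. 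Reducing further modulo $\varpi$, we obtain
\[
\cX_\infty^\pm/\mathfrak m \cX_\infty^\pm \simeq \fo/\varpi,
\]
which is one-dimensional over the residue field.

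By Nakayama's lemma, $\cX_\infty^\pm$ is cyclic over $\Lambda$, so we have a presentation $\cX_\infty^\pm \simeq \Lambda/I^\pm$ for some ideal $I^\pm \subseteq \Lambda$. If $I^\pm$ were nonzero, then $\cX_\infty^\pm$ would be $\Lambda$-torsion, contradicting Corollary~\ref{cor:nontorsion} (whose hypotheses \eqref{ass:tama}, \eqref{ass:sel}, and \eqref{ass:incidence} are in force). Hence $I^\pm = 0$ and $\cX_\infty^\pm \simeq \Lambda$ is free of rank one. The only substantive input is the nontorsionness, which was already secured in \S\ref{sec:appendix-nontorsion} through the careful analysis of the generalized Heegner modules $\cH_n$; once that result is in hand, the present proposition encounters no further obstacle.
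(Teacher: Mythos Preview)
Your proposal is correct and follows essentially the same approach as the paper: apply the control theorem at level $n=0$ together with \eqref{ass:sel} to see that $\cX_\infty^\pm/(\gamma-1)\cX_\infty^\pm \simeq \fo$, invoke Nakayama to obtain cyclicity, and then use Corollary~\ref{cor:nontorsion} to rule out torsion. The only cosmetic difference is that you reduce all the way modulo $\mathfrak m$ rather than just modulo $(\gamma-1)$, which is harmless.
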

\begin{proof}
	By Proposition \ref{prop:pm-control}, we find
	\begin{align*}
        \cX_\infty^\pm/(\gamma-1)\cX_\infty^\pm\simeq \Sel^\pm(K,A)^\vee \simeq \fo,
	\end{align*}
	where the last identity is by assumption \eqref{ass:sel}. As recalled in \S\ref{subsec:selmer-control}, both $\cX_\infty^\pm$ are finitely generated $\Lambda$-modules. Thus, $\cX_\infty^\pm$ are cyclic $\Lambda$-modules by Nakayama's lemma. Corollary \ref{cor:nontorsion} then implies that they are free.
\end{proof}

Next, we record the following analogue to \cite[Proposition 10.1]{kobayashi03}.
\begin{lem}\label{lem:kobayashi-surjection}
	Let $n\in \Z_{\ge 0}$. Let $j$ be the map
	\begin{align*}
		j: \Sel^+(K_n,A)^{\omega_n^+=0}\oplus \Sel^-(K_n,A)^{\omega_n^-=0} \to \Sel(K_n,A)
	\end{align*}
	induced from the inclusions $\Sel^{\pm}(K_n,A)\to \Sel(K_n,A)$, sending $(P_1,P_2)$ to $P_1+P_2$. Then $\im(j)\supseteq \Sel(K_n,A)_{\rm div}$.
\end{lem}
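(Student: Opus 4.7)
The plan is to produce an explicit decomposition $P = P_+ + P_-$ for any divisible $P \in \Sel(K_n, A)$, with $P_\pm \in \Sel^\pm(K_n, A)^{\omega_n^\pm = 0}$. Two elementary facts underpin the argument: $\omega_n$ annihilates $\Sel(K_n, A)$, because $\gamma^{p^n}$ acts trivially on $H^1(K_n, A)$; and $\omega_n = \omega_n^\pm \tilde{\omega}_n^\mp$.

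The key technical step will be the inclusion
\[
(\star) \qquad \tilde{\omega}_n^\pm \cdot H^1_\bff(K_{n,w}, A) \subseteq H^1_\mp(K_{n,w}, A), \qquad w \mid p.
\]
To verify the plus case of $(\star)$, I fix $y \in H^1_\bff(K_{n,w}, A)$; by Lemma~\ref{lem:even-odd-local-condition} it suffices to check $\cores_{K_n/K_{j+1}}(\tilde{\omega}_n^+ y) \in H^1(K_{j,w}, A)$ for every odd $j$ with $0 \le j < n$. Lemma~\ref{lem:lem4.4} guarantees the injectivity of $H^1(K_{j+1,w}, A) \hookrightarrow H^1(K_{n,w}, A)$; its composition with $\cores_{K_n/K_{j+1}}$ is multiplication by the polynomial $\omega_n/\omega_{j+1} \in \fo[X]$, and descent to $H^1(K_{j,w}, A) \subseteq H^1(K_{n,w}, A)$ is annihilation by $\omega_j$. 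Thus $(\star)$ reduces to the polynomial divisibility $\omega_n \mid \omega_j \cdot (\omega_n/\omega_{j+1}) \cdot \tilde{\omega}_n^+$ in $\fo[X]$, which after cancellation becomes $\Phi_{j+1}(X+1) \mid \tilde{\omega}_n^+$; this holds by the definition of $\tilde{\omega}_n^+$ since $j+1$ is even and $1 \le j+1 \le n$. The minus case is symmetric.

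With $(\star)$ in hand, observe that $\tilde{\omega}_n^+$ and $\tilde{\omega}_n^-$ have disjoint root sets in $\overline{\Q}_p$, hence are coprime in $\Q_p[X]$; this gives $A, B \in \fo[X]$ and $m \ge 0$ with $A \tilde{\omega}_n^+ + B \tilde{\omega}_n^- = p^m$. By divisibility of $P$, pick $Q \in \Sel(K_n, A)$ satisfying $p^m Q = P$, and set $P_- := A \tilde{\omega}_n^+ Q$, $P_+ := B \tilde{\omega}_n^- Q$, so $P = P_+ + P_-$. The annihilations $\omega_n^\pm P_\pm = 0$ come from $\omega_n^\pm \tilde{\omega}_n^\mp = \omega_n$ together with $\omega_n \cdot \Sel(K_n, A) = 0$; the $H^1_\bff$-conditions at $w \nmid p$ are preserved because $H^1_\bff(K_{n,w}, A)$ is a $\Lambda$-submodule; and $(\star)$ puts $\loc_w(P_\pm) \in H^1_\pm(K_{n,w}, A)$ at $w \mid p$. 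Therefore $P_\pm \in \Sel^\pm(K_n, A)^{\omega_n^\pm = 0}$ and $P = j(P_+, P_-) \in \im(j)$.

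The main obstacle is the verification of $(\star)$: it is ultimately a polynomial identity modulo $\omega_n$, but requires carefully matching the parity description of $H^1_\mp(K_{n,w}, A)$ from Lemma~\ref{lem:even-odd-local-condition} with the action of $\fo[X]$ via the identification $\cores_{K_n/K_{j+1}} \leftrightarrow \omega_n/\omega_{j+1}$. Once $(\star)$ is secured, the remainder is a clean Bezout extraction powered by the divisibility of $P$.
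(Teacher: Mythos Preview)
Your proof is correct and follows essentially the same approach as the paper's: both use a B\'ezout relation combined with the divisibility of $P$ to split $P$ into a sum of two pieces lying in the respective $\omega_n^\pm$-torsion of $\Sel^\pm(K_n,A)$. The only cosmetic difference is that the paper uses the pair $(\omega_n^+,\tilde{\omega}_n^-)$ in the B\'ezout identity while you use $(\tilde{\omega}_n^+,\tilde{\omega}_n^-)$; your explicit verification of $(\star)$ via Lemma~\ref{lem:even-odd-local-condition} spells out what the paper leaves implicit by citing Kobayashi.
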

\begin{proof}
	The proof in \textit{loc.~cit.}~can be carried verbatim to our situation: Let $F,G\in \Z_p[[X]]$ and $m\in \Z_{\ge 0}$ be such that $F\omega_n^+ + G\tilde{\omega}_n^- = p^m$. If $P\in \Sel(K_n,A)_{\rm div}$, choose $Q\in \Sel(K_n,A)$ such that $p^m Q = P$. Then $P^+ = G\tilde{\omega}_n^-\cdot Q\in \Sel^+(K,A_n)^{\omega_n^+ = 0}$ and $P^- = F\omega_n^+\cdot Q\in \Sel^-(K,A_n)^{\omega_n^- = 0}$ are such that $P^+ + P^- = P$.
\end{proof}

\begin{proof}[Proof of Theorem \ref{thm:B}, part 1]
	We have proved (I) in Proposition \ref{prop:pm-free}. For (II), note that we have already the lower bound from Proposition \ref{prop:lower-bound}. To establish the upper bound, we recycle the following computation of Matar \cite{matar-supersingular}:
	\begin{align*}
		\begin{split}
			\corank_\fo(\Sel(K_n,A)) & = \corank_\fo(\Sel^+(K_n,A)^{\omega_n^+ = 0}) + \corank_\fo(\Sel^-(K_n,A)^{\omega_n^- = 0})\\
			&\quad - \corank_\fo(\Sel^+(K_n,A)^{\omega_n^+ = 0}\cap \Sel^-(K_n,A)^{\omega_n^- = 0})\\
			&= \rank_\fo(\cX_\infty^+/\omega_n^+\cX_\infty^+) + \rank_\fo(\cX_\infty^-/\omega_n^-\cX_\infty^-) - \corank_\fo(\Sel^1(K_n,A)^{\omega_n^\pm = 0})\\
			&= \rank_\fo(\Lambda/\omega_n^+) + \rank_\fo(\Lambda/\omega_n^-) - 1\\
			& = p^n.
		\end{split}
	\end{align*}
	Here, the first equality uses Lemma \ref{lem:kobayashi-surjection}, the second uses Lemma \ref{lem:sel-one-intersection}, the third uses Proposition \ref{prop:pm-free} and the inclusion $\fF/\fo\simeq \Sel(K,A)\hookrightarrow \Sel^1(K_n,A)^{\omega_n^\pm = 0}$.
	
	Concerning (III): note that Proposition \ref{prop:lower-bound} already shows that $\corank_\fo(\Psi(K_n)\otimes_\fo \fF/\fo)\ge p^n$. As $\Psi(K_n)\otimes_\fo \fF/\fo\subseteq \Sel(K_n, A)$, their divisible parts must coincide, whereby $\Sha_{\AJ}(K_n,A) = \Sha_{\BK}(K_n,A)$. The finiteness of $\Sha_\AJ(K_n,A)$ then follows from that of $\Sha_\BK(K_n,A)$.
\end{proof}

\begin{proof}[Proof of Theorem \ref{thm:B}, part 2]
	We now prove (IV) and (V) with the aid of results from \S\ref{sec:bdp}. First note that (V) is immediate from Corollary \ref{cor:bdp-sha-trivial}.(2) and (III). Concerning (IV): from Remark \ref{rem:matar3.2}.(ii) we have $\Sel(K_\infty,A)^\Gamma\subseteq \Sel^{\emptyset,\emptyset}(K,A) \simeq (\fF/\fo)^{\oplus 2}$. Hence, by Nakayama's lemma we have $\cX_\infty\simeq \Lambda^2/\mathfrak{a}$ for some $\Lambda$-submodule $\mathfrak{a}\subseteq \Lambda^2$. Now, dualizing the exact sequence
	\begin{align*}
		0\to \ilim_n \Sel^1(K_n,A)^{\omega_n^\pm = 0}\to \ilim_n\Sel^+(K_n,A)^{\omega_n^+ = 0}\oplus\ilim_n\Sel^-(K_n,A)^{\omega_n^-=0}\to \Sel(K_\infty,A),
	\end{align*}
	we find a map $\cX_\infty\to \Lambda^{\oplus 2}$ with cokernel isomorphic to a certain projective limit of $\fo$-modules of ranks bounded by 1 by Corollary \ref{cor:corank-sel-one}. This forces $\cX_\infty$ to have $\Lambda$-rank at least 2 and thus $\mathfrak{a} = 0$ (otherwise $\cX_\infty$ has a nontrivial annihilator), which gives (IV).
\end{proof}


\printbibliography
\end{document}